\documentclass[12pt,reqno]{amsart}
\textwidth=15cm \textheight=23cm
\oddsidemargin=0.5cm \evensidemargin=0.5cm
\topmargin=0cm

% \documentclass[11pt,reqno]{amsart}
% \textwidth=12.5truecm \textheight=18truecm
% \oddsidemargin=0truecm \evensidemargin=0truecm
% \topmargin=0cm

%\usepackage[notref,notcite]{showkeys}

\usepackage{amsmath} 

\usepackage{amsthm}
\usepackage{amssymb}
\usepackage{graphics}
\usepackage{latexsym}

\numberwithin{equation}{section}
\newtheorem{thm}{Theorem}[section]
\newtheorem{prop}[thm]{Proposition}
\newtheorem{lem}[thm]{Lemma}
\newtheorem{cor}[thm]{Corollary}

\theoremstyle{remark}
\newtheorem{rem}{Remark}[section]
\newtheorem{defn}{Definition}
 
\newcommand{\laplacian}{\Delta}
\newcommand{\BBB}{\mathbb}
\newcommand{\R}{{\BBB R}}
\newcommand{\Z}{{\BBB Z}}

\newcommand{\C}{{\BBB C}}
\newcommand{\LR}[1]{{\langle {#1} \rangle }}
\newcommand{\lec}{{\ \lesssim \ }}
\newcommand{\gec}{{\ \gtrsim \ }}

\newcommand{\cross}{\times}

\newcommand{\e}{\varepsilon}
\newcommand{\ta}{\tau}

\newcommand{\p}{\partial}
\newcommand{\la}{\lambda}

\renewcommand{\th}{\theta}
\newcommand{\de}{\delta}
\newcommand{\om}{\omega}

\newcommand{\na}{\nabla}

\newcommand{\supp}{\operatorname{supp}}

\newcommand{\I}{\infty}

\newcommand{\EQ}[1]{\begin{equation} \begin{split} #1
 \end{split} \end{equation}}
\newcommand{\EQS}[1]{\begin{align} #1 \end{align}}
\newcommand{\EQQS}[1]{\begin{align*} #1 \end{align*}}
\newcommand{\EQQ}[1]{\begin{equation*} \begin{split} #1
 \end{split} \end{equation*}}

\newcommand{\ol}{\overline}
\newcommand{\ds}{\displaystyle}
\newcommand{\sub}{\subset}

\newcommand{\F}{\mathcal{F}}
\newcommand{\1}{{\mathbf 1}}

\newcommand{\ha}{\widehat}
\newcommand{\til}{\tilde}

%%%%%%%%%%%%%%%%%%%%%%%%%%%%%%%%%%%%%%%%%%%%%%%%%%%%%%%%%
%%%%%%%%%%%%%%%%%%%%%%%%%%%%%%%%%%%%%%%%%%%%%%%%%%%%%%%%%

\title[Well-posedness of the Klein-Gordon-Zakharov system]{Well-posedness\\
for the Cauchy problem of the Klein-Gordon-Zakharov system \\
in four and more spatial dimensions
}

\author[I.  Kato]{Isao Kato}
%\author[K. Tsugawa]{Kotaro Tsugawa}
%\address[K. Tsugawa]{Graduate School of Mathematics, Nagoya University,
%Chikusa-ku, Nagoya, 464-8602, Japan}
\address[Isao Kato]{Graduate School of Mathematics, Nagoya University,
Chikusa-ku, Nagoya, 464-8602, Japan}
%\email[K. Tsugawa]{tsugawa@math.nagoya-u.ac.jp}
\email[Isao Kato]{kato.isao@f.mbox.nagoya-u.ac.jp}

\subjclass[2010]{35Q55, 35B40, 35A01, 35A02}
\keywords{scattering, well-posedness, Cauchy problem, low regularity, bilinear estimate, radial Strichartz estimate, $U^2, V^2$ type Bourgain spaces}

\begin{document}

\begin{abstract}
We study the Cauchy problem for the Klein-Gordon-Zakharov system in spatial dimension $d \ge 4$ with 
radial or non-radial initial datum 
$(u, \p_t u, n, \p_t n)|_{t=0}$ \\
$\in H^{s+1}(\R^d) \cross H^s(\R^d) \cross \dot{H}^s(\R^d) \cross \dot{H}^{s-1}(\R^d)$.  
The critical value of $s$ is $s=s_c=d/2-2$. 
If the initial datum is radial, then we prove the small data global well-posedness and scattering at the critical space 
in $d \ge 4$ by applying the radial Strichartz estimates and $U^2, V^2$ type spaces. 
On the other hand, if the initial datum is non-radial, 
then we prove the local well-posedness at $s=1/4$ when $d=4$ and $s=s_c+1/(d+1)$ when $d \ge 5$ 
by applying the $U^2, V^2$ type spaces.  
\end{abstract}
\maketitle
\setcounter{page}{001}

%%%%%%%%%%%%%%%%%%%%%%%%%%%%%%%%%%%%%%%%%%%%%%%%%%%%%%%%%%%%%%%%%%%%%%%%%%%%%%%%%
%%%%%%%%%%%%%%%%%%%%%%%%%%%%%%%%%%%%%%%%%%%%%%%%%%%%%%%%%%%%%%%%%%%%%%%%%%%%%%%%%
%%%%%%%%%%%%%%%%%%%%%%%%%%%%%%%%%  Section 1  %%%%%%%%%%%%%%%%%%%%%%%%%%%%%%%%%%%%%
%%%%%%%%%%%%%%%%%%%%%%%%%%%%%%%%%%%%%%%%%%%%%%%%%%%%%%%%%%%%%%%%%%%%%%%%%%%%%%%%%
%%%%%%%%%%%%%%%%%%%%%%%%%%%%%%%%%%%%%%%%%%%%%%%%%%%%%%%%%%%%%%%%%%%%%%%%%%%%%%%%%

\section{Introduction}
We consider the Cauchy problem of the Klein-Gordon-Zakharov system: 
\EQS{
 \begin{cases}
  (\p_t^2  - \laplacian  + 1)u = -nu, \qquad (t,x) \in [-T,T] \cross \R^d, \\
  (\p_t^2 - c^2 \laplacian )n = \laplacian |u|^2, \qquad (t,x) \in [-T,T] \cross \R^d, \\
  (u, \p_t u, n, \p_t n)|_{t=0} = (u_0, u_1, n_0, n_1) \\
   \qquad  \qquad  \qquad  \qquad \in H^{s+1}(\R^d) \cross H^s(\R^d) \cross \dot{H}^s(\R^d) \cross \dot{H}^{s-1}(\R^d), 
                                                                                                                                            \label{KGZ}
 \end{cases}  
}
where $u, n$ are real valued functions, $d \ge 4, c > 0$ and $c \neq 1$. 
The physical model of \eqref{KGZ} is the interaction of the Langmuir wave and the ion acoustic wave in a plasma. 
In the physical model, $c$ satisfies $0 < c < 1$. 
When $d=3$, Ozawa, Tsutaya and Tsutsumi ~\cite{OTT2}  proved that \eqref{KGZ} is globally  well-posed in the energy space $H^1(\R^3) \cross L^2(\R^3) \cross L^2(\R^3) \cross \dot{H}^{-1}(\R^3)$.
They applied the Fourier restriction norm method to obtain the local well-posedness.
Then by the local well-posedness and the energy method, they obtained the global well-posedness.  
For $d=3$, Guo, Nakanishi and Wang ~\cite{GNW} proved scattering in the energy class with small, radial initial data. 
They applied the normal form reduction and the radial Strichartz estimates.     
If we transform $u_{\pm} := \om_1 u \pm i\p_t u, n_{\pm} := n \pm i(c\om)^{-1}\p_t n, 
\om_1:=(1-\laplacian )^{1/2}, \om := (-\laplacian )^{1/2}$, then \eqref{KGZ} is equivalent to the following. 
\EQS{
 \begin{cases}
  (i\p_t \mp \om_1) u_{\pm} 
    = \pm (1/4)(n_+ + n_-)(\om_1^{-1}u_+ + \om_1^{-1}u_-), 
               \quad (t,x) \in [-T,T] \cross \R^d, \\
  (i\p_t \mp c\om )n_{\pm} 
    = \pm (4c)^{-1}\om | \om_1^{-1} u_+ + \om_1^{-1} u_-|^2, \qquad (t,x) \in [-T,T] \cross \R^d, \\
  (u_{\pm}, n_{\pm})|_{t=0} = (u_{\pm 0}, n_{\pm 0}) 
                                \in H^s(\R^d) \cross \dot{H}^s(\R^d). 
                                                                                                                                            \label{KGZ'}
 \end{cases}  
}
Our main result is as follows. 
\begin{thm}  \label{mth}
 (i) Let $d=4$. Then \eqref{KGZ'} is locally well-posed in 
      $H^{1/4}(\R^4) \cross \dot{H}^{1/4}(\R^4)$. \\
 (ii) Let $d \ge 5$ and $s=(d^2-3d-2)/2(d+1)$. 
      Then \eqref{KGZ'} is locally well-posed in 
        $H^s(\R^d) \cross \dot{H}^s(\R^d)$. \\
 (iii) Let $d \ge 4, s=s_c = d/2-2$ and assume the initial data 
        $(u_{\pm 0},n_{\pm 0})$ is radial. Then, \eqref{KGZ'} is globally well-posed in 
          $H^s(\R^d) \cross \dot{H}^s(\R^d)$.   
\end{thm}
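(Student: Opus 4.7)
The plan is to set up a Picard iteration for the diagonalized first order system \eqref{KGZ'} inside Bourgain type resolution spaces built from the $U^2, V^2$ atomic spaces adapted to the half-Klein-Gordon phases $\tau = \pm \om_1(\x)$ and the half-wave phases $\tau = \pm c\om(\x)$. Denote these solution spaces by $X^s_{\om_1,\pm}$ and $X^s_{c\om,\pm}$, with companion $V^2$-dual norms $N^s_{\om_1,\pm}$ and $N^s_{c\om,\pm}$ controlling the Duhamel source. By the general $U^2$ theory, each solution space embeds continuously in $C_tH^s_x$, the corresponding homogeneous linear flow is isometrically contained, and the Duhamel operator is bounded from $N^s$ into $X^s$; moreover each Strichartz estimate available for the free Klein-Gordon or wave evolution transfers to the $X^s$ spaces with the same exponents.

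Once the spaces are fixed, well-posedness of \eqref{KGZ'} reduces to two bilinear estimates: a Klein-Gordon source estimate of the form
\EQ{
\|n\,(\om_1^{-1}u)\|_{N^s_{\om_1,\pm}} \lec \|n\|_{X^s_{c\om}}\|u\|_{X^s_{\om_1}},
}
and a Zakharov source estimate of the form
\EQ{
\|\om(\om_1^{-1}u_1 \cdot \om_1^{-1}u_2)\|_{N^{s-1}_{c\om,\pm}} \lec \|u_1\|_{X^s_{\om_1}}\|u_2\|_{X^s_{\om_1}},
}
together with their trilinear sums over sign choices $\pm$. I would prove these by dyadic Littlewood-Paley decomposition, the atomic transference principle for $U^2$, the Strichartz estimates in the $X^s$ spaces (the classical ones in parts (i), (ii), and the radial Strichartz estimates of Shao/Sterbenz/Guo-Wang type in part (iii)), and a case analysis on the relative sizes of the input and output frequencies together with the modulations $\LR{\ta\mp\om_1(\x)}$ and $\LR{\ta\mp c\om(\x)}$. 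The transversality of the two phases, which holds because $c \neq 1$ and because of the Klein-Gordon mass, prevents space-time resonance in every regime: in high-high to low interactions one gains from the output modulation of the wave equation, while in high-low to high interactions one gains from the modulation of the Klein-Gordon side.

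For (i) and (ii) the target regularity lies strictly above $s_c$, and the excess $s-s_c$ together with a power $T^\theta$ (extracted from sharp time truncation in $V^2$) absorbs the loss incurred by working away from the forbidden Strichartz endpoint. The specific value $s=1/4$ in $d=4$ and $s=s_c+1/(d+1)$ in $d\ge 5$ is dictated by optimizing the exponent trade-off in the wave-source bilinear estimate after inserting the sharp non-endpoint Strichartz norm. For (iii) we run the same scheme at $s=s_c$, where scaling rules out any $T^\theta$ factor; the radial Strichartz estimates supply precisely the needed additional integrability in the angular direction to close the critical bilinear estimates in scale-invariant form. Smallness of the radial data then turns the local contraction into a global one on $\R$, and the embedding of $X^s$ into $C_tH^s_x$ together with the Cauchy-criterion property of $V^2$-tails at $\pm\infty$ yields scattering to free Klein-Gordon and wave solutions.

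The main obstacle is the wave source bilinear estimate, especially in the critical radial case (iii), where the outer $\om$-derivative vanishes in the dangerous high$\times$high $\to$ low regime and the gain must come entirely from the modulation separation between $\om_1$ and $c\om$ combined with the radial Strichartz improvement. Matching these two gains against the $\ell^2$-dyadic summation is the technical heart of the argument; once secured, the Klein-Gordon source estimate follows by similar, but strictly easier, considerations because the $\om_1^{-1}$ factors tame the worst frequency regimes, and the fixed point then closes by standard contraction in a ball of the product resolution space.
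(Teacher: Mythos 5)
Your overall scheme is the same as the paper's: $U^2,V^2$ spaces adapted to $K_{\pm}$ and $W_{\pm c}$, duality, dyadic Littlewood--Paley plus modulation decomposition, transference of (radial) Strichartz estimates, contraction in a ball, and scattering from the existence of $V^2$ limits at $\pm\I$. However, there is a genuine flaw in your description of the resonance structure, and it sits exactly at what you call the technical heart. The modulation gain furnished by $c\neq 1$ (the analogue of Lemma \ref{Recovery}) is available only when the two Klein--Gordon frequencies are strongly separated: it bounds the largest of the three modulations from below by $\max\{|\xi_1|,|\xi_2|\}$ only under $|\xi_1|\gg|\xi_2|$ or $|\xi_1|\ll|\xi_2|$ (and the Klein--Gordon mass plays no role in this; only $c\neq1$ does). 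In the balanced high$\times$high$\to$low regime $N_1\sim N_2\gg N_3$ of the wave--source estimate the interaction is genuinely resonant: choosing equal characteristic signs for the two Klein--Gordon factors, all three modulations are simultaneously of size $O(N_3)$, so there is no separation of order $N_1$ to harvest, and your plan to close the critical radial case there ``entirely from the modulation separation'' would fail as stated. What saves this regime is that no such gain is needed: the output carries $\om$ at the low frequency $N_3$ and both inputs carry $\om_1^{-1}$, so H\"older plus (radial) Strichartz alone closes it; this is exactly Lemma \ref{tri} (i) and the $K_3$ block in the proof of Proposition \ref{BE}.

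Conversely, the genuinely delicate regime is high-low$\to$high, where the output frequency is comparable to the larger input ($N_2\ll N_1\sim N_3$ for the Klein--Gordon source, $N_1\ll N_2\sim N_3$ for the wave source) and the full derivative loss occurs; there one splits with $Q_{<M},Q_{\ge M}$, $M=\e N_{\max}$, and applies Lemma \ref{Recovery}. Within this regime the sub-case in which the dominant modulation sits on the \emph{low-frequency Klein--Gordon factor} (the term $I_5$, resp. $K_{1,2,1}$) cannot be closed at $s=s_c$ without radial symmetry; this is precisely what forces $s=1/4$ for $d=4$ and $s=s_c+1/(d+1)$ for $d\ge 5$, and it arises in the Klein--Gordon source estimate just as much as in the wave one. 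So your claims that the Klein--Gordon estimate is ``strictly easier'' and that the thresholds are dictated by the wave--source estimate misplace the obstruction. With the case analysis reorganized along these lines your scheme does reproduce the paper's proof, but as written the key critical step is assigned to a mechanism (modulation separation in the balanced regime) that is simply not available there.
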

\begin{cor} \label{Scatter}
 The solution obtained in Theorem \ref{mth} (iii) scatters as $t \to \pm \I$. 
\end{cor}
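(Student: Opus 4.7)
The plan is to derive scattering by an essentially direct application of the global $U^2/V^2$-adapted a priori bounds produced by the contraction argument in Theorem~\ref{mth}(iii). The Koch--Tataru framework is well-suited to this: I construct the scattering states by pushing the Duhamel integral out to $\pm\infty$, and use the bilinear estimates that already underlie the global well-posedness to verify convergence of these improper integrals in the critical space.

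First I record the Duhamel formulas for the global solution $(u_\pm,n_\pm)$ of \eqref{KGZ'}:
\EQQS{
u_\pm(t) &= e^{\mp it\om_1}\, u_{\pm 0} - i \int_0^t e^{\mp i(t-s)\om_1} \mathcal{N}_u^\pm(s)\, ds, \\
n_\pm(t) &= e^{\mp itc\om}\, n_{\pm 0} - i \int_0^t e^{\mp i(t-s)c\om} \mathcal{N}_n^\pm(s)\, ds,
}
where $\mathcal{N}_u^\pm$ and $\mathcal{N}_n^\pm$ denote the right-hand sides of \eqref{KGZ'}. Applying the reverse free propagators and letting $t\to+\infty$ motivates the candidate scattering states
\EQQS{
u_\pm^{+\infty} &:= u_{\pm 0} - i \int_0^{+\infty} e^{\pm is\om_1} \mathcal{N}_u^\pm(s)\, ds \in H^s(\R^d), \\
n_\pm^{+\infty} &:= n_{\pm 0} - i \int_0^{+\infty} e^{\pm isc\om} \mathcal{N}_n^\pm(s)\, ds \in \Hd^s(\R^d),
}
once convergence of the improper integrals is justified; the $-\infty$ states are defined analogously.

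To verify that these integrals converge in the critical space, I invoke the bilinear estimates driving the fixed-point proof of Theorem~\ref{mth}(iii). For $0\le t_1\le t_2$ they give
\EQQS{
\Big\|\int_{t_1}^{t_2} e^{\pm is\om_1} \mathcal{N}_u^\pm(s)\,ds\Big\|_{H^s} &\lec \|n_\pm\|_{X^s_{W_{\pm c}}([t_1,t_2])}\,\|u_\pm\|_{X^s_{K_\pm}([t_1,t_2])}
}
and an analogous bound for $\mathcal{N}_n^\pm$. Since $(u_\pm,n_\pm)$ lies in the global adapted space, the atomic structure of the $U^2$/$V^2$-based norms forces these restricted norms on $[t_1,\infty)$ to vanish as $t_1\to+\infty$, so the improper integrals are Cauchy in $H^s(\R^d)$ and $\Hd^s(\R^d)$ respectively. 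The scattering estimates
\EQQS{
\|u_\pm(t) - e^{\mp it\om_1}u_\pm^{+\infty}\|_{H^s} + \|n_\pm(t) - e^{\mp itc\om}n_\pm^{+\infty}\|_{\Hd^s} &\to 0 \quad (t\to+\infty)
}
then follow because the differences are just the tails $\int_t^{+\infty}$ of the same Duhamel integrals. The case $t\to-\infty$ is symmetric.

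The only real subtlety is the tail property, namely that the restricted $X^s_{K_\pm}$ and $X^s_{W_{\pm c}}$ norms on $[t_1,\infty)$ tend to zero as $t_1\to+\infty$. This is a standard feature of $U^p,V^p$-type spaces, inherited from the atomic decomposition of $U^2$-atoms together with dominated convergence, but it has to be verified explicitly for the particular hybrid function spaces used in the proof of Theorem~\ref{mth}(iii). Once this vanishing is recorded, the corollary is a direct consequence of the bilinear estimates established for the global well-posedness.
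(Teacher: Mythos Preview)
Your approach has a genuine gap in the ``tail property'' step. You assert that the restricted solution norms $\|u_\pm\|_{X^s_{K_\pm}([t_1,\infty))}$ and $\|n_\pm\|_{X^s_{W_{\pm c}}([t_1,\infty))}$ vanish as $t_1\to+\infty$, calling this a standard feature of the $U^2$/$V^2$ atomic structure. It is not. Take the free evolution $u(t)=K_\pm(t)u_{\pm 0}$: then $K_\pm(-t)u(t)\equiv u_{\pm 0}$ is constant, and the $V^2$ norm of a nonzero constant on $[t_1,\infty)$ (with the convention $v(\infty)=0$) equals $\|u_{\pm 0}\|_{L^2}$, which does not tend to zero. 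The same phenomenon occurs for $U^2$: the atom $\mathbf 1_{[0,\infty)}\phi$ has tail $U^2$ norm $\|\phi\|_{L^2}$ for every $t_1>0$. So the bilinear estimate bound you write down does not, by itself, yield a Cauchy property, and the acknowledged ``has to be verified explicitly'' step is in fact false as stated.

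The paper sidesteps this entirely. It uses only the uniform-in-$T$ bound $\|u_\pm\|_{Y^s_{K_\pm}([0,T])}+\|n_\pm\|_{\dot Y^s_{W_{\pm c}}([0,T])}<M$ coming from the contraction (the radius $r$ is independent of $T$), together with the structural fact recorded in Proposition~\ref{embedding}(i): any function with finite $V^p_0$ seminorm has limits at $\pm\infty$. Concretely, the uniform bound gives $\|\langle\nabla_x\rangle^s K_\pm(-\cdot)u_\pm(\cdot)\|_{V^2_0}<\infty$, so $f_\pm:=\lim_{t\to\pm\infty}\langle\nabla_x\rangle^s K_\pm(-t)u_\pm(t)$ exists in $L^2_x$, and $u_{\pm,\pm\infty}:=\langle\nabla_x\rangle^{-s}f_\pm$ is the scattering state. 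No tail bilinear estimate is needed at all; the existence of the limit is built into the definition of $V^p$. If you want to repair your argument, the cleanest route is precisely this: drop the tail-norm claim and instead invoke the $V^2$ limit property directly on the pulled-back solution.
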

For more precise statement of Theorem \ref{mth} and Corollary \ref{Scatter}, see Propositions \ref{main_prop1}, \ref{main_prop2}. 
The scaling regularity of \eqref{KGZ'} is $s=s_c=d/2-2$. 
We consider both the radial case and the non-radial case. 
First, we consider the radial case. 
In the radial case, the Strichartz estimates hold for a more wider range of $(q,r)$. 
More precisely, see Propostions \ref{rStrw}, \ref{rStrkg1}.   
On the other hand, we have to recover a half derivative loss to derive the key bilinear estimates at the critial space. 
Thanks to $c > 0$ and $c \neq 1$, if $|\xi| \gg |\xi'|$, then it holds that  
\EQS{
 M' := \max\bigl{\{} \bigl|\ta \pm c|\xi| \bigr|, |\ta' \pm \LR{\xi'}|, |\ta - \ta' \pm \LR{\xi - \xi'}| \bigr{\}} \gec |\xi|.  \label{rec}
}  
Here, $\xi, \xi'$ denote frequency for the wave equation, Klein-Gordon equation respectively and $\ta \pm c|\xi|$ (resp. $\ta' \pm \LR{\xi'}, \ta - \ta' \pm \LR{\xi - \xi'}$) denote the symbol of the linear part for the wave equation (resp. the Klein-Gordon equation).
From \eqref{rec} and by applying the $U^2, V^2$ type spaces, then we can recover the derivative loss. 
Therefore, we can obtain the bilinear estimates at the critical space by applying the radial Strichartz estimates and $U^2, V^2$ type spaces.  
Next, we consider $d=4$ and the non-radial case. 
When $d \le 4$, the Lorentz regularity $s_l$ is an important index as well as the scaling regularity for the well-posedness 
for the wave equation.  
When $d=4$ with quadratic nonlinearity, the Lorentz regularity $s_l = 1/4$.   
On the other hand, $s_c = 0$, so we need to consider $s \ge s_l = 1/4$.  
When $d=4$, we obtain local well-posedness at $s=s_l=1/4$ by applying $U^2, V^2$ type spaces.   
Finally, we consider $d \ge 5$ and the non-radial case. 
Since $s_c \ge s_l$ when $d \ge 5$, we expect the local well-posedness with $s=s_c$. 
However, we only obtain the local well-posedness with $s=s_c+1/(d+1)$. 
It seems difficult to prove the bilinear estimate with $s=s_c$. 
The reason is as below.     
We observe the first equation of \eqref{KGZ'}. 
We regard the nonlinearity as $n_{\pm}(\om_1^{-1}u_{\pm})$. 
%In the Fourier space, $n_{\pm}(\om_1^{-1}u_{\pm})$ is rewritten as $\til{n}_{\pm}(\ta-\ta', \xi-\xi')
% \LR{\xi'}^{-1}\til{u}_{\pm}(\ta', \xi')$. 
Here, we consider the following cases. 
The case $|\xi| \lec |\xi'|$ and the case $|\xi| \gg |\xi'|$, where $\xi, \xi'$ denote the frequency of $n_{\pm}, u_{\pm}$ respectively.  
For the case $|\xi| \lec |\xi'|$, the nonlinearity does not have the derivative loss,  
so we can derive the bilinear estimate at the critical space only by applying the Strichartz estimates.   
However, for the case $|\xi| \gg |\xi'|$, we need to recover a half derivative loss by \eqref{rec}.  
Here, there are three cases in \eqref{rec}. 
The cases 
$(a)\, M' = \bigl|\ta \pm c|\xi| \bigr|$, 
$(b)\, M' = |\ta' \pm \LR{\xi'}|$ and   
$(c)\, M' = |\ta-\ta' \pm \LR{\xi - \xi'}|$. 
For the case $(a)$ or $(c)$, we apply \eqref{rec} for $n_{\pm}$ and the Strichartz estimates for $\om_1^{-1}u_{\pm}$. 
Then we can obtain the bilinear estimate at the critical space. 
Whereas for $(b)$, we apply the Strichartz estimates for $n_{\pm}$ and apply \eqref{rec} for $\om_1^{-1}u_{\pm}$. 
In this case, we cannot prove the bilinear estimate at the critical space. 
As a result, we have to impose more regularity.

In section $2$, we prepare some notations and lemmas with respect to $U^p, V^p$, 
in section $3$, we prove the bilinear estimates 
and in section $4$, we prove the main result.  

\section*{Acknowledgement}
The author appreciate Professor K. Tsugawa for giving useful advice. 
Also, the author would like to thank S. Kinoshita for telling the author about the radial Strichartz estimates for the wave equation.

\section{Notations and Preliminary Lemmas}
In this section, we prepare some lemmas, propositions and notations to prove the main theorem. 
$A\lec B$ means that there exists $C>0$ such that $A \le CB.$ 
Also, $A\sim B$ means $A\lec B$ and $B\lec A.$  
Let $u=u(t,x).\ \F_t u,\ \F_x u$ denote the Fourier transform of $u$ in time, space, respectively. 
$\F_{t,\, x} u = \ha{u}$ denotes the Fourier transform of $u$ in space and time.    
%%%%%%%%%%%%%%%%%%%%%%%%%%%%%%%%
%%%%% Definition of U^p space %%%%%
%%%%%%%%%%%%%%%%%%%%%%%%%%%%%%%%
Let $\mathcal{Z}$ be the set of finite partitions $-\I<t_0<t_1<\cdots <t_K = \I$ and let $\mathcal{Z}_0$ 
be the set of finite partitions $-\I<t_0<t_1<\cdots <t_K \le \I$. 
\begin{defn}
Let $1\le p< \I.$ For $\{t_k\}_{k=0}^K \in \mathcal{Z}$ and $\{ \phi_k\}_{k=0}^{K-1}\subset L^2_x$ with   
$\sum_{k=0}^{K-1} \|\phi_k \|_{L^2_x}^p=1$, we call the function $a : \R \to L^2_x$ given by 
\EQQS{
 a=\sum_{k=1}^K \1_{[t_{k-1},\, t_k)}\phi_{k-1} 
}
a $U^p$-atom. Furthermore, we define the atomic space 
\EQQS{
 U^p:=\biggl{\{} u=\sum_{j=1}^{\I}\la_j a_j \, \Bigl| \, a_j : U^p \text{-atom} , \la_j \in \C \ such\ that\ \sum_{j=1}^{\I}|
          \la_j|< \I \biggr{\}}
}
with norm 
\EQQS{
 \| u\|_{U^p}:=\inf \biggl{\{} \sum_{j=1}^{\I}|\la_j| \, \Bigl| \, u=\sum_{j=1}^{\I}\la_j a_j, \la_j\in \C, a_j : U^p \text{-atom}
                   \biggr{\}}.
} 
\end{defn} 
%%%%%%%%%%%%%%%%%%%%%%%%%%%%%%%%
%%%%% Properties of U^p space %%%%%
%%%%%%%%%%%%%%%%%%%%%%%%%%%%%%%%
\begin{prop}
Let $1\le p<q<\I.$ \\
(i) $U^p$ is a Banach space. \\
(ii) The embeddings $U^p\subset U^q\subset L^{\I}_t(\R;L^2_x)$ are continuous. \\
(iii) For $u\in U^p$, it holds that $\lim_{t\to t_{0}+}\|u(t)-u(t_{0})\|_{L^2_x}=0,$ i.e. every $u\in U^p$ is right-continuous. \\
(iv) The closed subspace $U^p_c$ of all continuous functions in $U^p$ is a Banach space.
\end{prop}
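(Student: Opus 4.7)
The plan is to verify the four assertions in the indicated order, with the completeness step in (i) being the most delicate point.

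For (i), I first check that $\|\cdot\|_{U^p}$ is a norm. The triangle inequality is immediate from concatenation: given representations $u=\sum_j \la_j a_j$ and $v=\sum_j \mu_j b_j$, the merged series represents $u+v$ with total $\ell^1$-mass $\sum|\la_j|+\sum|\mu_j|$, and passing to the infimum gives $\|u+v\|_{U^p}\le\|u\|_{U^p}+\|v\|_{U^p}$. Positive homogeneity is trivial. For definiteness I establish the stronger bound $\|u\|_{L^\I_t L^2_x}\le\|u\|_{U^p}$: every $U^p$-atom $a$ satisfies $\|a(t)\|_{L^2_x}=\|\phi_{k-1}\|_{L^2_x}\le\bigl(\sum\|\phi_k\|_{L^2_x}^p\bigr)^{1/p}=1$ for every $t$, so any atomic sum is bounded pointwise in $L^2_x$ by its $\ell^1$-mass. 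For completeness I use the standard criterion that absolute convergence implies convergence: given $\sum_n\|u_n\|_{U^p}<\I$, select near-optimal decompositions $u_n=\sum_j\la_{n,j}a_{n,j}$ with $\sum_j|\la_{n,j}|\le\|u_n\|_{U^p}+2^{-n}$, and relabel the doubly-indexed family $\{\la_{n,j}a_{n,j}\}$ as a single atomic series. Its $\ell^1$-mass is finite, so it defines an element of $U^p$ which equals $\sum_n u_n$.

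For (ii), the embedding $U^q\subset L^\I_t(\R;L^2_x)$ is already contained in the pointwise estimate above. The embedding $U^p\subset U^q$ reduces to checking that every $U^p$-atom is (up to a constant $\le 1$) a $U^q$-atom. Indeed, if $\sum_k\|\phi_k\|_{L^2_x}^p=1$, then each $\|\phi_k\|_{L^2_x}\le 1$; since $p<q$ this forces $\|\phi_k\|_{L^2_x}^q\le\|\phi_k\|_{L^2_x}^p$ and hence $\sum_k\|\phi_k\|_{L^2_x}^q\le 1$. A suitable rescaling then exhibits the atom as a $U^q$-atom times a constant bounded by $1$, giving $\|u\|_{U^q}\lec\|u\|_{U^p}$ atomwise and thence for arbitrary $u\in U^p$.

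Parts (iii) and (iv) follow together from a single observation. Each $U^p$-atom is right-continuous as a map $\R\to L^2_x$ because it is piecewise constant on the half-open intervals $[t_{k-1},t_k)$; by (ii), the defining atomic series converges uniformly in $L^2_x$, and the uniform limit of right-continuous functions is right-continuous, which yields (iii). The same uniform convergence shows that a limit in $U^p$ of continuous functions remains continuous, so $U^p_c$ is closed in $U^p$ and therefore a Banach space in its own right.

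The main obstacle is the completeness step in (i): it requires carefully repackaging a doubly-indexed family of atoms into a single absolutely summable atomic decomposition, after which the other parts follow directly from the pointwise bound on a single atom and elementary properties of uniform convergence.
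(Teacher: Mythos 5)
Your argument is correct: the paper does not prove this proposition but simply cites it as Proposition 2.2 of \cite{HHK}, and your proof reproduces the standard argument given there (completeness of $U^p$ via the absolutely-convergent-series criterion with near-optimal atomic decompositions repackaged into one series, the atomwise comparison $\sum_k\|\phi_k\|_{L^2_x}^q\le\sum_k\|\phi_k\|_{L^2_x}^p$ plus rescaling for $U^p\subset U^q$, the pointwise bound $\|a\|_{L^\I_t L^2_x}\le 1$ for the embedding into $L^\I_t L^2_x$, and uniform convergence of the atomic series for right-continuity and the closedness of $U^p_c$). No gaps worth flagging; the only cosmetic point is to treat the degenerate case where an atom's rescaling constant vanishes (i.e.\ the atom is zero), which is trivial.
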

The above proposition is in ~\cite{HHK} (Proposition 2.2).  
%%%%%%%%%%%%%%%%%%%%%%%%%%%%%%%
%%%%% Definition of V^p space %%%%%
%%%%%%%%%%%%%%%%%%%%%%%%%%%%%%%
\begin{defn}
Let $1\le p<\I.$ We define $V^p$ as the normed space of all functions $v:\R\to L^2_x$ such that 
$\lim_{t\to \pm \I}v(t)$ exist and for which the norm 
\EQQS{
\| v\|_{V^p}:=\sup_{\{ t_k\}_{k=0}^K\in \mathcal{Z}}\Bigl(\sum_{k=1}^K\| v(t_k)-v(t_{k-1})\|_{L^2_x}^p\Bigr)^{1/p}  
} 
is finite, where we use the convention that $v(-\I):=\lim_{t\to -\I}v(t)$ and $v(\I):=0.$ 
Likewise, let $V_-^p$ denote the closed subspace of all $v \in V^p$ with $\lim_{t\to -\I}v(t)=0.$ 
\end{defn}
The definitions of $V^p$ and $V^p_-$, see the erratum ~\cite{HHK2}.
%%%%%%%%%%%%%%%%%%%%%%%%%%%%%%%%
%%%%% Properties of V^p space %%%%%
%%%%%%%%%%%%%%%%%%%%%%%%%%%%%%%%
\begin{prop} \label{embedding}
Let $1\le p<q<\I.$\\
(i) Let $v:\R \to L^2_x$ be such that 
\EQQS{
 \| v\|_{V^p_0}:= \sup_{ \{ t_k\}_{k=0}^K\in \mathcal{Z}_0}\Bigl( \sum_{k=1}^K\| v(t_{k})-v(t_{k-1})\|_{L^2_x}^p\Bigr)^{1/p}
}
is finite. Then, it follows that $v(t_0^+):=\lim_{t\to t_0+} v(t)$ exists for all $t_0\in [-\I,\I)$ and 
$v(t_0^{-}):=\lim_{t\to t_0-} v(t)$ exists for all $t_0\in (-\I,\I]$ and moreover, 
\EQQS{
 \| v\|_{V^p}=\| v\|_{V^p_0}.
}
(ii) We define the closed subspace $V^p_{rc}\, (V^p_{-,\, rc})$ of all right-continuous $V^p$ functions ($V^p_-$ functions). 
The spaces $V^p,\ V^p_{rc},\ V^p_-$ and $V^p_{-,\, rc}$ are Banach spaces. \\
(iii) The embeddings $U^p\subset V^p_{-,\, rc}\subset U^q$ are continuous. \\
(iv) The embeddings $V^p\subset V^q$ and $V^p_-\subset V^q_-$ are continuous. 
\end{prop}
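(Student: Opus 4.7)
The plan is to handle the four parts in the order (i), (iv), (ii), (iii), leaving the substantive work for last. For (i), existence of the one-sided limits is a Cauchy-extraction argument: if $v(t_0^+)$ fails to exist for some $t_0 \in [-\I, \I)$, there are $\eps > 0$ and sequences $s_n, r_n \to t_0^+$ with $\|v(s_n) - v(r_n)\|_{L^2_x} > \eps$, and after passing to a subsequence the pairs can be arranged as nested disjoint intervals $t_0 < s_{n+1} < r_{n+1} < s_n < r_n$, so concatenating any $N$ of them into a partition in $\mathcal{Z}_0$ produces a sum $\ge N\eps^p$, contradicting $\|v\|_{V^p_0} < \I$. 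The same extraction handles left limits and the endpoints $\pm\I$. For the identity of norms, $\mathcal{Z} \subset \mathcal{Z}_0$ gives $\|v\|_{V^p} \le \|v\|_{V^p_0}$, while conversely any partition $\{t_k\}_{k=0}^K \in \mathcal{Z}_0$ with $t_K < \I$ may be enlarged by appending $\I$; since $v(\I) := 0$, the added term $\|v(t_K)\|_{L^2_x}^p$ is nonnegative, giving a dominating partition in $\mathcal{Z}$. Part (iv) is immediate from the elementary inequality that for nonnegative $(a_k)$ with $\sum_k a_k^p \le M^p$ one has $a_k \le M$ for every $k$, hence $\sum_k a_k^q \le M^{q-p}\sum_k a_k^p \le M^q$; applied to $a_k := \|v(t_k) - v(t_{k-1})\|_{L^2_x}$ over an arbitrary partition this yields $\|v\|_{V^q} \le \|v\|_{V^p}$, and the vanishing condition at $-\I$ clearly descends.

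For (ii), the key ingredient is the continuous embedding $V^p \hookrightarrow L^\I_t L^2_x$ with norm $\le 1$: for any real $t$, the two-point partition $\{t, \I\} \in \mathcal{Z}$ together with $v(\I) := 0$ gives $\|v(t)\|_{L^2_x} \le \|v\|_{V^p}$. A Cauchy sequence $\{v_m\}$ in $V^p$ is therefore Cauchy in $L^\I_t L^2_x$, so it converges uniformly (and hence pointwise) in $L^2_x$ to some $v$. For each fixed partition the finite sum $\sum_k \|v_m(t_k) - v_m(t_{k-1})\|_{L^2_x}^p$ converges to the analogous sum for $v$, so taking the supremum and then $\liminf_m$ gives $\|v\|_{V^p} \le \liminf_m \|v_m\|_{V^p}$; applying the same observation to $v_m - v_{m'}$ and sending $m' \to \I$ produces $\|v - v_m\|_{V^p} \to 0$. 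Uniform convergence preserves both right-continuity and the vanishing limit at $-\I$, so $V^p_{rc}$, $V^p_-$, and $V^p_{-,rc}$ are closed subspaces of $V^p$ and thus complete.

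Part (iii) is where the real work lies. The inclusion $U^p \subset V^p_{-,rc}$ is a direct atomic calculation: a $U^p$-atom $a = \sum_{k=1}^K \1_{[t_{k-1},t_k)}\phi_{k-1}$ is right-continuous, vanishes for $t < t_0$, and via $\|x-y\|^p \le 2^{p-1}(\|x\|^p + \|y\|^p)$ satisfies $\|a\|_{V^p}^p \le 2^p \sum_k \|\phi_{k-1}\|_{L^2_x}^p = 2^p$; the atomic definition of $U^p$ and the triangle inequality then yield $\|u\|_{V^p} \le 2\|u\|_{U^p}$. The main obstacle is the reverse embedding $V^p_{-,rc} \subset U^q$, for which my strategy is a greedy dyadic approximation: normalize $\|v\|_{V^p} = 1$ and, for each $n \ge 0$, set $t_0^{(n)} := -\I$ and $t_{k+1}^{(n)}$ to be the first time after $t_k^{(n)}$ at which $\|v(t) - v(t_k^{(n)})\|_{L^2_x} \ge 2^{-n}$; right-continuity makes these stopping times well defined, and the $V^p$-bound forces the count $K_n$ of breakpoints at level $n$ to satisfy $K_n \lec 2^{np}$. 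Let $v_n$ be the step function with value $v(t_k^{(n)})$ on $[t_k^{(n)}, t_{k+1}^{(n)})$; then $\|v_n - v\|_{L^\I_t L^2_x} \le 2^{-n}$ and $v_{-1} := 0$ (using $v \in V^p_-$), so the telescoping $v = \sum_{n \ge 0}(v_n - v_{n-1})$ converges uniformly. After dividing $v_n - v_{n-1}$ by the $\ell^q$-mass of its jumps, which is of order $2^{-n} K_n^{1/q} \lec 2^{-n(q-p)/q}$, each normalized increment becomes a genuine $U^q$-atom, and the coefficients sum to the convergent geometric series $\sum_n 2^{-n(q-p)/q}$ whose convergence relies precisely on $p < q$, giving $\|v\|_{U^q} \lec 1$. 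The delicate point, and the main source of technical care, is the step-mass bookkeeping: the stopping rule only guarantees $\|v_n - v_{n-1}\|_{L^2_x}$-jumps of size $\gtrsim 2^{-n}$ at the refined breakpoints introduced at level $n$, and one must carefully separate those from the ones inherited from level $n-1$ in order to extract $U^q$-atoms with the advertised $\ell^q$-normalization.
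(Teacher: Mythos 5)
Your argument is correct, but note that the paper itself gives no proof of this proposition: it simply cites Hadac--Herr--Koch (Proposition 2.4 and Corollary 2.6), and what you have written is essentially a reconstruction of that cited proof --- parts (i), (ii), (iv) by the standard bounded $p$-variation arguments (Cauchy extraction for one-sided limits, appending $\I$ to pass between $\mathcal{Z}_0$ and $\mathcal{Z}$, the embedding $V^p\subset L^\I_t L^2_x$ for completeness, and $\ell^p\subset\ell^q$ for (iv)), and the key embedding $V^p_{-,\,rc}\subset U^q$ by the same greedy stopping-time decomposition at dyadic thresholds $2^{-n}$ that underlies the HHK proof. One remark: the ``delicate point'' you flag at the end is not actually needed. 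Since $\|v_n-v_{n-1}\|_{L^\I_t L^2_x}\lec 2^{-n}$ and the number of steps of $v_n-v_{n-1}$ is at most $K_n+K_{n-1}+1\lec 2^{np}$, the crude step-by-step bound on the $\ell^q$ mass of the \emph{values} (which is what the $U^q$-atom normalization requires, rather than the jumps) already gives a coefficient $\lec 2^{-n}\,2^{np/q}=2^{-n(1-p/q)}$, summable precisely because $p<q$; no separation of newly created breakpoints from inherited ones is required. The only other points to tidy are routine: in the atom estimate $\|a\|_{V^p}\lec 1$ one should first discard partition points giving zero increments so that each value $\phi_{k-1}$ is counted at most once, and in (ii) one should note that uniform convergence also furnishes the limits of $v$ at $\pm\I$ required by the definition of $V^p$.
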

The proof of Proposition \ref{embedding} is in ~\cite{HHK} (Proposition 2.4 and Corollary 2.6). 
%%%%%%%%%%%%%%%%%%%%%%%%%%%%%%%%%%%%
%%%%%  Notation of operators Q_i^A  %%%%%
%%%%%%%%%%%%%%%%%%%%%%%%%%%%%%%%%%%%
Let $\{ \F_{\xi}^{-1}[\varphi_n](x)\}_{n\in \Z}\subset \mathcal{S}(\R^d)$ be the Littlewood-Paley decomposition with respect 
to $x$, that is to say
\EQQS{
\begin{cases}
 \varphi(\xi) \ge 0, \\
 \supp \varphi(\xi) = \{ \xi \,|\, 2^{-1} \le |\xi| \le 2\}, 
\end{cases}
}
\EQQS{
 \varphi_{n}(\xi) := \varphi(2^{-n}\xi),\ \sum_{n=-\I}^{\I}\varphi_{n}(\xi)=1\ (\, \xi \neq 0),\ 
 \psi(\xi) := 1-\sum_{n=0}^{\I}\varphi_{n}(\xi).
} 
Let $N=2^n\ (n \in \Z)$ be dyadic number. $P_N$ and $P_{<1}$ denote 
\EQQS{
 &\F_{x}[P_N f](\xi):=\varphi(\xi/N)\F_x[f](\xi)=\varphi_n(\xi)\F_x[f](\xi), \\
 &\F_{x}[P_{<1} f](\xi):=\psi(\xi)\F_x[f](\xi).
} 
Let $P_{\ge 1} := \sum_{N \ge 1} P_N$. 
Similarly, let $\til{Q}_N$ be 
\EQQS{
 \F_t [\til{Q}_N g](\ta):=\phi(\ta/N)\F_t[g](\ta)=\phi_n(\ta)\F_t[g](\ta),
}
where  $\{ \F_{\ta}^{-1}[\phi_{n}](t)\}_{n\in \Z}\subset \mathcal{S}(\R)$ be the Littlewood-Paley decomposition with respect 
to $t$.

Let $K_{\pm}(t)=\exp \{\mp it (1-\laplacian)^{1/2} \}: L^2_x\to L^2_x$ be the Klein-Gordon unitary operator such that 
$\F_x[K_{\pm}(t)u_0](\xi ) = \exp \{\mp it \LR{\xi} \}\, \F_x[u_0](\xi ).$ 
Similarly, we define the wave unitary operator $W_{\pm c}(t)=\exp \{\mp ict (-\laplacian)^{1/2}\}:L^2_x\to L^2_x$ such that 
$\F_x[W_{\pm c}(t)n_0](\xi ) = \exp \{\mp ict |\xi |\}\, \F_x[n_0](\xi ).$
\begin{defn}
We define \\ 
 $(i)\, U^p_{K_{\pm}}=K_{\pm}(\cdot)U^p$ with norm $\| u\|_{U^p_{K_{\pm}}}=\|K_{\pm}(-\cdot )u\|_{U^p},$\\
 $(ii)\, V^p_{K_{\pm}}=K_{\pm}(\cdot)V^p$ with norm $\| u\|_{V^p_{K_{\pm}}}=\|K_{\pm}(-\cdot )u\|_{V^p}.$\\
For dyadic number $N, M$, 
\EQQS{
 Q_N := K_{\pm}(\cdot)\til{Q}_N K_{\pm}(-\cdot), \quad 
 Q_{\ge M} := \sum_{N \ge M} Q_N, \quad 
 Q_{<M} := Id - Q_{\ge M}.
}
Here summation over $N$ means that summation over $n\in \Z$. 
Similarly, we define $U^p_{W_{\pm c}}, V^p_{W_{\pm c}}$. 
\end{defn}

\begin{rem} \label{embed}
For $L^2_x$ unitary operator $A=K_{\pm}$ or $W_{\pm c},$ 
\EQQS{
 U^2_A \subset V^2_{-,\, rc,\, A} \subset L^{\I}(\R; L^2_x)
}
\end{rem}
\begin{defn} \label{defX} 
For the Klein-Gordon equation, we define $\dot{Y}^s_{K_{\pm}}, \dot{Z}^s_{K_{\pm}}, Y^s_{K_{\pm}}, Z^s_{K_{\pm}}$ as 
the closure of all $u \in C(\R; H^s_x(\R^d))$ such that   
\EQQS{
  &\|u\|_{\dot{Y}^s_{K_{\pm}}} := \Bigl(\sum_N N^{2s}\|P_N u\|^2_{V^2_{K_{\pm}}}\Bigr)^{1/2}, 
    \quad 
     \|u\|_{\dot{Z}^s_{K_{\pm}}} := \Bigl(\sum_N N^{2s}\|P_N u\|^2_{U^2_{K_{\pm}}}\Bigr)^{1/2}, \\
  &\|u\|_{Y^s_{K_{\pm}}} := \|P_{<1}u\|_{V^2_{K_{\pm}}} + \Bigl(\sum_{N \ge 1} N^{2s}\|P_N u\|^2_{V^2_{K_{\pm}}}\Bigr)^{1/2}, \\
  &\|u\|_{Z^s_{K_{\pm}}} := \|P_{<1}u\|_{U^2_{K_{\pm}}} + \Bigl(\sum_{N \ge 1} N^{2s}\|P_N u\|^2_{U^2_{K_{\pm}}}\Bigr)^{1/2}. 
} 
Similarly, for the wave equation, we define 
$\dot{Y}^s_{W_{\pm c}}, \dot{Z}^s_{W_{\pm c}}, Y^s_{W_{\pm c}}, Z^s_{W_{\pm c}}$ by replacing 
$K_{\pm}$ with $W_{\pm c}$ in the above norms.    
\end{defn}
\begin{defn}
For a Hilbert space $H$ and a Banach space $X\subset C(\R; H)$, we define
\EQQS{
 &B_r(H):=\{ f \in H\, |\, \|f\|_H \le r \}, \\
 &X([0,T)):=\{ u \in C([0,T);H)\, |\, ^{\exists} \til{u} \in X , \til{u}(t)=u(t), t \in [0,T) \} 
}
endowed with the norm $\|u\|_{X([0,T))}=\inf \{ \|\til{u}\|_X |\,  \til{u}(t)=u(t), t \in [0,T)\}.$
\end{defn}
We denote the Duhamel term 
\EQQS{
 &I_{T, K_{\pm}}(n,v) := \pm \int_0^t \1_{[0,T]}(t')K_{\pm}(t-t')n(t')(\om_1^{-1}v(t'))dt', \\
 &I_{T, W_{\pm c}}(u,v) := \pm \int_0^t \1_{[0,T]}(t')W_{\pm c}(t-t')\om \bigl( (\om_1^{-1}u(t')) (\ol{\om_1^{-1}v(t')})\bigr) dt'
}
for the Klein-Gordon equation and the wave equation respectively. 

\begin{lem} \label{Recovery}
Let $c > 0, c \neq 1$ and $\ta_3=\ta_1-\ta_2,\ \xi_3=\xi_1-\xi_2.$ 
If $|\xi_1| \gg |\xi_2|$ or $|\xi_1| \ll |\xi_2|,$ then it holds that 
\EQS{ \label{recover}
 \max\big\{ \big|\ta_1 \pm |\xi_1| \big|, \big|\ta_2 \pm |\xi_2| \big|, \big|\ta_3 \pm c|\xi_3| \big|  \big\} 
  \gec \max\{|\xi_1|, |\xi_2|\}.
} 
\end{lem}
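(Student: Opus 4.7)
\textbf{Proof plan for Lemma \ref{Recovery}.}
The plan is to collapse the three resonance functions into a single algebraic expression in $|\xi_1|,|\xi_2|,|\xi_3|$ by exploiting $\tau_1=\tau_2+\tau_3$, and then exploit the hypothesis $c\neq 1$ together with the frequency imbalance. Concretely, let $\epsilon_1,\epsilon_2,\epsilon_3\in\{+1,-1\}$ denote the signs appearing in the three expressions, and set
\EQQS{
\sigma_1:=\tau_1+\epsilon_1|\xi_1|,\qquad \sigma_2:=\tau_2+\epsilon_2|\xi_2|,\qquad \sigma_3:=\tau_3+\epsilon_3 c|\xi_3|.
}
Since $\tau_3=\tau_1-\tau_2$, the combination $\sigma_1-\sigma_2-\sigma_3$ is free of $\tau$'s, and the triangle inequality gives
\EQQS{
3\max\{|\sigma_1|,|\sigma_2|,|\sigma_3|\}\;\ge\;|\sigma_1-\sigma_2-\sigma_3|\;=\;\bigl|\epsilon_1|\xi_1|-\epsilon_2|\xi_2|-\epsilon_3 c|\xi_3|\bigr|.
}
So the problem reduces to a purely lower-bound on the right-hand side.

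Next I would exploit the frequency imbalance. By the symmetry $\xi_1\leftrightarrow\xi_2$ (the lemma is symmetric after relabeling, noting $\xi_3=\xi_1-\xi_2$), it suffices to handle $|\xi_1|\gg|\xi_2|$. In that regime $\xi_3=\xi_1-\xi_2$ satisfies $\bigl||\xi_3|-|\xi_1|\bigr|\le|\xi_2|\ll|\xi_1|$, i.e.\ $|\xi_3|=|\xi_1|+O(|\xi_2|)$. Substituting,
\EQQS{
\bigl|\epsilon_1|\xi_1|-\epsilon_3 c|\xi_3|\bigr|\;=\;\bigl|(\epsilon_1-\epsilon_3 c)|\xi_1|\bigr|+O(|\xi_2|),
}
and $|\epsilon_1-\epsilon_3 c|$ is either $|1-c|$ (when $\epsilon_1=\epsilon_3$) or $1+c$ (when $\epsilon_1=-\epsilon_3$). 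Because $c>0$ and $c\neq 1$, both of these quantities are strictly positive, so
\EQQS{
\bigl|\epsilon_1|\xi_1|-\epsilon_3 c|\xi_3|\bigr|\;\gtrsim_{c}\;|\xi_1|.
}
Absorbing the $-\epsilon_2|\xi_2|$ contribution, which is $O(|\xi_2|)=o(|\xi_1|)$, the full expression still satisfies $\bigl|\epsilon_1|\xi_1|-\epsilon_2|\xi_2|-\epsilon_3 c|\xi_3|\bigr|\gtrsim |\xi_1|=\max\{|\xi_1|,|\xi_2|\}$, which combined with the triangle inequality above gives \eqref{recover}.

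The only place any care is needed is verifying that the constant really does not degenerate; this is quantified by $\min(|1-c|,\,1+c)>0$, which is exactly where the standing hypotheses $c>0$ and $c\neq 1$ are used. The symmetric regime $|\xi_1|\ll|\xi_2|$ is handled identically after swapping the roles of $\xi_1$ and $\xi_2$ (and noting that then $|\xi_3|=|\xi_2|+O(|\xi_1|)$), so there is no genuine obstacle, only a short sign check that I would organize by splitting into $\epsilon_1=\epsilon_3$ vs.\ $\epsilon_1=-\epsilon_3$.
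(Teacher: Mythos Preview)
Your proposal is correct and follows essentially the same approach as the paper: both eliminate the $\tau$-variables via the linear combination $\sigma_1-\sigma_2-\sigma_3$ and the triangle inequality, then bound the resulting expression $\bigl|\epsilon_1|\xi_1|-\epsilon_2|\xi_2|-\epsilon_3 c|\xi_3|\bigr|$ from below using $|\xi_1|\gg|\xi_2|$ and $c\neq 1$. The only cosmetic difference is that the paper splits into the cases $0<c<1$ versus $c>1$ (bounding the expression crudely from below by $|\xi_1|-|\xi_2|-c|\xi_3|$ or $c|\xi_3|-|\xi_1|-|\xi_2|$ respectively), whereas you organize the case analysis by the sign pattern $\epsilon_1=\pm\epsilon_3$ and use $|\xi_3|=|\xi_1|+O(|\xi_2|)$; both routes arrive at the same constant $\min(|1-c|,1+c)$.
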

\begin{proof}
We only prove the case $|\xi_1| \gg |\xi_2|$ since the case $|\xi_1| \ll |\xi_2|$ is proved by the same manner.  
\EQS{
 (\text{l.h.s}) \gec | (\ta_1 \pm |\xi_1|) - (\ta_2 \pm |\xi_2|) - (\ta_3 \pm c|\xi_3|)|     \label{recov}
}
If $0 < c < 1$, then the right hand side of \eqref{recov} is bounded by 
\EQQS{
 |\xi_1| - |\xi_2| - c|\xi_1 - \xi_2|  
   \gec |\xi_1|.
}
If $c > 1$, then the right hand side of \eqref{recov} is bounded by 
\EQQS{
 c|\xi_1 - \xi_2| - |\xi_1| - |\xi_2| 
   \gec |\xi_1|.
}   
\end{proof}
The following proposition is in ~\cite{HHK} (Theorem 2.8 and Proposition 2.10). 
\begin{prop}  \label{U2norm}
$u\in V^1_-\sub U^2$ be absolutely continuous on compact intervals and $v \in V^2.$
Then, $\|u\|_{U^2}=\ds \sup_{v \in V^2,\, \|v\|_{V^2}=1}\Bigl|\int_{-\I}^{\I}\LR{u'(t), v(t)}_{L^2_x} dt\Bigr|.$
\end{prop}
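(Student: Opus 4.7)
The identity is the standard $U^2$--$V^2$ duality of Hadac--Herr--Koch \cite{HHK} realized by the pairing
\[
B(u,v) := \int_{-\I}^{\I} \LR{u'(t), v(t)}_{L^2_x}\, dt,
\]
which is a bona fide Lebesgue integral under the hypotheses, since absolute continuity of $u$ gives $u' \in L^1_{loc}(\R; L^2_x)$ while $v \in V^2 \subset L^\I_t L^2_x$ by Remark \ref{embed}. I would prove the two inequalities separately.

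For the upper bound $|B(u,v)| \le \|u\|_{U^2}\|v\|_{V^2}$, the plan is to first restrict to $v$ a right-continuous step function (which is dense in $V^2$ in a norm-respecting sense). Absolute continuity of $u$ and the fundamental theorem of calculus then give the telescoping identity
\[
B(u, v) = \sum_k \LR{u(t_k) - u(t_{k-1}),\, v(t_{k-1})}_{L^2_x}
\]
when $v$ is piecewise constant on $\{t_k\}$. Taking a near-optimal atomic decomposition $u = \sum_j \lambda_j a_j$ with $\sum_j |\lambda_j| \le \|u\|_{U^2} + \eps$, substituting $u(t_k) - u(t_{k-1}) = \sum_j \lambda_j [a_j(t_k) - a_j(t_{k-1})]$, and performing Abel summation atom by atom on a common refinement rewrites the contribution of each atom in the form $-\sum_k \LR{\phi^{(j)}_{k-1},\, v(t^{(j)}_k) - v(t^{(j)}_{k-1})}_{L^2_x}$. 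Cauchy--Schwarz together with the atom normalization $\sum_k \|\phi^{(j)}_{k-1}\|_{L^2_x}^2 = 1$ and the definition of $\|v\|_{V^2}$ bounds each atomic piece by $\|v\|_{V^2}$, and summation in $j$ yields $|B(u, v)| \le (\|u\|_{U^2} + \eps)\|v\|_{V^2}$; density and $\eps \to 0$ close out this direction.

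For the reverse inequality, the plan is to exhibit, for each $\eps > 0$, a unit-norm step function $v_\eps \in V^2$ whose pairing with $u$ saturates $\|u\|_{U^2}$ up to $\eps$. The construction is guided by a near-extremal atomic decomposition of $u$: one chooses the partition and step values of $v_\eps$ so that the Cauchy--Schwarz inequality of the previous paragraph is nearly in its equality case. The main obstacle is precisely this reverse direction, since the upper bound is a routine telescoping/Cauchy--Schwarz calculation, whereas extremality requires that the atomic infimum defining $\|u\|_{U^2}$ equal (not merely dominate) the dual pairing --- this is the substantive content of Theorem 2.8 of \cite{HHK}, which I would invoke to conclude.
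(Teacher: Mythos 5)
First, note that the paper does not prove Proposition \ref{U2norm} at all: it is quoted from Hadac--Herr--Koch \cite{HHK} (Theorem 2.8 together with Proposition 2.10), so the benchmark here is that citation. Your outline follows the same route, but two of its steps are not sound as written. The reduction ``restrict to $v$ a right-continuous step function, which is dense in $V^2$ in a norm-respecting sense'' is not available: step functions are \emph{not} dense in $V^2$ in the $V^2$-norm (their closure is a proper closed subspace), so a norm-density argument does not close the upper bound. What does work is a sampling argument: replace $v$ by the step function equal to $v(s_{k-1})$ on $[s_{k-1},s_k)$ for partitions with mesh tending to zero; these approximants have $V^2$-norm at most $\|v\|_{V^2}$ and converge to $v$ at every continuity point, hence almost everywhere (a $V^2$ function is regulated, so has at most countably many discontinuities), and since $u\in V^1_-$ is absolutely continuous on compact intervals one has $\|u'(\cdot)\|_{L^2_x}\in L^1_t(\R)$, so dominated convergence transfers the telescoped Cauchy--Schwarz bound from step $v$ to general $v\in V^2$. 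That is a repair, not a cosmetic rewording of what you proposed.

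Second, and more seriously, the reverse inequality cannot be obtained the way you describe. A near-optimal atomic decomposition of $u$ only yields \emph{upper} bounds on pairings; arranging near-equality in Cauchy--Schwarz for one atom controls nothing about the cross contributions of the other atoms, and the claim that the supremum of the pairing dominates the atomic infimum is precisely the nontrivial content of the duality $(U^2)^*\cong V^2$ --- it cannot be extracted from the decomposition itself. You do fall back on invoking Theorem 2.8 of \cite{HHK}, which is legitimate (the paper does the same), but Theorem 2.8 is stated for the partition-based pairing $B(u,v)$, not for $\int_{-\I}^{\I}\LR{u'(t),v(t)}_{L^2_x}\,dt$; to transfer a near-optimal $v\in V^2$ for $B$ to the integral pairing you need the identity $B(u,v)=-\int_{-\I}^{\I}\LR{u'(t),v(t)}_{L^2_x}\,dt$ for that (generally non-step) $v$, which is exactly Proposition 2.10 of \cite{HHK} and is proved by the sampling/dominated-convergence argument above. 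So either cite both Theorem 2.8 and Proposition 2.10, as the paper does, or carry out that approximation step; as written, the proposal has a gap at both places.
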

\begin{cor}  \label{U2A}
Let $A=K_{\pm}$ or $W_{\pm c}$ and $u \in V^1_{-,\, A} \subset U^2_A$ be absolutely continuous on compact intervals and 
$v \in V^2_A.$
Then, 
\EQQS{
 \|u\|_{U^2_A} = \sup_{v \in V^2_A,\, \|v\|_{V^2_A}=1}
                      \Bigl|\int_{-\I}^{\I} \LR{A(t)(A(-\cdot )u)'(t), v(t)}_{L^2_x} dt\Bigr|.
}
\end{cor}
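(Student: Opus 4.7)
The plan is to reduce Corollary \ref{U2A} to Proposition \ref{U2norm} by exploiting the fact that multiplication by the unitary $A(-\cdot)$ gives an isometric isomorphism between the spaces decorated by $A$ and their undecorated counterparts. Concretely, I would set $\tilde u(t) := A(-t)u(t)$ and, for each competitor $v$, $\tilde v(t) := A(-t)v(t)$. By the very definitions of $\|\cdot\|_{U^2_A}$, $\|\cdot\|_{V^2_A}$ and $\|\cdot\|_{V^1_{-,\, A}}$, the maps $u \mapsto \tilde u$ and $v \mapsto \tilde v$ are isometric isomorphisms from $U^2_A$, $V^2_A$, $V^1_{-,\, A}$ onto $U^2$, $V^2$, $V^1_-$ respectively. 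The hypothesis that $u$ is absolutely continuous on compact intervals says precisely that $\tilde u$ is, so $\tilde u'(t) = (A(-\cdot)u)'(t)$ exists almost everywhere as an element of $L^2_x$.

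Next I would use the unitarity of $A(t)$ on $L^2_x$, i.e.\ $A(t)^* = A(-t)$, to rewrite the integrand as
\begin{equation*}
\LR{A(t)(A(-\cdot)u)'(t),\, v(t)}_{L^2_x}
= \LR{\tilde u'(t),\, A(-t)v(t)}_{L^2_x}
= \LR{\tilde u'(t),\, \tilde v(t)}_{L^2_x}.
\end{equation*}
Combined with the isometry $\|v\|_{V^2_A} = \|\tilde v\|_{V^2}$, which allows reparametrising the supremum over $\{v \in V^2_A : \|v\|_{V^2_A} = 1\}$ as a supremum over $\{\tilde v \in V^2 : \|\tilde v\|_{V^2} = 1\}$, the right-hand side of the claim becomes
\begin{equation*}
\sup_{\tilde v \in V^2,\, \|\tilde v\|_{V^2} = 1} \Bigl| \int_{-\I}^{\I} \LR{\tilde u'(t),\, \tilde v(t)}_{L^2_x}\, dt \Bigr|.
\end{equation*}
Since $\tilde u \in V^1_- \subset U^2$ is absolutely continuous on compact intervals, Proposition \ref{U2norm} identifies this supremum with $\|\tilde u\|_{U^2}$, which equals $\|u\|_{U^2_A}$ by definition.

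The argument is essentially bookkeeping, so I do not anticipate a genuine obstacle. The only step that deserves attention is the unitarity identity $A(t)^* = A(-t)$, which holds since $K_\pm(t)$ and $W_{\pm c}(t)$ act on the Fourier side as multiplication by the modulus-one symbols $e^{\mp it\LR{\xi}}$ and $e^{\mp ict|\xi|}$ respectively; given this, the change-of-variable bijections used above follow immediately from the definitions of $U^2_A$, $V^2_A$ and $V^1_{-,\, A}$.
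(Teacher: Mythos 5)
Your proposal is correct and is essentially the argument the paper intends: the corollary is stated as an immediate consequence of Proposition \ref{U2norm}, obtained exactly by conjugating with the unitary group $A(\cdot)$, using that $u\mapsto A(-\cdot)u$ is an isometric bijection $U^2_A\to U^2$, $V^2_A\to V^2$, $V^1_{-,A}\to V^1_-$ by definition of the norms, together with $A(t)^*=A(-t)$ to move the propagator onto $v$ in the pairing. No gaps; the bookkeeping you describe is all that is needed.
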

\begin{lem}  \label{p}
Let $M >0$ and $Q \in \{ Q_{<M}, Q_{\ge M}\}$. 
For $1 \le p \le \I$ and $f \in V^2_{K_{\pm}}$, it holds that 
\EQS{ 
 \|Q (\1_{[0,T]}f)\|_{L^p_t L^2_x} \lec T^{1/p} \|f\|_{V^2_{K_{\pm}}}.         \label{p'}
}
\end{lem}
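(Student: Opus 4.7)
The strategy reduces the claim to three pieces: unitary conjugation, uniform boundedness of the smoothed time-frequency cutoff, and the trivial embedding $V^2_{K_\pm}\hookrightarrow L^\infty_t L^2_x$ recorded in Remark \ref{embed}.

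First I would exploit the definition $Q = K_\pm(\cdot)\, \tilde Q\, K_\pm(-\cdot)$, where $\tilde Q$ stands for the time-frequency multiplier $\tilde Q_{<M}$ or $\tilde Q_{\ge M}$. Setting $g(t):=K_\pm(-t) f(t)$ and using that $K_\pm(t)$ is an isometry on $L^2_x$ for each $t$, we have the pointwise identity
\[
\|Q(\mathbf 1_{[0,T]} f)(t)\|_{L^2_x}=\|\tilde Q(\mathbf 1_{[0,T]} g)(t)\|_{L^2_x},
\]
so the problem reduces to estimating $\tilde Q$ on $L^p_t L^2_x$ together with controlling $g$ in $L^\infty_t L^2_x$ by $\|f\|_{V^2_{K_\pm}}$.

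Next I would verify that $\tilde Q$ is bounded on $L^p_t L^2_x$ with a constant independent of $M$ for every $1\le p\le \infty$. Writing the Littlewood-Paley pieces $\phi_n(\tau)=\phi(2^{-n}\tau)$ and $M=2^{n_0}$, the symbol of $\tilde Q_{<M}$ equals $\Psi(\tau/M)$ with $\Psi(\eta):=\sum_{m<0}\phi(2^{-m}\eta)$, which is a fixed smooth compactly supported function; the symbol of $\tilde Q_{\ge M}$ is the complementary $1-\Psi(\tau/M)$. The corresponding convolution kernels on the $t$ line are thus $M\,\check\Psi(M\,\cdot)$ (plus a Dirac mass for $\tilde Q_{\ge M}$), whose $L^1_t$-norms equal $\|\check\Psi\|_{L^1}$, independent of $M$. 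Minkowski's integral inequality applied in the Bochner sense then yields
\[
\|\tilde Q h\|_{L^p_t L^2_x}\lec \|h\|_{L^p_t L^2_x}
\]
uniformly in $M$, for all $1\le p\le \infty$.

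Finally, Hölder in $t$ on $[0,T]$ combined with $V^2_{K_\pm}\hookrightarrow L^\infty_t L^2_x$ (Remark \ref{embed}) and the unitarity of $K_\pm(-\cdot)$ gives
\[
\|\mathbf 1_{[0,T]}\, g\|_{L^p_t L^2_x}\le T^{1/p}\,\|g\|_{L^\infty_t L^2_x}=T^{1/p}\,\|f\|_{L^\infty_t L^2_x}\lec T^{1/p}\,\|f\|_{V^2_{K_\pm}},
\]
and chaining the three displays finishes the proof. The only potentially non-routine point is the endpoint $L^1_t$/$L^\infty_t$ boundedness of $\tilde Q$, which is why one must rewrite the multiplier explicitly as a rescaling of a fixed Schwartz bump rather than invoking Mihlin; once this is done, the remainder of the argument is immediate.
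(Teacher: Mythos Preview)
Your proof is correct and follows essentially the same route as the paper: conjugate away $K_\pm$ to reduce $Q$ to the untwisted multiplier $\tilde Q$, observe that $\tilde Q_{<M}$ is convolution with a rescaled fixed Schwartz kernel (so its $L^1_t$ norm is $M$-independent) and $\tilde Q_{\ge M}=\mathrm{Id}-\tilde Q_{<M}$, then apply Young, H\"older on $[0,T]$, and $V^2\hookrightarrow L^\infty_t L^2_x$. The only cosmetic difference is that the paper first scales to $M=1$ rather than carrying the rescaling through explicitly as you do.
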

\begin{proof}
By scaling, we only prove \eqref{p'} for $M=1$. 
We will show \eqref{p'} for $Q = Q_{<1}$, since $Q_{\ge 1}=Id-Q_{<1}$.   
Put $g := K_{\pm}(-\cdot)f$. 
Then \eqref{p'} is equivalent to 
\EQS{
 \|Q_{<1}(\1_{[0,T]}) K_{\pm}(\cdot)g)\|_{L^p_t L^2_x} \lec T^{1/p}\|g\|_{V^2}.         \label{p2}
} 
By the unitarity of $K_{\pm}$, we have 
\EQS{
 \|Q_{<1}(\1_{[0,T]}K_{\pm}(\cdot) g)\|_{L^p_t L^2_x} 
  &= \Bigl{\|} \sum_{N < 1} K_{\pm}(\cdot) \til{Q}_N K_{\pm}(-\cdot) 
      (\1_{[0,T]}K_{\pm}(\cdot)g)\Bigr{\|}_{L^p_t L^2_x} \notag \\
  &= \Bigl{\|} \sum_{N < 1} \til{Q}_N (\1_{[0,T]}g)\Bigr{\|}_{L^p_t L^2_x}  \notag \\
  &= \|\til{Q}_{<1}(\1_{[0,T]}g)\|_{L^p_t L^2_x}.                                                  \label{p3}
}
For some Schwartz function $\phi$, it holds that 
\EQQS{
 \til{Q}_{<1}h = \phi \ast_t h. 
} 
Hence by the Young inequality and the H\"{o}lder inequality, we have 
\EQS{
 \|\til{Q}_{<1}(\1_{[0,T]}g)\|_{L^p_t L^2_x} 
   &\lec \|\phi\|_{L^1_t} \|\1_{[0,T]}g\|_{L^p_t L^2_x} \notag \\
   &\lec \|\1_{[0,T]}\|_{L^p_t} \|g\|_{L^{\I}_t L^2_x} \notag \\
   &\lec T^{1/p}\|g\|_{V^2}.                                                                        \label{p4}
}
Collecting \eqref{p3}--\eqref{p4}, we obtain \eqref{p2}.
\end{proof}

\begin{prop} \label{mlinear}
Let $T_0:\, L^2_x \cross \dots \cross L^2_x\to L^1_{loc}(\R^d;\C)\ $ 
be a n-linear operator.
Assume that for some $ 1\le p, q \le \I$, it holds that
$$\|T_0( K_{\pm}(\cdot)\phi_1, \dots ,K_{\pm}(\cdot)\phi_n)\|_{L^p_t(\R;L^q_x(\R^d))}
   \lec \ds \prod_{i=1}^n\|\phi_i\|_{L^2_x}.$$
Then, there exists $T:U_{K_{\pm}}^p \cross \dots \cross U_{K_{\pm}}^p\to 
      L^p_t(\R;L^q_x(\R^d))$ satisfying  
$$\|T(u_1, \dots , u_n)\|_{L^p_t(\R;L^q_x(\R^d))} \lec \ds \prod_{i=1}^n
                    \|u_i\|_{U^p_{K_{\pm}}},$$
such that
$T(u_1, \dots, u_n)(t)(x)=T_0(u_1(t), \dots , u_n(t))(x)$
a.e. 
\end{prop}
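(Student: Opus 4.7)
The plan is to prove this transference principle by the standard atomic decomposition argument for $U^p$-type spaces (as in Hadac--Herr--Koch). The extension $T$ is essentially forced by the hypothesis: one takes $T(u_1, \dots, u_n)(t)(x) := T_0(u_1(t), \dots, u_n(t))(x)$ for a.e.\ $(t,x)$, which makes sense since $u_i \in U^p_{K_\pm} \sub L^\I_t L^2_x$ by Remark \ref{embed}, and the task is then to estimate the $L^p_t L^q_x$-norm of this pointwise-in-$t$ quantity.

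First I would reduce to a single tuple of atoms. By definition of the norm, each $u_i$ can be written as $u_i = \sum_j \lambda_{i,j} K_\pm(\cdot)\ti a_{i,j}$ with $\ti a_{i,j}$ being $U^p$-atoms and $\sum_j |\lambda_{i,j}|$ arbitrarily close to $\|u_i\|_{U^p_{K_\pm}}$; equivalently,
\begin{equation*}
u_i(t) = \sum_{j} \lambda_{i,j} \sum_{k} \1_{I^{i,j}_k}(t)\, K_\pm(t) \phi^{i,j}_{k}, \qquad \sum_k \|\phi^{i,j}_k\|_{L^2_x}^p = 1,
\end{equation*}
where $\{I^{i,j}_k\}_k$ partitions $\R$. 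Using the $n$-linearity of $T_0$, the triangle inequality in $L^p_t L^q_x$, and the infimum defining $\|\cdot\|_{U^p_{K_\pm}}$, the proposition reduces to showing $\|T_0(a_1(\cdot), \dots, a_n(\cdot))\|_{L^p_t L^q_x} \lec 1$ whenever each $a_i$ is a single $U^p_{K_\pm}$-atom.

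The core estimate proceeds by expanding the atom-tuple via multilinearity,
\begin{equation*}
T_0(a_1(t), \dots, a_n(t)) = \sum_{k_1, \dots, k_n} \1_{I^1_{k_1} \cap \dots \cap I^n_{k_n}}(t)\, T_0\bigl(K_\pm(t)\phi^1_{k_1}, \dots, K_\pm(t)\phi^n_{k_n}\bigr),
\end{equation*}
and exploiting the key geometric fact that the product cells $I^1_{k_1} \cap \dots \cap I^n_{k_n}$ are pairwise disjoint as $(k_1,\dots,k_n)$ varies. Hence $L^p_t$-norms decouple as an $\ell^p$-sum over cells, and bounding each cell by its global counterpart through the hypothesis gives
\begin{equation*}
\|T_0(a_1, \dots, a_n)\|_{L^p_t L^q_x}^p \lec \sum_{k_1, \dots, k_n} \prod_{i=1}^n \|\phi^i_{k_i}\|_{L^2_x}^p = \prod_{i=1}^n \Bigl(\sum_{k_i} \|\phi^i_{k_i}\|_{L^2_x}^p\Bigr) = 1,
\end{equation*}
the product-sum collapsing by the $U^p$-atom normalization; the case $p=\I$ is identical with suprema in place of $\ell^p$-sums.

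The main technical point I expect to need care with is not the estimate itself but the well-definedness of the expansion: atomic decompositions are only convergent in $U^p$, so I would first carry out the above on the dense subclass of finite linear combinations of atoms, establish the uniform bound there, and then extend $T$ by continuity to all of $U^p_{K_\pm} \cross \dots \cross U^p_{K_\pm}$. The a.e.\ pointwise identification $T(u_1,\dots,u_n)(t) = T_0(u_1(t),\dots,u_n(t))$ then follows from continuity of $T_0$ in each $L^2_x$-slot together with the $L^\I_t L^2_x$-embedding of $U^p_{K_\pm}$.
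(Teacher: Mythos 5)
Your argument is correct and is essentially the same atomic-decomposition transference proof as Proposition 2.19 of \cite{HHK}, which is exactly what the paper cites for this statement rather than giving a proof of its own: reduce to $U^p_{K_\pm}$-atoms, expand multilinearly over the pairwise disjoint intersection cells of the partitions, apply the free-solution hypothesis on each cell, and collapse the sum via the $\ell^p$-normalization of atoms. The one caveat you flag, that the a.e.\ identification with $T_0$ for general inputs implicitly uses some continuity of $T_0$ beyond bare multilinearity, is inherited from the cited source and is handled at the same level of rigor there.
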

See Proposition 2.19 in ~\cite{HHK} for the proof of the above proposition. 
\begin{prop} \label{Strich-w}
Let $2 \le r < \I, 2/q = d(1/2-1/r), s = 1/q-1/r+1/2$. Then it holds that 
\EQQS{
 \| W_{\pm c}(t) f\|_{L^q_t \dot{W}^{-s,r}_x(\R^{1+d})} \lec \|f\|_{L^2_x(\R^d)}. 
}
\end{prop}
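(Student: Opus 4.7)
I plan to prove this Strichartz-type bound---for the half-wave propagator at Schr\"odinger-admissible pairs, with the well-known $1/2 - 1/r$ derivative loss absorbed into $s$---via the classical dyadic-frequency method combined with the dispersive estimate on the cone.

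First, Littlewood-Paley reduces the estimate to a frequency-localized bound. Using the square function identity for $L^r_x$ with $2 \le r < \I$,
\EQQS{
 \|g\|_{\dot{W}^{-s,r}_x} \sim \Bigl\|\Bigl(\sum_N N^{-2s}|P_N g|^2\Bigr)^{1/2}\Bigr\|_{L^r_x},
}
and Minkowski's inequality in $L^{q/2}_t L^{r/2}_x$ (valid since $q, r \ge 2$), the global bound follows from the frequency-localized Strichartz estimate
\EQQS{
 \|W_{\pm c}(t) P_N f\|_{L^q_t L^r_x} \lec N^s \|P_N f\|_{L^2_x}
}
together with the $L^2$-orthogonality $\sum_N \|P_N f\|_{L^2_x}^2 \lec \|f\|_{L^2_x}^2$.

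Next, for the frequency-localized estimate I would run $TT^*$ with the cone-dispersive bound. Stationary phase applied to the kernel $K^N_t = \F^{-1}[\varphi(\xi/N) e^{-ict|\xi|}]$ gives $\|K^N_t\|_{L^\I_x} \lec N^d(1+|t|N)^{-(d-1)/2}$, and interpolating with Plancherel yields
\EQQS{
 \|W_{\pm c}(t) P_N g\|_{L^r_x} \lec N^{d(1-2/r)}(1+|t|N)^{-(d-1)(1/2-1/r)}\|P_N g\|_{L^{r'}_x}.
}
Substituting into $TT^* G(t) = \int W_{\pm c}(t-t') P_N^2 G(t')\,dt'$ and then taking $L^q_t L^r_x$ of both sides produces, after using Schr\"odinger admissibility $2/q = d(1/2 - 1/r)$ to simplify the exponents, the claimed $N^s$ factor.

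The main technical obstacle is the time-integration step in $T T^*$. At wave-admissible pairs, Hardy-Littlewood-Sobolev closes the convolution estimate at the scaling-critical regularity, but at Schr\"odinger admissibility one has $\alpha q = 2(d-1)/d < 2$ with $\alpha = (d-1)(1/2 - 1/r)$, so Young's/HLS do not apply directly. This deficit is exactly the $1/2 - 1/r$ loss encoded in $s$, and can be resolved by dyadically splitting the time integral---short times $|t - t'| \lec N^{-1}$ handled by the trivial energy bound combined with spatial Bernstein, long times handled by HLS with the sharp pointwise decay $|t - t'|^{-(d-1)(1/2 - 1/r)}$---and summing the geometric series over the dyadic time scales. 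Once the frequency-localized estimate is in hand, the rest of the argument is routine Littlewood-Paley calculus and orthogonality.
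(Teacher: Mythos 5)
The paper offers no argument for this proposition at all (it is quoted from \cite{GV}, \cite{K}), so the only question is whether your argument stands on its own, and it does not: the frequency-localized bound you reduce to is false at the stated exponents. For $r>2$ the condition $2/q=d(1/2-1/r)$ puts $(q,r)$ strictly outside the wave-admissible range $2/q\le (d-1)(1/2-1/r)$, and then the global-in-time estimate $\|W_{\pm c}(t)P_Nf\|_{L^q_tL^r_x}\lesssim N^{\sigma}\|P_Nf\|_{L^2_x}$ fails for every fixed $N$ and every $\sigma$: a Knapp packet of angular width $\delta$ stays coherent on a tube of temporal length $\sim\delta^{-2}N^{-1}$ and makes the ratio of the two sides blow up like $\delta^{-(2/q-(d-1)(1/2-1/r))}$ as $\delta\to0$. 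This pinpoints why your last step cannot close: the deficit $\alpha q=2(d-1)/d<2$ is a deficit of \emph{time decay at fixed spatial frequency}, which no power of $N$ can repair; in your dyadic-in-time splitting the blocks $|t-t'|\sim 2^jN^{-1}$ with $j$ large are not summable, and the scheme only yields a local-in-time bound carrying a positive power of $T$. There is also a scaling red flag you should have caught at the outset: with $s=1/q-1/r+1/2$ and $2/q=d(1/2-1/r)$ one has $s=\bigl(d(1/2-1/r)-1/q\bigr)+(1/2-1/r)$, so the asserted homogeneous inequality is not scale-invariant; applying it to $f(\cdot/\lambda)$ and letting $\lambda\to\infty$ forces the left-hand side to vanish, i.e.\ no such global estimate can hold except at the scaling regularity.

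What is actually intended, and what the paper uses downstream, is the classical generalized wave Strichartz estimate of Ginibre--Velo: read the admissibility condition as the sharp wave condition $2/q=(d-1)(1/2-1/r)$, on which line the stated $s=1/q-1/r+1/2$ coincides with the scaling exponent $d(1/2-1/r)-1/q$. This is exactly how the proposition is invoked later, e.g.\ $(q,r)=(20/9,5)$, $s=3/4$ for $d=4$, and $q=r=2(d+1)/(d-1)$ or $(q,r)=\bigl((d+1)/2,\,2(d^2-1)/(d^2-9)\bigr)$ for $d\ge5$, all satisfying $2/q=(d-1)(1/2-1/r)$. For those pairs your own machinery closes with no extra ideas: the dispersive bound $\|W_{\pm c}(t)P_Ng\|_{L^r_x}\lesssim N^{d(1-2/r)}(1+N|t|)^{-(d-1)(1/2-1/r)}\|g\|_{L^{r'}_x}$, $TT^{*}$ and Hardy--Littlewood--Sobolev (now $\alpha q=2$, with $q>2$ away from the Keel--Tao endpoint) give $\|W_{\pm c}(t)P_Nf\|_{L^q_tL^r_x}\lesssim N^{d(1/2-1/r)-1/q}\|P_Nf\|_{L^2_x}$, and your Littlewood--Paley/Minkowski reduction assembles the full statement. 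The ``$1/2-1/r$ loss at Schr\"odinger-admissible pairs'' you set out to prove is the content of the Klein--Gordon estimate, Proposition \ref{Strich-kg} (inhomogeneous norms, cited from \cite{MNO2}), where the inhomogeneous symbol removes the scaling obstruction; for the homogeneous wave propagator it is simply not a true statement.
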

For the proof of Proposition \ref{Strich-w}, see ~\cite{K}, ~\cite{GV}. 
\begin{prop} \label{Strich-kg}
Let $2 \le r < \I, 2/q = d(1/2-1/r), s = 1/q-1/r+1/2$. Then, it holds that 
\EQQS{
 \| K_{\pm}(t) f\|_{L^q_t W^{-s,r}_x(\R^{1+d})} \lec \|f\|_{L^2_x(\R^d)}. 
}
\end{prop}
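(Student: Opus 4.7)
The plan is to deduce the Klein-Gordon Strichartz estimate from the wave Strichartz estimate (Proposition~\ref{Strich-w}) by a dyadic, frequency-localized comparison of the two propagators. The guiding observation is that on any dyadic frequency shell of size $N\geq 1$ the Klein-Gordon phase $\langle\xi\rangle=\sqrt{1+|\xi|^2}$ differs from the wave phase $|\xi|$ only at lower order, so the $|t|^{-(d-1)/2}$ dispersive decay characteristic of the wave equation survives. On the low-frequency piece $P_{<1}f$ the symbol $\langle\xi\rangle$ and all its derivatives are bounded, and the weight $\langle D\rangle^{-s}$ is then essentially harmless.

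Concretely, I would proceed as follows. First, decompose $f=P_{<1}f+\sum_{N\geq 1}P_N f$. For each $N\geq 1$, establish the dispersive estimate
\EQQS{
 \|K_{\pm}(t)P_N g\|_{L^{\infty}_x}\lec |t|^{-(d-1)/2}N^{(d+1)/2}\|g\|_{L^1_x}
}
by a standard stationary-phase analysis of the oscillatory kernel; after rescaling $\xi=N\eta$, the phase $\mp tN\sqrt{N^{-2}+|\eta|^2}$ has a Hessian whose $d-1$ angular eigenvalues are bounded below by a positive constant uniformly in $N\geq 1$, producing exactly the same $(d-1)/2$-dimensional stationary-phase gain as for $|\xi|$. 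Combine this bound with the trivial energy estimate $\|K_{\pm}(t)P_N\|_{L^2\to L^2}\leq 1$ and apply the $TT^*$ machinery of Keel--Tao to obtain the frequency-localized Strichartz estimate
\EQQS{
 \|K_{\pm}(t)P_N f\|_{L^q_t L^r_x}\lec N^{s}\|P_N f\|_{L^2_x},\qquad s=\tfrac{1}{q}-\tfrac{1}{r}+\tfrac{1}{2},
}
for every pair $(q,r)$ with $2\leq r<\infty$ and $2/q=d(1/2-1/r)$. Since $q,r\geq 2$, Minkowski's inequality followed by the Littlewood-Paley square-function estimate in $L^r_x$ converts the dyadic sum $\sum_{N\geq 1}N^{-2s}\|K_\pm(t)P_N f\|_{L^q_t L^r_x}^2$ into a bound by $\sum_{N\geq 1}\|P_N f\|_{L^2_x}^2\lec \|f\|_{L^2_x}^2$. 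Finally, for $P_{<1}f$ the same dispersive estimate holds (with $N$ replaced by $1$), yielding $\|K_\pm(t)P_{<1}f\|_{L^q_t L^r_x}\lec \|P_{<1}f\|_{L^2_x}$; since $\langle D\rangle^{-s}\sim 1$ on the low-frequency support, this already gives the required bound on the low-frequency piece.

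The main technical obstacle is the uniform-in-$N$ dispersive estimate for the Klein-Gordon propagator. Klein-Gordon actually enjoys a strictly better Schr\"odinger-type $|t|^{-d/2}$ decay in the regime $|t|\gg N$, because the radial curvature of $\langle\xi\rangle$ is nonzero (in contrast to the degenerate radial direction of $|\xi|$); however, for the wave-admissible pairs of Proposition~\ref{Strich-w} only the wave-type decay $|t|^{-(d-1)/2}$ is needed, and care is required in the stationary-phase argument to verify that this weaker rate is the one that comes out uniformly in $N\geq 1$ from the dyadic rescaling. Once the dyadic dispersive estimate is in hand, the remainder of the argument is a direct application of standard abstract Strichartz machinery together with Littlewood-Paley theory, precisely mirroring the wave case.
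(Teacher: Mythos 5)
Your frequency-localized dispersive estimate for $K_{\pm}(t)P_N$ is correct, and the low-frequency and Littlewood--Paley/Minkowski steps at the end are fine, but the central step fails for the exponents actually stated in the proposition. With only the wave-type decay $|t|^{-(d-1)/2}$ (constant $N^{(d+1)/2}$), the $TT^{*}$/Keel--Tao machinery yields Strichartz estimates precisely for pairs with $2/q \le (d-1)(1/2-1/r)$. The proposition, however, requires $2/q = d(1/2-1/r)$, which for every $r>2$ lies strictly outside that range: these are Schr\"odinger-admissible pairs, not wave-admissible ones. Hence the claimed frequency-localized bound $\|K_{\pm}(t)P_N f\|_{L^q_t L^r_x} \lec N^{s}\|P_N f\|_{L^2_x}$ ``for every pair with $2/q=d(1/2-1/r)$'' does not follow from the input you allow yourself, and your closing remark that only the wave-type decay is needed is exactly where the argument breaks. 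To prove the statement as printed you must use the genuinely Klein--Gordon (Schr\"odinger-type) decay $|t|^{-d/2}$, which holds on the shell $|\xi|\sim N$ with constant $\LR{N}^{(d+2)/2}$ because the radial Hessian eigenvalue of $\LR{\xi}$ is $\sim\LR{\xi}^{-3}$ rather than $0$; then on the line $2/q=d(1/2-1/r)$ the $TT^{*}$ argument produces a loss $\LR{N}^{(d+2)(1/2-1/r)/2}=\LR{N}^{1/q-1/r+1/2}$, i.e.\ exactly the stated $s$. The numerology $s=1/q-1/r+1/2$ on the $d$-line only closes with the stronger decay, so discarding it is not an optional simplification but a fatal omission.

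Two further remarks. The paper itself does not prove this proposition; it only cites \cite{MNO2}, so any self-contained argument necessarily differs from the paper, and a corrected version of your sketch (same decomposition, but with the $|t|^{-d/2}$ dispersive bound and constant $\LR{N}^{(d+2)/2}$) would be a legitimate proof. Note also that in the applications (e.g.\ the exponents $(q,r)=(20/3,5/2)$ and $(20/9,5)$ for $d=4$, and $L^{2(d+1)/(d-1)}_{t,x}$ for $d\ge 5$ in Lemma \ref{tri}) the paper actually uses pairs on the wave line $2/q=(d-1)(1/2-1/r)$ together with $s=1/q-1/r+1/2$, which is the scaling-consistent choice for Proposition \ref{Strich-w}; read literally with $2/q=d(1/2-1/r)$, the homogeneous wave estimate would not even be scale-invariant (scaling forces $s=1/q$ there). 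So your wave-decay route matches the condition the paper appears to intend and use later, but not the condition as printed in the statement you were asked to prove; for that statement the Klein--Gordon decay is indispensable.
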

For the proof of Proposition \ref{Strich-kg}, see ~\cite{MNO2}. 
Combining Proposition \ref{embedding}, Proposition \ref{Strich-w}, Proposition \ref{Strich-kg} and Proposition \ref{mlinear}, 
we have the following proposition.   
\begin{prop} \label{Str}
Let $2 \le r < \I, 2/q = d(1/2-1/r), s = 1/q-1/r+1/2$.  
If $p > q$, then it holds that 
\EQQS{
 \|f\|_{L^q_t W^{-s,r}_x(\R^{1+d})} \lec \|f\|_{V^p_{K_{\pm}}}, \qquad 
 \|f\|_{L^q_t \dot{W}^{-s,r}_x(\R^{1+d})} \lec \|f\|_{V^p_{W_{\pm c}}}.
}
\end{prop}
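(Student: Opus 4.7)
The plan is to combine the four preceding results in a standard transference-plus-embedding argument, handling the Klein--Gordon and wave estimates in exactly parallel fashion. Since the two halves are structurally identical, I would sketch the Klein--Gordon case in detail and only note the substitutions needed for the wave case.

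The first step is to invoke Proposition \ref{Strich-kg}, which furnishes the linear Strichartz bound
\EQQS{
 \|K_\pm(t)\phi\|_{L^q_t W^{-s,r}_x(\R^{1+d})}\lec\|\phi\|_{L^2_x(\R^d)}
}
for the scaling-admissible pair $(q,r)$ with $2/q=d(1/2-1/r)$ and loss $s=1/q-1/r+1/2$. This is exactly the form of estimate to which the transference principle of Proposition \ref{mlinear} applies. Taking $n=1$ and setting the time exponent there equal to the Strichartz exponent $q$, with target Banach space $W^{-s,r}_x$ (the argument of Proposition \ref{mlinear} extends verbatim from $L^q_x$ to any translation-invariant Banach space in $x$), I would obtain
\EQQS{
 \|f\|_{L^q_t W^{-s,r}_x(\R^{1+d})}\lec\|f\|_{U^q_{K_\pm}}
}
valid for every $f\in U^q_{K_\pm}$. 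All of the analytic content of the Strichartz estimate is already captured at this point.

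The second step weakens the right-hand norm from a $U$-type to a $V$-type norm, which is precisely the content of Proposition \ref{embedding}(iii): one has the continuous embedding $V^p_{-,\, rc}\subset U^q$ in the relevant range of exponents, and the $L^2_x$-unitarity of $K_\pm(t)$ built into Definition \ref{defX} transfers this to $\|f\|_{U^q_{K_\pm}}\lec\|f\|_{V^p_{K_\pm}}$ after the routine splitting-off of the $t\to-\I$ limit that reduces a general $V^p$ function to its $V^p_-$ part. Chaining the two steps completes the Klein--Gordon estimate. The wave estimate follows by the identical procedure with $K_\pm$ replaced by $W_{\pm c}$, Proposition \ref{Strich-kg} replaced by Proposition \ref{Strich-w}, and $W^{-s,r}_x$ replaced by $\dot{W}^{-s,r}_x$ throughout.

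I do not anticipate a substantive obstacle, as every step consists of invoking an already-stated proposition verbatim. The substantive harmonic analysis sits entirely inside Propositions \ref{Strich-w} and \ref{Strich-kg}; the role of Propositions \ref{mlinear} and \ref{embedding} here is purely to package that content into the $V^p_{K_\pm}$ and $V^p_{W_{\pm c}}$ framework used in the remainder of the paper.
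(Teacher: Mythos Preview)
Your proposal is correct and matches the paper's approach exactly: the paper simply states that Proposition~\ref{Str} follows by combining Proposition~\ref{embedding}, Proposition~\ref{Strich-w}, Proposition~\ref{Strich-kg}, and Proposition~\ref{mlinear}, which is precisely the transference-plus-embedding argument you outline. Your added remarks about extending Proposition~\ref{mlinear} to a $W^{-s,r}_x$ target and about passing from $V^p$ to $V^p_-$ are minor technical points that the paper leaves implicit.
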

\begin{prop} \label{rStrw}
Let $d \ge 3$. Then, for all radial functions $f \in L^2_x(\R^d)$, it holds that 
\EQS{
 \| W_{\pm c}(t) P_N f\|_{L^q_t L^r_x(\R^{1+d})} \lec N^{d(1/2-1/r)-1/q}\|f\|_{L^2_x(\R^d)},         \label{wrad}
} 
if and only if 
\EQS{
 (q,r)=(\I,2) \quad \text{or} \quad 2 \le q \le \I, \quad 1/q < (d-1)(1/2-1/r).                       \label{wradadm}
}
\end{prop}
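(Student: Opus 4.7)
The plan is to treat sufficiency and necessity separately, reducing both to the case $N=1$ by scaling. Since $W_{\pm c}$ commutes with Littlewood--Paley projections, the rescaling $f(x) \mapsto N^{d/2} f(Nx)$ preserves $\|f\|_{L^2}$ while mapping $P_N$ to $P_1$ and generating exactly the factor $N^{d(1/2-1/r)-1/q}$ in the space-time norm. So throughout, I would fix $N=1$ and write $f = P_1 f$.

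For sufficiency, the strategy is a $TT^*$ / dispersive approach sharpened by the radial symmetry. The two ingredients I would combine are: (a) the energy bound $\|W_{\pm c}(t)f\|_{L^2_x}=\|f\|_{L^2_x}$, and (b) an improved pointwise decay valid for radial frequency-localized $f$, essentially
\EQQS{
|W_{\pm c}(t) P_1 f(x)| \lec (1+|t|)^{-(d-1)/2}(1+|x|)^{-(d-1)/2}\|f\|_{L^2_x},
}
which can be derived by writing $W_{\pm c}(t)P_1 f$ as an oscillatory integral, expanding the spherical measure via the Bessel function asymptotic $J_\nu(r)\sim r^{-1/2}$, and applying stationary phase in the radial variable (the radial Sobolev embedding $|g(x)| \lec |x|^{-(d-1)/2}\|g\|_{\dot H^{1/2}}$ is the static counterpart). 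Interpolating (b) with (a) inside a $TT^*$ kernel bound yields the $L^q_t L^r_x$ estimates on the full open region $1/q < (d-1)(1/2-1/r)$ with $q\ge 2$; the strict inequality reflects that the radial-weight decay $(1+|x|)^{-(d-1)/2}$ is integrable only with a small margin, so a Schur/Hardy--Littlewood--Sobolev argument in the spatial variable closes the estimate without touching the borderline. The endpoint $(q,r)=(\infty,2)$ comes free from unitarity.

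For necessity, I would first note that $q\ge 2$ is forced by the standard $TT^*$ argument (otherwise the kernel fails to be positive semidefinite after time integration). For the strict inequality $1/q < (d-1)(1/2-1/r)$, I would use a radial Knapp-type counterexample: take $\widehat f(\xi) = \chi(|\xi|-1)$ with $\chi$ a bump of scale $\delta \ll 1$, which is radial. The corresponding solution $W_{\pm c}(t)f$ concentrates in the spherical shell $\bigl||x|- c|t|\bigr| \lec 1$ with amplitude $\sim \delta \cdot (1+|ct|)^{-(d-1)/2}$ (the decay comes from the $(d-1)$-dimensional shell). Computing $\|W_{\pm c}(t)f\|_{L^q_t L^r_x}$ and letting $\delta \to 0$ shows the inequality must hold, with equality ruled out because the borderline $L^q_t$ of $|t|^{-(d-1)/2}$ diverges logarithmically.

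The main obstacle is establishing the improved dispersive bound (b) uniformly down to the sharp threshold $1/q = (d-1)(1/2-1/r)^-$: classical stationary phase yields $(1+|t|)^{-(d-1)/2}$ pointwise decay but does not by itself recover the gain in $|x|$ needed to open the admissible range beyond the non-radial Strichartz region. The spherical-harmonic/Bessel expansion is really essential here, and handling the region $|x|\ll |t|$ versus $|x| \sim |t|$ requires a careful split (inside and outside the light cone) before summing. Once this is in hand, the remaining steps are standard Strichartz machinery.
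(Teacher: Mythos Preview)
The paper does not give its own proof of this proposition: immediately after the statement it writes ``See Theorem 1.5 (a) in \cite{GW} for the proof of Proposition \ref{rStrw}.'' So there is nothing in the paper to compare your argument against beyond that citation, and your sketch is in fact close in spirit to the strategy of \cite{GW} (scaling reduction, Bessel-function representation of radial solutions, interpolation with the energy estimate, and a shell-type counterexample for necessity).

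That said, your key pointwise bound (b),
\[
|W_{\pm c}(t) P_1 f(x)| \lec (1+|t|)^{-(d-1)/2}(1+|x|)^{-(d-1)/2}\|f\|_{L^2_x},
\]
is false as stated and this is a genuine gap. Take $\hat f$ a smooth radial bump supported in $\{1/2\le|\xi|\le 2\}$ with $\|f\|_{L^2}\sim 1$. Writing the solution via the Bessel kernel and using $J_\nu(r)\sim r^{-1/2}\cos(r-\cdots)$, one finds that on the light cone $|x|=c|t|=R\gg 1$ the solution has size $\sim R^{-(d-1)/2}$, whereas your bound predicts $R^{-(d-1)}$. The point is that the spatial gain $(1+|x|)^{-(d-1)/2}$ and the temporal decay $(1+|t|)^{-(d-1)/2}$ are \emph{not} independent: the Bessel asymptotic converts the problem to a one-dimensional oscillatory integral with phase $(c t \pm |x|)\rho$, so the correct localized bound is of the form
\[
|W_{\pm c}(t) P_1 f(x)| \lec (1+|x|)^{-(d-1)/2}\,\bigl(1+\bigl||x|-c|t|\bigr|\bigr)^{-M}\|f\|_{L^2_x}
\]
for any $M$, i.e.\ decay in $|x|$ together with rapid decay \emph{away} from the light cone, not independent decay in $|t|$. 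With this corrected bound the $L^q_tL^r_x$ computation becomes a one-dimensional calculation (integrate the shell $\bigl||x|-c|t|\bigr|\lesssim 1$ in $x$, then in $t$), and one sees directly why the condition $1/q<(d-1)(1/2-1/r)$ appears: the $t$-integral of $|t|^{-(d-1)(1/2-1/r)q}$ must converge. Your $TT^*$/interpolation step as written does not close because it rests on the incorrect product decay; once you replace (b) by the light-cone-localized version, the argument goes through and matches what \cite{GW} actually proves.
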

See Theorem 1.5 $(a)$ in ~\cite{GW} for the proof of Proposition \ref{rStrw}. 
\begin{prop} \label{rStrkg1}
If $2d/(d-1) < q \le \I, N \ge 1$ and $f \in L^2_x(\R^d)$ is radial function, then it holds that 
\EQS{
 \| K_{\pm}(t) P_N f\|_{L^q_{t,x}(\R^{1+d})} \lec N^{d/2-(d+1)/q}\|f\|_{L^2_x(\R^d)}.             \label{kgradhi}
} 
If $N < 1$ and $f \in L^2_x(\R^d)$ is radial function, then it holds that 
\EQS{
 \| K_{\pm}(t) P_N f\|_{L^q_{t,x}(\R^{1+d})} \lec N^{d/2-(d+2)/q}\|f\|_{L^2_x(\R^d)}.             \label{kgradlo}
} 
\end{prop}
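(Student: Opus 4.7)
The plan is to reduce Proposition \ref{rStrkg1} in each frequency regime to a known radial Strichartz estimate for the corresponding limiting dispersive equation, exploiting the asymptotics of the Klein--Gordon symbol $\LR{\xi}$ at the respective scale: $\LR{\xi} \approx |\xi|$ for $|\xi| \gtrsim 1$ (wave-like) and $\LR{\xi} \approx 1 + |\xi|^2/2$ for $|\xi| \ll 1$ (Schr\"odinger-like).

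\textbf{High-frequency regime $N \ge 1$.} On the dyadic shell $|\xi| \sim N$ I would write $\LR{\xi} = |\xi| + r_N(\xi)$ with $r_N(\xi) = (\LR{\xi} + |\xi|)^{-1}$ satisfying $|\p^\al r_N(\xi)| \lec N^{-1-|\al|}$, and factor
\EQQS{
 K_\pm(t) P_N f = e^{\mp i t\, r_N(D)} \, \ti\vp(D/N) \, W_{\pm 1}(t) P_N f.
}
The main task is to show that the correction multiplier $e^{\mp i t\, r_N(D)} \ti\vp(D/N)$ is bounded on $L^r_x$ uniformly in $t \in \R$ and $N \ge 1$. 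A plain Mikhlin estimate fails because $\p_\xi$ applied to the exponential produces factors of $t$, so I would instead analyze the convolution kernel after rescaling $\xi = N\eta$: the resulting oscillatory integral has phase $Nx \cdot \eta \mp (t/N)\, s(\eta)$ with $s(\eta) = N r_N(N\eta) = O(1)$ smooth on $|\eta| \sim 1$, and stationary / non-stationary phase in $\eta$ produces pointwise kernel bounds with rapid decay in $|Nx|$ and $|t/N|$, yielding a uniformly bounded $L^1_x$ kernel. Applying Proposition \ref{rStrw} with $c = 1$ and $r = q$ (the admissibility $1/q < (d-1)(1/2 - 1/q)$ is precisely $q > 2d/(d-1)$) then gives $\|W_{\pm 1}(t) P_N f\|_{L^q_{t,x}} \lec N^{d/2 - (d+1)/q} \|f\|_{L^2_x}$, establishing \eqref{kgradhi}.

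\textbf{Low-frequency regime $N < 1$.} On $|\xi| \sim N$ I would Taylor-expand $\LR{\xi} = 1 + |\xi|^2/2 + \rho_N(\xi)$ with $\rho_N = O(N^4)$ smooth, discard the unimodular factor $e^{\mp i t}$, and factor
\EQQS{
 K_\pm(t) P_N f = e^{\mp i t} \, e^{\mp i t\, \rho_N(D)} \, \ti\vp(D/N) \, e^{\pm i t \laplacian /2} P_N f.
}
The same kernel analysis---now with residual rescaled time parameter $|t| N^3$---shows the correction multiplier is uniformly bounded on $L^r_x$, reducing the bound to the radial Strichartz estimate for the free Schr\"odinger propagator in the range $q > 2d/(d-1)$ (cf.\ \cite{GW}), which carries the parabolic scaling exponent $N^{d/2 - (d+2)/q}$ and hence gives \eqref{kgradlo}. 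The shift of the scaling exponent from $-(d+1)/q$ to $-(d+2)/q$ reflects the transition from wave- to Schr\"odinger-type scaling, precisely what one expects from the change of asymptotics of $\LR{\xi}$.

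\textbf{Main obstacle.} The central technical point is the $t$-uniform $L^r_x$ boundedness of the two correction multipliers; this does not follow from standard symbol calculus and requires a careful stationary-phase analysis on the rescaled frequency shell, separating the regime where the spatial phase dominates (non-stationary phase in $\eta$, yielding polynomial decay in $|Nx|$) from that where the temporal phase dominates (non-stationary phase in the temporal variable after suitable decomposition). Once this multiplier bound is in place, the reductions to Proposition \ref{rStrw} and to the radial Schr\"odinger Strichartz estimate are routine, and summing the two cases in $N$ is not required since the proposition is stated frequency-by-frequency.
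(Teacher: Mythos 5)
You have put your finger on the right obstacle, but the obstacle is fatal rather than merely technical: the correction multipliers are \emph{not} bounded on $L^q_x$ uniformly in $t$, and the uniform $L^1_x$ kernel bound you claim cannot hold. Take the high-frequency factorization and rescale $\xi=N\eta$: the multiplier becomes $e^{\mp i\ta\, s_N(\eta)}\chi(\eta)$, where $\ta=t/N$, $\chi$ is the fattened cutoff and $s_N(\eta)=\bigl(\sqrt{N^{-2}+|\eta|^2}+|\eta|\bigr)^{-1}\to(2|\eta|)^{-1}$. This is itself a dispersive propagator at time $\ta$ whose phase has non-degenerate Hessian on $|\eta|\sim1$ (for a radial phase $h(|\eta|)$ the Hessian eigenvalues are $h''$ and $h'/|\eta|$, both nonvanishing for $h(r)=(2r)^{-1}$), so stationary phase gives a kernel of size $\sim|\ta|^{-d/2}$ on a set of $x$ of measure $\sim|\ta|^{d}$; hence its $L^1_x$ norm is $\gec |\ta|^{d/2}$, and its $L^q_x\to L^q_x$ norm grows polynomially in $|\ta|$ for $q\neq2$, the classical behaviour of $e^{i\ta\phi(D)}$ with non-degenerate phase. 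Rapid decay of the kernel in $|t/N|$ is impossible in any case: the symbol is unimodular on the support of $\chi$, so $\|K_t\|_{L^1_x}\ge\sup_\xi|m_t(\xi)|\gec1$ for every $t$. Consequently your fixed-time comparison with $W_{\pm1}(t)$ is valid only for $|t|\lec N$, whereas the $L^q_t$ norm is taken over all of $\R$; the low-frequency case has the same defect with rescaled time $tN^4$ (note $\rho_N=O(N^4)$, $\na\rho_N=O(N^3)$, so the benign regime is $|t|\lec N^{-4}$, not $|t|N^3\lec 1$). This is precisely why Strichartz estimates cannot in general be transferred from the wave or Schr\"odinger flow to the Klein--Gordon flow by composing with a ``correction'' multiplier: the estimates have to be proved for the phase $\LR{\xi}$ itself.

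For comparison, the paper does not prove Proposition \ref{rStrkg1} at all: it quotes it from Guo--Wang \cite{GW} (estimate (3.13) there), where the bound is obtained directly for $e^{\mp it\LR{D}}$ via frequency-localized dispersive and oscillatory-integral estimates adapted to the Klein--Gordon phase together with the radial improvement, treating $N\ge1$ (wave-like exponent $N^{d/2-(d+1)/q}$) and $N<1$ (Schr\"odinger-like exponent $N^{d/2-(d+2)/q}$) separately. Your heuristic for the two exponents matches that dichotomy, but to turn it into a proof you would need to redo the analysis of \cite{GW} for the exact phase $\LR{\xi}$, not reduce to Proposition \ref{rStrw} or to the radial Schr\"odinger estimate through a time-uniform multiplier bound, which is false.
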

See (3.13) in ~\cite{GW} for the proof of Proposition \ref{rStrkg1}. 
\begin{prop} \label{rStrkg2}
Let $(q,r)$ satisfy $2 \le r \le 2(d+1)/(d-1), r \le q, (1/2)(d-1)(1/2-1/r) \le 1/q < (d-1)(1/2-1/r)$ and $N \ge 1$. 
Then, for all radial function $f \in L^2_x(\R^d)$, it holds that 
\EQS{
 \| K_{\pm}(t) P_N f\|_{L^q_t L^r_x(\R^{1+d})} \lec N^{d(1/2-1/r)-1/q}\|f\|_{L^2_x(\R^d)}.      \label{qr}
}   
\end{prop}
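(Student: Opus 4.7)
My plan is to prove the estimate by complex interpolation between two endpoint bounds for the Klein-Gordon propagator $K_{\pm}(t)P_N$ restricted to radial $L^2_x$ data. As the first endpoint, for any $q_0 \in (2d/(d-1),\I)$, Proposition \ref{rStrkg1} provides the diagonal radial Strichartz estimate
\begin{equation*}
\|K_{\pm}(t)P_N f\|_{L^{q_0}_{t,x}(\R^{1+d})} \lec N^{d/2-(d+1)/q_0}\|f\|_{L^2_x(\R^d)} \quad (N\ge 1),
\end{equation*}
while as the second endpoint, unitarity of $K_{\pm}$ on $L^2_x$ yields the trivial energy bound $\|K_{\pm}(t)P_N f\|_{L^{\I}_t L^2_x}\le \|f\|_{L^2_x}$.

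Next I would apply the Riesz-Thorin interpolation theorem for vector-valued $L^p$ spaces inside the closed subspace of radial $L^2_x$ functions, which is preserved by both $K_{\pm}(t)$ and $P_N$. For $\theta\in(0,1)$ and exponents
\begin{equation*}
\frac{1}{q}=\frac{\theta}{q_0},\qquad \frac{1}{r}=\frac{\theta}{q_0}+\frac{1-\theta}{2},
\end{equation*}
this gives $\|K_{\pm}(t)P_N f\|_{L^q_t L^r_x} \lec N^{\theta(d/2-(d+1)/q_0)}\|f\|_{L^2_x}$. Substituting $q_0=\theta q$ and $\theta = 1-2(1/r-1/q)$ simplifies the exponent on the right to $d\theta/2-(d+1)/q = d(1/2-1/r)-1/q$, matching \eqref{qr} exactly.

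Finally I would verify that the map $(q_0,\theta)\mapsto(q,r)$ sweeps out the region stated in the proposition. The constraint $\theta\le 1$ is equivalent to $r\le q$; the constraint $q_0 > 2d/(d-1)$ is equivalent to the strict upper bound $1/q < (d-1)(1/2-1/r)$; and the remaining bounds $(d-1)(1/2-1/r)/2 \le 1/q$ and $r\le 2(d+1)/(d-1)$ delineate the portion of the Strichartz quadrilateral traced out as $q_0$ and $\theta$ vary over their admissible ranges, with the diagonal corner $q=r=2(d+1)/(d-1)$ attained as a direct instance of Proposition \ref{rStrkg1}.

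The main obstacle is bookkeeping rather than a deep estimate: one must apply Riesz-Thorin inside the closed subspace of radial $L^2_x$ functions, verifying compatibility with the complex interpolation functor, and one must argue the closed lower-boundary case $1/q=(d-1)(1/2-1/r)/2$ either by a limiting argument in the open interpolation parameter or by appealing to the diagonal endpoint of Proposition \ref{rStrkg1} together with a Sobolev-type inequality on a single frequency shell.
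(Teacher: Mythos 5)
Your argument is correct and is essentially the paper's own proof: the paper also obtains \eqref{qr} by taking the diagonal case $q=r$ from Proposition \ref{rStrkg1} (i.e.\ \eqref{kgradhi}) and interpolating with the trivial $L^{\I}_t L^2_x$ energy bound, which is exactly your scheme spelled out in detail. (Your worry about the boundary $1/q=(1/2)(d-1)(1/2-1/r)$ is unnecessary, since the only strict constraint produced by the interpolation is the upper one $1/q<(d-1)(1/2-1/r)$.)
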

\begin{proof}
When $q=r$, \eqref{qr} follows from \eqref{kgradhi}. 
Interpolating $L^q_{t,x}$ with $L^{\I}_t L^2_x$, we obtain \eqref{qr}. 
\end{proof}
\begin{prop} \label{rStr}
(i) Let $d \ge 3, (q,r)$ satisfy \eqref{wradadm} in Proposition \ref{rStrw} and $s=d(1/2-1/r)-1/q$. 
If $p > q$, then it holds that 
\EQQS{
 \|u\|_{L^q_t \dot{W}^{-s,r}_x(\R^{1+d})} \lec \|u\|_{V^p_{W_{\pm c}}}. 
}
(ii) Let $(q,r)$ satisfy the condition in Proposition \ref{rStrkg2}. 
If $p > q$ and $s_1 = d(1/2-1/r)-1/q$, then it holds that 
\EQS{
 \| P_{\ge 1} u\|_{L^q_t \dot{W}^{-s_1,r}_x(\R^{1+d})} \lec \|u\|_{V^p_{K_{\pm}}}.          \label{radhi}
}
(iii) If $p > q$ and $s_2 = d/2-(d+2)/q$, then it holds that 
\EQS{
 \| P_{<1} u\|_{L^q_t \dot{W}^{-s_2,q}_x(\R^{1+d})} \lec \|u\|_{V^p_{K_{\pm}}}.              \label{radlo}
}
\end{prop}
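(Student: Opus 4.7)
The plan is to adapt the three-step derivation of Proposition~\ref{Str} to the radial setting, replacing the non-radial Strichartz estimates with their radial counterparts from Propositions~\ref{rStrw}, \ref{rStrkg1}, and \ref{rStrkg2}. First I would perform a Littlewood--Paley decomposition $u = \sum_N P_N u$ and use the characterization $\|f\|_{\dot{W}^{-s,r}_x} \sim \|(\sum_N N^{-2s}\,|P_N f|^2)^{1/2}\|_{L^r_x}$ together with Minkowski's inequality (legal since $q,r\ge 2$) to reduce the claim to controlling a weighted $\ell^2$-sum of frequency-localised bounds $\bigl(\sum_N N^{-2s}\|P_N u\|_{L^q_t L^r_x}^2\bigr)^{1/2}$.

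For (i), I would then apply Proposition~\ref{rStrw}, which gives $\|W_{\pm c}(t)P_N f\|_{L^q_t L^r_x} \lec N^{s}\|f\|_{L^2_x}$ with $s=d(1/2-1/r)-1/q$ for radial $f$, and upgrade it via the transference principle of Proposition~\ref{mlinear} to $\|P_N u\|_{L^q_t L^r_x} \lec N^{s}\|P_N u\|_{U^q_{W_{\pm c}}}$. The weights then cancel, leaving $\bigl(\sum_N \|P_N u\|_{U^q_{W_{\pm c}}}^2\bigr)^{1/2}$, which the embedding $V^p_{W_{\pm c}} \hookrightarrow U^q_{W_{\pm c}}$ from Proposition~\ref{embedding}(iii), together with the square-function control of Littlewood--Paley pieces in the $V^p$ calculus, dominates by $\|u\|_{V^p_{W_{\pm c}}}$. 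Parts (ii) and (iii) proceed identically: for (ii) one replaces Proposition~\ref{rStrw} by Proposition~\ref{rStrkg2} and inserts the frequency cut-off $P_{\ge 1}$; for (iii) one uses the low-frequency estimate of Proposition~\ref{rStrkg1} in the range $N<1$, where the gain $N^{d/2-(d+2)/q}=N^{s_2}$ takes the place of $N^{s}$ and the equality $r=q$ makes the Littlewood--Paley step particularly clean.

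The main obstacle, shared with Proposition~\ref{Str}, is the passage from the frequency-by-frequency $U^q$ bound to a single $V^p$ bound on $u$: this requires the uniform boundedness of the projectors $P_N$ on $V^p_{K_\pm}$ and $V^p_{W_{\pm c}}$ and a square-function-type estimate, both standard in the Hadac--Herr--Koch calculus but requiring care since the strict inequality $1/q < (d-1)(1/2-1/r)$ in Proposition~\ref{rStrw} (and analogously in Proposition~\ref{rStrkg1}) leaves only a thin $\varepsilon$-window in which the embedding $V^p \hookrightarrow U^q$ can be applied.
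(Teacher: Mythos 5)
Your skeleton --- radial Strichartz for the propagator, transference via Proposition \ref{mlinear}, then the embedding of Proposition \ref{embedding}(iii) --- is the same as the paper's, but the place where you perform the Littlewood--Paley summation creates a genuine gap. After transferring the frequency-localized bound to $\|P_N u\|_{L^q_tL^r_x}\lec N^{s}\|P_N u\|_{U^q_{W_{\pm c}}}$, your argument needs $\bigl(\sum_N\|P_N u\|_{U^q_{W_{\pm c}}}^2\bigr)^{1/2}\lec\|u\|_{V^p_{W_{\pm c}}}$, which you attribute to a ``square-function control of Littlewood--Paley pieces in the $V^p$ calculus.'' No such estimate is available: already for $p=2$ the inequality $\sum_N\|P_N u\|_{V^2}^2\lec\|u\|_{V^2}^2$ is false, because each summand takes its own supremum over partitions and these suprema need not be attained by a common partition; a function whose frequency-$N_j$ components oscillate on widely separated time scales, each normalized to have unit $V^2$ norm, has $\sum_j\|P_{N_j}u\|_{V^2}^2\sim J$ while $\|u\|_{V^2}^2\sim 1$. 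Only the reverse direction holds (that is Lemma \ref{estX}), and this failure is precisely why the paper, following Hadac--Herr--Koch, works with the $\ell^2$-summed spaces $Y^s,\dot Y^s,Z^s,\dot Z^s$ instead of $V^2$ of a Sobolev-valued function. So the final step of your reduction does not close.

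The repair --- and what the paper's one-line proof is implicitly doing --- is to sum over frequencies \emph{before} transferring, at the level of the free evolution, where $\ell^2$ orthogonality is available on the datum: by the Littlewood--Paley characterization of $\dot{W}^{-s,r}_x$ ($2\le r<\infty$), Minkowski's inequality in $L^q_t$ ($q\ge2$) and \eqref{wrad}, one obtains $\| |\nabla_x|^{-s}W_{\pm c}(t)f\|_{L^q_tL^r_x}\lec\bigl(\sum_N\|P_N f\|_{L^2_x}^2\bigr)^{1/2}\sim\|f\|_{L^2_x}$ for radial $f$, and likewise for \eqref{qr} with the cutoff $P_{\ge1}$ and for \eqref{kgradlo} with $P_{<1}$. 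Only then does one apply Proposition \ref{mlinear} to the map $\phi\mapsto|\nabla_x|^{-s}\phi$ (restricted to radial data; a $U^q$ atomic decomposition of a radial $u$ may be taken with radial steps, since the projection onto radial functions is an $L^2_x$ contraction commuting with the propagator), followed by the embedding $V^p_{-,rc}\subset U^q$, which requires $q>p$ (this is how the proposition is used, with $p=2<q$; the hypothesis ``$p>q$'' in the statement should be read in that sense). With this ordering no square-function estimate in $V^p$ is needed, and your treatment of the three cases (i)--(iii), including the clean case $r=q$ in (iii), then goes through; your closing worry about the strict admissibility inequality is immaterial, since it concerns the range of $(q,r)$ in the radial Strichartz estimate and not the $V^p\subset U^q$ embedding.
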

Combining Proposition \ref{embedding}, Proposition \ref{rStrw} and Proposition \ref{mlinear}, 
we have Proposition \ref{rStr} $(i)$. 
Combining Proposition \ref{embedding}, Proposition \ref{rStrkg2} and Proposition \ref{mlinear}, 
we have Proposition \ref{rStr} $(ii)$.
Combining Proposition \ref{embedding}, \eqref{kgradlo} and Proposition \ref{mlinear}, 
we have Proposition \ref{rStr} $(iii)$.

\begin{prop} \label{unique}  
(i) Let $T>0$ and $u \in Y^s_{K_{\pm}}([0,T]), u(0)=0$. Then, there exists $0 \le T' \le T$ such that 
$\|u\|_{Y^s_{K_{\pm}}([0,T'])} < \e $.   \\
(ii) Let $T>0$ and $n \in \dot{Y}^s_{W_{\pm c}}([0,T]), n(0)=0$. 
Then, there exists $0 \le T' \le T$ such that $\|n\|_{\dot{Y}^s_{W_{\pm c}}([0,T'])} < \e$. 
\end{prop}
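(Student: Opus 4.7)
I treat (i); part (ii) follows by the same argument with $K_\pm$ replaced by $W_{\pm c}$ and the inhomogeneous Littlewood--Paley sum replaced by its homogeneous counterpart. Fix any extension $\bar u\in Y^s_{K_{\pm}}$ of $u$; by continuity $\bar u(0)=u(0)=0$. To produce a good witness for the infimum defining the restricted norm, I truncate in time: set $\tilde u_{T'}(t):=\1_{[0,T')}(t)\bar u(t)$ (smoothed slightly if necessary so as to lie in $Y^s_{K_{\pm}}$). Since $\tilde u_{T'}$ agrees with $u$ on $[0,T')$,
$$\|u\|_{Y^s_{K_{\pm}}([0,T'])}\le \|\tilde u_{T'}\|_{Y^s_{K_{\pm}}},$$
so it suffices to show $\|\tilde u_{T'}\|_{Y^s_{K_{\pm}}}\to 0$ as $T'\to 0+$.

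For each dyadic $N$, set $g_N:=K_{\pm}(-\cdot)P_N\bar u\in V^2$. A direct partition analysis: any partition of $\R$ produces nonzero differences for $\1_{[0,T')}g_N$ only at points lying inside $[0,T')$ and at the two transitions across its endpoints (each of size at most $\sup_{t\in[0,T']}\|g_N(t)\|_{L^2_x}$). This yields
$$\|P_N\tilde u_{T'}\|_{V^2_{K_{\pm}}}^2 \;\lec\; \|P_N\bar u\|_{V^2_{K_{\pm}}([0,T'])}^2 + \sup_{t\in[0,T']}\|P_N\bar u(t)\|_{L^2_x}^2.$$
Both terms are dominated by $\|P_N\bar u\|_{V^2_{K_{\pm}}}^2$ (using the embedding $V^2_{K_{\pm}}\hookrightarrow L^\infty_t L^2_x$ from Remark \ref{embed}), and after multiplying by $N^{2s}$ the majorant sums to the finite $\|\bar u\|_{Y^s_{K_{\pm}}}^2$. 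Dominated convergence therefore reduces matters to pointwise (in $N$) convergence. The sup-term tends to $0$ as $T'\to 0+$ by the continuity of $P_N\bar u$ at $0$ with $P_N\bar u(0)=0$; the $V^2$-restriction term is controlled by the following key claim.

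\textbf{Key claim.} For any $g\in V^2$ right-continuous at $0$ with $g(0)=0$, $\|g\|_{V^2([0,T'])}\to 0$ as $T'\to 0+$. This is the main analytical obstacle: a naive $\varepsilon$--$\delta$ argument fails because a partition of $[0,T']$ may contain arbitrarily many points, each contributing a term of size comparable to $\sup\|g\|_{L^2_x}^2$. I would combine two ingredients. First, \emph{super-additivity},
$$\|g\|_{V^2([0,T'])}^2 \;\ge\; \|g\|_{V^2([0,t_1])}^2 + \|g\|_{V^2([t_1,T'])}^2 \qquad (0<t_1<T'),$$
obtained by concatenating near-optimal partitions of the two subintervals. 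Second, the \emph{first-interval identity},
$$\|g\|_{V^2([0,T'])}^2 \;=\; \sup_{t_1\in(0,T']}\bigl(\|g(t_1)\|_{L^2_x}^2 + \|g\|_{V^2([t_1,T'])}^2\bigr),$$
obtained by splitting each partition of $[0,T']$ at its first interior point. Setting $\alpha_0:=\lim_{T'\to 0+}\|g\|_{V^2([0,T'])}^2$, super-additivity with $\|g\|_{V^2([0,t_1])}^2\ge\alpha_0$ yields $\|g\|_{V^2([t_1,T'])}^2\le \|g\|_{V^2([0,T'])}^2-\alpha_0$; substituting into the identity gives $\alpha_0\le \sup_{t_1\in(0,T']}\|g(t_1)\|_{L^2_x}^2$, which vanishes as $T'\to 0+$ by right-continuity of $g$ at $0$. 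Hence $\alpha_0=0$, completing the plan.
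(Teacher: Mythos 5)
The paper never proves Proposition \ref{unique} itself --- it only cites Proposition 2.24 of \cite{HHK} --- so your self-contained argument is necessarily a different route, and its mathematical core is sound. The reduction (truncate an extension $\bar u$, bound each dyadic block of the truncation by the squared $2$-variation of $g_N=K_{\pm}(-\cdot)P_N\bar u$ over $[0,T']$ plus $\sup_{t\in[0,T']}\|g_N(t)\|_{L^2_x}^2$, then sum using a $T'$-independent summable majorant and pass to the limit blockwise) is correct, and your key claim is proved correctly: splitting an interval gives superadditivity of the squared $2$-variation because concatenating partitions only creates an extra nonnegative bridge term, inserting the point $0$ into a partition only adds the first increment $\|g(t_1)\|^2$, and combining the two yields $\alpha_0\le\sup_{t_1\in(0,T']}\|g(t_1)\|^2\to0$ by right-continuity and $g(0)=0$. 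Two technical points do need the repairs you only gesture at. First, the sharp cutoff $\1_{[0,T')}\bar u$ is discontinuous at $T'$, whereas $Y^s_{K_{\pm}}$ is defined in this paper as the closure of continuous functions under a norm dominating $L^\infty_t H^s_x$, hence consists of continuous functions; so the smoothing you mention parenthetically is genuinely required. A cleaner witness is the free-evolution continuation $\tilde u_{T'}(t):=0$ for $t\le0$, $\tilde u_{T'}(t):=\bar u(t)$ for $0\le t\le T'$, $\tilde u_{T'}(t):=K_{\pm}(t-T')\bar u(T')$ for $t\ge T'$: then $K_{\pm}(-t)P_N\tilde u_{T'}$ vanishes for $t\le0$ and is constant for $t\ge T'$, so its $V^2$ norm is controlled exactly by the two quantities you estimate (the convention $v(\infty)=0$ contributes only $\|g_N(T')\|\le\sup_{[0,T']}\|g_N\|$), and the extension agrees with $u$ on all of $[0,T']$. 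Second, the low-frequency piece $P_{<1}$ must be carried along in the dominated-convergence step, which is immediate. With these routine fixes your proof is complete; it is in the same spirit as the argument behind the cited result in \cite{HHK} (right-continuity of $V^2$ functions plus partition bookkeeping), but you supply the details where the paper gives only a reference, which is a worthwhile addition.
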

For the proofs of $(i)$ and $(ii)$, see Proposition 2.24 in \cite{HHK}.

\begin{lem} \label{estX}
Let $a \ge 0$. Then for $A=K_{\pm}$ or $W_{\pm c}$, it holds that 
\EQQS{
 \|\LR{\na _x}^a f\|_{V^2_A} \lec \|f\|_{Y^a_A}. 
}
\end{lem}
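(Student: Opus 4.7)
The plan is to reduce the problem from $V^2_A$ to the plain $V^2$ space using unitarity of $A$, then apply Littlewood--Paley orthogonality to each increment in the definition of the $V^2$ norm. Since $\LR{\na_x}^a$ and $A$ are both Fourier multipliers they commute, so $\|\LR{\na_x}^a f\|_{V^2_A}=\|A(-\cdot)\LR{\na_x}^a f\|_{V^2}=\|\LR{\na_x}^a g\|_{V^2}$ with $g:=A(-\cdot)f$. Likewise, $P_N$ and $P_{<1}$ commute with $A$, so $\|P_N g\|_{V^2}=\|P_N f\|_{V^2_A}$ and $\|P_{<1} g\|_{V^2}=\|P_{<1}f\|_{V^2_A}$. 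Therefore it suffices to prove
\EQQS{
\|\LR{\na_x}^a g\|_{V^2}^2\lec \|P_{<1}g\|_{V^2}^2+\sum_{N\ge 1}N^{2a}\|P_N g\|_{V^2}^2,
}
whose right-hand side is $\sim\|f\|_{Y^a_A}^2$.

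The two fixed-time ingredients I would use are, first, the Plancherel-type identity $\|h\|_{L^2_x}^2\sim \|P_{<1}h\|_{L^2_x}^2+\sum_{N\ge 1}\|P_N h\|_{L^2_x}^2$ (which holds because each frequency $\xi$ lies in the support of only finitely many Littlewood--Paley cutoffs $\psi,\varphi_n$), and, second, the pointwise multiplier estimates $\|P_{<1}\LR{\na_x}^a w\|_{L^2_x}\lec \|P_{<1}w\|_{L^2_x}$ and $\|P_N\LR{\na_x}^a w\|_{L^2_x}\lec N^a \|P_N w\|_{L^2_x}$ for $N\ge 1$. Both multiplier bounds follow from $\LR{\xi}^a\lec 1$ on $\{|\xi|\le 2\}$ and $\LR{\xi}^a\sim N^a$ on the support of $\varphi_n$ when $N=2^n\ge 1$, after inserting a slightly fattened Littlewood--Paley cutoff.

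With these in hand, for any partition $\{t_k\}_{k=0}^K\in\mathcal{Z}$ I would apply the orthogonality to each increment $\LR{\na_x}^a g(t_k)-\LR{\na_x}^a g(t_{k-1})$, use the multiplier bounds to replace $\LR{\na_x}^a$ by a factor of $N^a$ on each dyadic block, and then interchange the (finite) $k$-sum with the dyadic $N$-sum. The latter is a non-negative Tonelli swap, after which the inner sum $\sum_k\|P_N(g(t_k)-g(t_{k-1}))\|_{L^2_x}^2\le \|P_N g\|_{V^2}^2$ (and similarly for $P_{<1}$) is immediate from the definition of $V^2$. Taking the supremum over partitions and undoing the unitary conjugation produces the claimed estimate. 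No step poses a real obstacle; the only moment that needs a second glance is the Tonelli interchange of partition sum with dyadic sum, which is justified because every summand is non-negative.
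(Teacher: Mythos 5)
Your proof is correct and follows essentially the same route as the paper: fixed-time Littlewood--Paley orthogonality in $L^2_x$, the multiplier bounds $\LR{\xi}^a\lec 1$ on low frequencies and $\LR{\xi}^a\sim N^a$ on the block $N\ge 1$, interchange of the partition sum with the dyadic sum, and block-wise control by the $V^2$ norms defining $Y^a_A$. Conjugating by $A(-\cdot)$ first rather than carrying the factors $K_{\pm}(-t_i)$ through, as the paper does, is only a cosmetic difference since the multipliers commute with the unitary group.
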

\begin{proof}
We only prove for $A=K_{\pm}$ since we can prove similarly for $A=W_{\pm c}$. 
By $L^2_x$ orthogonality, we have  
\EQQS{ 
  \|\LR{\na _x}^a f\|_{V^2_{K_{\pm}}}^2 
    &\lec \sup_{\{t_i\}_{i=0}^I \in \mathcal{Z}} 
              \sum_{i = 1}^I (\|P_{<1}(K_{\pm}(-t_i)f(t_i)-K_{\pm}(-t_{i-1})f(t_{i-1}))\|_{L^2_x}^2 \notag \\
    &\quad               
              + \sum_{N \ge 1} N^{2a} \|P_N(K_{\pm}(-t_i)f(t_i)-K_{\pm}(-t_{i-1})f(t_{i-1}))\|_{L^2_x}^2) \notag \\
    &\lec \sup_{\{t_i\}_{i=0}^I \in \mathcal{Z}} 
              \sum_{i=1}^I \|K_{\pm}(-t_i)P_{<1}f(t_i)-K_{\pm}(-t_{i-1})P_{<1}f(t_{i-1})\|_{L^2_x}^2 \notag \\
    &\quad    
              + \sum_{N \ge 1} N^{2a} \sup_{\{t_i\}_{i=0}^I \in \mathcal{Z}} 
                   \sum_{i=1}^I \|K_{\pm}(-t_i)P_N f(t_i)-K_{\pm}(-t_{i-1})P_N f(t_{i-1})\|_{L^2_x}^2 \notag \\
    &\lec \|f\|_{Y^a_{K_{\pm}}}^2.
}
\end{proof}
\begin{rem} \label{estx}
Similarly, we see 
\EQQS{
 \| |\na _x|^a f\|_{V^2_A} \lec \|f\|_{\dot{Y}^a_A}.  
}
\end{rem}

\begin{prop} \label{modulation}
It holds that 
\EQS{
 &\|Q_M u\|_{L^2_{t,x}(\R^{1+d})} \lec M^{-1/2}\|u\|_{V^2_{K_{\pm}}},\ \ \ \ \ 
      \|Q_{\ge M} u\|_{L^2_{t,x}(\R^{1+d})} \lec M^{-1/2}\|u\|_{V^2_{K_{\pm}}}, \label{mod} \\
  &\|Q_{<M} u\|_{V^2_{K_{\pm}}} \lec \|u\|_{V^2_{K_{\pm}}},\ \ \ \ \ 
      \|Q_{\ge M} u\|_{V^2_{K_{\pm}}} \lec \|u\|_{V^2_{K_{\pm}}}, \notag \\
  &\|Q_{<M} u\|_{U^2_{K_{\pm}}} \lec \|u\|_{U^2_{K_{\pm}}},\ \ \ \ \ 
      \|Q_{\ge M} u\|_{U^2_{K_{\pm}}} \lec \|u\|_{U^2_{K_{\pm}}}. \notag 
}
The same estimates hold by replacing the Klein-Gordon operator $K_{\pm}$ by the wave operator $W_{\pm c}.$  
\end{prop}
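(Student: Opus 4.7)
The plan is to eliminate the Klein--Gordon flow from all four estimates by unitary conjugation, reducing the Proposition to standard bounds on the untwisted time--frequency projections $\tilde Q_M$, $\tilde Q_{<M}$, $\tilde Q_{\ge M}$ acting on $V^2$ and $U^2$. These then follow from Plancherel in time combined with the atomic/variational structure of these spaces.

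For $u\in V^2_{K_{\pm}}$ I set $g:=K_{\pm}(-\cdot)u$, so that $\|g\|_{V^2}=\|u\|_{V^2_{K_{\pm}}}$ and similarly for $U^2$. Since $K_{\pm}$ is a pointwise $L^2_x$-isometry and $Q_M=K_{\pm}(\cdot)\til Q_M K_{\pm}(-\cdot)$, one gets $\|Q_M u\|_{L^2_{t,x}}=\|\til Q_M g\|_{L^2_{t,x}}$, $\|Q_{<M}u\|_{V^2_{K_{\pm}}}=\|\til Q_{<M}g\|_{V^2}$, and the analogous identities for $Q_{\ge M}$ and for the $U^2$-based norms. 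Thus the Proposition reduces to (a) $\|\til Q_M g\|_{L^2_{t,x}}+\|\til Q_{\ge M}g\|_{L^2_{t,x}}\lec M^{-1/2}\|g\|_{V^2}$, and (b) boundedness of $\til Q_{<M}$ and $\til Q_{\ge M}$ on $V^2$ and on $U^2$ uniformly in $M$; the wave case is identical after replacing $K_{\pm}$ by $W_{\pm c}$.

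For (a), since $\til Q_M=\til Q_{\ge M}-\til Q_{\ge 2M}$ it suffices to treat $\til Q_{\ge M}$. By Plancherel in $t$, the left-hand side is comparable to $\int_{|\ta|\gec M}\|\ha g(\ta,\cdot)\|_{L^2_x}^2\,d\ta$. By density of $V^2_{-,\, rc}$ step functions I may assume $g=\sum_{k=1}^K \1_{[t_{k-1},t_k)}\phi_{k-1}$; Abel summation then gives $i\ta\,\ha g(\ta)=\sum_k e^{-i\ta t_k}a_k$ (modulo boundary terms) with $a_k:=\phi_k-\phi_{k-1}$ and $\sum_k\|a_k\|_{L^2_x}^2\lec\|g\|_{V^2}^2$. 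Expanding the $L^2_x$ norm of the sum, integrating in $\ta$ over $|\ta|\gec M$, and using the oscillatory bound $|\int_{|\ta|\sim N}e^{i\ta s}d\ta|\lec \min(N,|s|^{-1})$ on the off-diagonal terms, a Schur-type argument in $(j,k)$ reduces the whole expression to the diagonal contribution $\lec M\sum_k\|a_k\|_{L^2_x}^2\lec M\|g\|_{V^2}^2$; division by $|\ta|^2\sim M^2$ yields the claimed $M^{-1}\|g\|_{V^2}^2$. A density argument in $V^2_{-,\, rc}$ concludes.

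For (b), $\til Q_{<M}$ is time convolution with a smooth Littlewood--Paley kernel whose $L^1_t$-norm is uniform in $M$, and $\til Q_{\ge M}=\mathrm{Id}-\til Q_{<M}$. A direct Minkowski estimate on the partition supremum defining $V^2$ gives $\|\phi\ast v\|_{V^2}\le\|\phi\|_{L^1_t}\|v\|_{V^2}$, and an atom-by-atom computation yields the analogous $U^2$ bound; together with the trivial bound for $\mathrm{Id}$ this proves (b). The main obstacle is (a): a naive integration by parts produces only the $\ell^1$ bounded-variation quantity $\sum\|a_k\|_{L^2_x}$, which is not controlled by the $V^2$ norm. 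Squaring first and then integrating in $\ta$ over the annulus is what converts the off-diagonal oscillation into a Schur test and upgrades the bound to the sharp $\ell^2$ form carried by $\|g\|_{V^2}$.
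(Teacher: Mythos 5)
Your reduction by unitary conjugation and your part (b) are fine (the convolution bound on $V^2$ via Minkowski, the atomwise bound on $U^2$, and $\til{Q}_{\ge M}=\mathrm{Id}-\til{Q}_{<M}$, $\til Q_M=\til Q_{\ge M}-\til Q_{\ge 2M}$ are all standard and correct), but the core of part (a) has two genuine gaps. First, step functions are \emph{not} dense in $V^2_{-,\,rc}$ with respect to the $V^2$ norm, so the opening reduction "I may assume $g$ is a step function" is not available. A concrete obstruction: take $g=\sum_{n\ge 1}2^{-n/2}g_n\,\phi_n$ on a bounded time interval, where $\{\phi_n\}$ is orthonormal in $L^2_x$ and $g_n$ is a unit-amplitude triangle wave of period $2^{-n}$. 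For any partition one checks $\sum_j\|g(t_j)-g(t_{j-1})\|_{L^2_x}^2\lec\sum_j\sum_n 2^{-n}\min(1,2^{2n}|t_j-t_{j-1}|^2)\lec\sum_j|t_j-t_{j-1}|$, so $g\in V^2$; yet on every subinterval $I$ the projection onto the $\phi_n$ with $2^{-n}\sim|I|$ gives a local $2$-variation $\gec|I|$, whence for every step function $s$ (summing the local variations over its constancy intervals) $\|g-s\|_{V^2}^2\gec 1$. This "diffuse $2$-variation" phenomenon is exactly what separates $V^2$ from the atomic space $U^2$, where your argument would be legitimate. Second, even for step functions the claimed Schur-type reduction to the diagonal is false: after expanding, your argument asserts in effect $\int_{|\ta|\ge M}\|\ha g(\ta)\|_{L^2_x}^2d\ta\lec M^{-1}\sum_k\|a_k\|_{L^2_x}^2$, and this inequality fails. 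Take two clusters of $K/2$ parallel jumps of size $2\|b\|/K$ each (rising to $b$, then back to $0$), the clusters of width $\ll M^{-1}$ and separated by $\gg M^{-1}$: then the left-hand side is $\sim M^{-1}\|b\|^2$ while $M^{-1}\sum_k\|a_k\|^2\sim M^{-1}\|b\|^2/K$. Correspondingly, the Schur row sums $\sup_j\sum_k\min(1,(M|t_j-t_k|)^{-1})$ are of size $K$, not $O(1)$, when the jump times cluster; the off-diagonal part is genuinely dominant there, and the true estimate only holds with the full $V^2$ norm (which, unlike $(\sum_k\|a_k\|^2)^{1/2}$, also sees coarse cumulative increments such as $\|b\|$).

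For comparison: the paper does not prove Proposition \ref{modulation} at all; it is quoted as a standard fact of the $U^2,V^2$ theory (it is Corollary 2.18 of \cite{HHK}, stated there for the Schr\"odinger group, and the proof is insensitive to which unitary group one conjugates by). A correct direct proof along your conjugated setup avoids both Fourier expansion and density: write $\til Q_{<M}g=\phi_M\ast_t g$ with $\int\phi_M\,dt=1$, so $\til Q_{\ge M}g(t)=\int\phi_M(s)\bigl(g(t)-g(t-s)\bigr)ds$; the elementary translation bound $\|g-g(\cdot-s)\|_{L^2_{t,x}}\le|s|^{1/2}\|g\|_{V^2}$ (sum the two-point partitions $\{\theta+ks\}_{k}$ and integrate over the offset $\theta\in[0,|s|)$) together with Minkowski's integral inequality gives $\|\til Q_{\ge M}g\|_{L^2_{t,x}}\lec\int|\phi_M(s)||s|^{1/2}ds\,\|g\|_{V^2}\lec M^{-1/2}\|g\|_{V^2}$, which yields \eqref{mod}; your part (b) then completes the proposition. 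Alternatively, your atom-based computation does prove the $U^2$ analogue $\|Q_{\ge M}u\|_{L^2_{t,x}}\lec M^{-1/2}\|u\|_{U^2_{K_\pm}}$, but upgrading $U^2$ to $V^2$ requires an argument (interpolation-type or the one above), not density.
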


\begin{lem} \label{Mhigh}
If $f, g$ are measurable functions, then 
\EQQS{
 \int_{\R^{1+d}} f(t,x) \ol{Q_{\ge M} g(t,x)} dxdt = \int_{\R^{1+d}} (Q_{\ge M} f(t,x))\ol{g(t,x)}dxdt. 
}
\end{lem}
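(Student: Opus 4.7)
The plan is to recognize the identity as the statement that $Q_{\ge M}$ is self-adjoint as an operator on $L^2_{t,x}(\R^{1+d})$; in view of the dyadic decomposition $Q_{\ge M} = \sum_{N \ge M} Q_N$, it suffices to establish the self-adjointness of each $Q_N$ and then sum.

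To verify that $Q_N^{*} = Q_N$, I would exploit the factorization $Q_N = K_{\pm}(\cdot)\til{Q}_N K_{\pm}(-\cdot)$. Viewed as an operator on $L^2_{t,x}$ by $(K_{\pm}(\cdot)u)(t,x) := (K_{\pm}(t)u(t,\cdot))(x)$, the map $K_{\pm}(\cdot)$ is unitary because $K_{\pm}(t): L^2_x \to L^2_x$ is unitary for each $t$ with inverse $K_{\pm}(-t)$; Fubini in $(t,x)$ together with pointwise unitarity in $x$ then yield $(K_{\pm}(\cdot))^{*} = K_{\pm}(-\cdot)$ on $L^2_{t,x}$. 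The operator $\til{Q}_N$ acts as a Fourier multiplier in the $t$ variable only, with symbol $\phi(\ta/N)$, which is real-valued, so Plancherel in time combined with Fubini in $x$ shows that $\til{Q}_N$ is self-adjoint on $L^2_{t,x}$. Taking the adjoint of the composition yields $Q_N^{*} = K_{\pm}(\cdot)\til{Q}_N K_{\pm}(-\cdot) = Q_N$, which is exactly the desired identity at the single scale $N$.

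The remaining issue is to pass from the $L^2_{t,x}$ self-adjointness to the stated formulation for merely measurable $f,g$. I would first verify the identity on Schwartz (or at least $L^2_{t,x}$) inputs, where both integrals are absolutely convergent and Plancherel applies literally, and then extend to general measurable $f,g$ for which both sides make sense by a routine density/truncation argument, invoking Fubini as needed. I do not anticipate a serious obstacle here: the crucial point is simply that the multiplier symbol $\phi(\ta/N)$ of $\til{Q}_N$ is real, which is precisely what lets the operator be moved from $g$ to $f$ under the $L^2_{t,x}$ pairing; the rest is bookkeeping and the summation over $N \ge M$, the latter justified because $Q_{\ge M}$ is well-defined on the relevant class of inputs.
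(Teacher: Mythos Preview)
Your argument is correct. The paper itself does not supply a proof of this lemma but simply refers to \cite{KaT}; your route via the factorization $Q_N = K_{\pm}(\cdot)\til{Q}_N K_{\pm}(-\cdot)$, the unitarity of $K_{\pm}(\cdot)$ on $L^2_{t,x}$, and the self-adjointness of the real time-multiplier $\til{Q}_N$ is exactly the standard one. A marginally shorter variant is to observe that on the space--time Fourier side $Q_N$ is multiplication by the real-valued symbol $\phi\bigl((\ta \pm \LR{\xi})/N\bigr)$, whence self-adjointness follows directly from Plancherel; but this is the same computation in different packaging.
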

For the proof of Lemma \ref{Mhigh}, see ~\cite{KaT}.

\begin{lem}  \label{tri}
Let $\til{u}_{N_1}:=\1_{[0,T)}P_{N_1} u, \til{v}_{N_2}:=\1_{[0,T)}P_{N_2} v, \til{n}_{N_3}:=\1_{[0,T)}P_{N_3} n, 
Q_1, Q_2 \in \{ Q_{<M},Q_{\ge M}\}, Q_3 \in \{ Q_{<M},Q_{\ge M}\}, s':=(d^2-3d-2)/2(d+1), s_c:=d/2-2$.  
Then the following estimates hold for sufficiently small $T>0$ if $\th > 0$, and hold for all $0<T<\I$ if $\th = 0$ or spherically symmetric $(u,v,n)$: \\ 
 (i) If $N_3 \lec N_2 \sim N_1$, then  
\EQQS{
 |I_1| &:= \Bigl|\int_{\R^{1+d}}(\om_1^{-1}\til{u}_{N_1})(\ol{\om_1^{-1}\til{v}_{N_2}})(\ol{\om \til{n}_{N_3}})dxdt\Bigr| \\
  &\lec T^{\th}N_3^{s}\|u_{N_1}\|_{V^2_{K_{\pm}}}\|v_{N_2}\|_{V^2_{K_{\pm}}} \|n_{N_3}\|_{V^2_{W_{\pm c}}}, 
}
when $(\th, s)=(1/4, 1/4)$ for $d=4$ and $(\th, s)=(0, s_c), (1/(d+1), s')$  for $d \ge 5$. 
Moreover, if $(u, v, n)$ are spherically symmetric, then for $d \ge 4$, 
\EQQS{
 &|I_1| \lec \LR{N_2}^{(d-8)/3}N_3^{(d+4)/6}\|u_{N_1}\|_{V^2_{K_{\pm}}}\|v_{N_2}\|_{V^2_{K_{\pm}}}\|n_{N_3}\|_{V^2_{W_{\pm c}}}. 
}
 (ii) $|I_2| := \bigl|\int_{\R^{1+d}} \til{n} (\om_1^{-1}\til{v})(\ol{P_{<1}\til{u}})dxdt\bigr|
                       \lec T^{\th} \|n\|_{\dot{Y}^s_{W_{\pm c}}}\|v\|_{Y^s_{K_{\pm}}}\|P_{<1}u\|_{V^2_{K_{\pm}}}$, \\
when $(\th, s) = (1, 1/4)$ for $d=4$ and $(\th, s) = (0, s_c), (1/(d+1), s')$ for $d \ge 5$. 
Moreover, if $(u,v,n)$ are spherically symmetric, then for $4 \le d <5$, 
\EQQS{
 &|I_2| \lec \|n\|_{\dot{Y}^{s_c}_{W_{\pm c}}}\|v\|_{Y^{s_c}_{K_{\pm}}}\|P_{<1}u\|_{V^2_{K_{\pm}}}. 
}
 (iii) If $N_3 \lec N_2 \sim N_1$, then  
\EQQS{
  |I_3| := \Bigl|\int_{\R^{1+d}}\Bigl(\sum_{N_3 \lec N_2}\til{n}_{N_3}\Bigr)(\om_1^{-1}\til{v}_{N_2})\ol{\til{u}_{N_1}}dxdt\Bigr|
                   \lec T^{\th}\|n\|_{\dot{Y}^s_{W_{\pm c}}}\|v_{N_2}\|_{V^2_{K_{\pm}}}\|u_{N_1}\|_{V^2_{K_{\pm}}}, 
}
when $(\th, s) = (1/4, 1/4)$ for $d=4$ and $(\th, s) = (0, s_c), (1/(d+1), s')$ for $d \ge 5$. 
Moreover, if $(u,v,n)$ are spherically symmetric, then for $4 \le d <5$, 
\EQQS{ 
   |I_3| \lec \|n\|_{\dot{Y}^{s_c}_{W_{\pm c}}}\|v_{N_2}\|_{V^2_{K_{\pm}}}\|P_{<1}u\|_{V^2_{K_{\pm}}}. 
}
 (iv) If $N_2 \ll N_1 \sim N_3, N_1 \ge 1, M=\e N_1$ and $\e > 0$ is sufficiently small, then   
\EQQS{
   |I_i| \lec T^{\th}\|n_{N_3}\|_{V^2_{W_{\pm c}}} \|v\|_{Y^s_{K_{\pm}}} \|u_{N_1}\|_{V^2_{K_{\pm}}}, 
} 
when $(\th, s) = (1/4, 1/4)$ for $d=4, i=4,5,6$ and $(\th, s) = (0, s_c)$ for $d \ge 4, i=4,6$ 
and $(\th, s) = (1/(d+1), s')$ for $d \ge 5, i=4,5,6$. 
Moreover, if $(u,v,n)$ are spherically symmetric, then for $d \ge 4$, it holds that 
\EQQS{ 
 |I_5| \lec \|n_{N_3}\|_{V^2_{W_{\pm c}}} \|v\|_{Y^{s_c}_{K_{\pm}}} \|u_{N_1}\|_{V^2_{K_{\pm}}},  
}
where 
\EQQS{ 
 &I_4 := \int_{\R^{1+d}}(Q_{\ge M}\til{n}_{N_3})\Bigl(\sum_{N_2 \ll N_1}Q_2 \om_1^{-1}\til{v}_{N_2}\Bigr)
                      (\ol{Q_1 \til{u}_{N_1}})dxdt, \\
 &I_5 := \int_{\R^{1+d}}(Q_3 \til{n}_{N_3})\Bigl(\sum_{N_2 \ll N_1}Q_{\ge M} \om_1^{-1}\til{v}_{N_2}\Bigr)
                      (\ol{Q_1 \til{u}_{N_1}})dxdt, \\
 &I_6 := \int_{\R^{1+d}}(Q_3 \til{n}_{N_3})\Bigl(\sum_{N_2 \ll N_1}Q_2 \om_1^{-1}\til{v}_{N_2}\Bigr)
                      (\ol{Q_{\ge M} \til{u}_{N_1}})dxdt.
}
\end{lem}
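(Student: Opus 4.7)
The plan is to reduce every piece of the lemma to an integral of three (or two) frequency-localized factors, apply H\"older in spacetime to distribute them into $L^{q_i}_t L^{r_i}_x$ norms, and then bound each factor by a Strichartz-type estimate controlled by the appropriate $V^2$ norm through Propositions \ref{Str}, \ref{rStr}, and \ref{mlinear}. The extra $T^{\th}$ factor will come from an additional H\"older in time against the cutoff $\1_{[0,T]}$ whenever I deliberately pick a Strichartz pair that is sub-critical with respect to scaling. In parts (i)--(iii) the frequency balance already produces smoothing from $\om_1^{-1}$ and $\om$, so the task reduces to matching the derivative losses of the Strichartz norms against the target regularity $s$. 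In part (iv), the key new ingredient is Lemma \ref{Recovery} combined with Proposition \ref{modulation}, which together recover the half-derivative loss that a pure Strichartz approach cannot reach.

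For (i) and (iii), where $N_3\lec N_2\sim N_1$, I would pull out the symbol factors $\om_1^{-1}\til u_{N_i}\sim \LR{N_i}^{-1}\til u_{N_i}$ and $\om \til n_{N_3}\sim N_3 \til n_{N_3}$ and then apply H\"older. For $d=4$ I would take a Strichartz pair near $(q,r)=(4,4)$ (whose derivative loss is $1/2$), which on $[0,T]$ produces a $T^{1/4}$ factor; for $d\ge 5$ at $(\th,s)=(0,s_c)$ I would use the critical admissible pair $2/q=d(1/2-1/r)$, and at $(\th,s)=(1/(d+1),s')$ I would slide $q$ slightly below critical, paying $T^{1/(d+1)}$. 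For part (ii), the same scheme applies with $P_{<1}u$ playing the role of the low-frequency factor and Lemma \ref{estX} providing the passage from $Y^s_{K_{\pm}}$ to the $V^2$-based norms used on the Strichartz side. In the radial case I would replace the generic Strichartz estimates with Proposition \ref{rStr}, whose strictly larger range of admissible $(q,r)$ provides the slack needed to reach the sharper bound $\LR{N_2}^{(d-8)/3}N_3^{(d+4)/6}$ in (i) and to reach the critical regularity $s_c$ in (ii) and (iii).

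Part (iv) is the most delicate. Here $N_2\ll N_1\sim N_3$, so Lemma \ref{Recovery} forces at least one of the three modulations to exceed $\gec N_1$ on the support of the Fourier transform of the product, and the three pieces $I_4$, $I_5$, $I_6$ are precisely these three sub-cases with $M=\e N_1$. In each $I_j$ I would estimate the high-modulation factor in $L^2_{t,x}$ via Proposition \ref{modulation}, gaining the crucial factor $M^{-1/2}\sim N_1^{-1/2}$, and then estimate the remaining two factors by Strichartz norms in triples such as $L^2_t L^{\I}_x\cdot L^{\I}_t L^2_x$ or similar, controlled by the $V^2$ norms. Lemma \ref{p} is used to bring the time-cutoff $\1_{[0,T]}$ through the modulation projection, and Lemma \ref{Mhigh} together with Corollary \ref{U2A} allows me to move $Q_{\ge M}$ onto a neighbouring factor when the natural dualization calls for it.

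The main obstacle is $I_5$, where the high modulation sits on the low-frequency factor $v_{N_2}$. The derivative gain $M^{-1/2}\sim N_1^{-1/2}$ is still there, but it is now attached to the $v$ factor rather than to $u$ or $n$; after spending it one must bound $n_{N_3}$ at the top frequency $N_3\sim N_1$ through a Strichartz norm whose derivative loss, in the non-radial case, exceeds the available smoothing by exactly $1/(d+1)$. Radially, Proposition \ref{rStr}(ii) enlarges the admissible region and shaves off precisely this loss, so the critical $s=s_c$ becomes reachable under radial symmetry. Non-radially, the cheapest repair is to inflate the regularity by $1/(d+1)$, which is exactly why the statement of (iv) only provides $s=s'$ for $I_5$ when $d\ge 5$, and why the radial version is stated separately with $s=s_c$.
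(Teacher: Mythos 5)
Your proposal follows essentially the same route as the paper's proof: H\"older plus (radial) Strichartz estimates transferred to $V^2$ via Propositions \ref{Str} and \ref{rStr} for parts (i)--(iii), with the $T^{\th}$ factor coming from sub-critical time exponents against $\1_{[0,T)}$, and for part (iv) the modulation decomposition at $M=\e N_1$ based on Lemma \ref{Recovery}, the $N_1^{-1/2}$ gain from Proposition \ref{modulation} on the high-modulation factor placed in $L^2_{t,x}$, Lemma \ref{p} for the time cutoff, and $I_5$ correctly singled out as the term that forces $s'$ in the non-radial case while radial Strichartz recovers $s_c$. This matches the paper's argument; only the concrete exponent bookkeeping (the paper's choices such as $L^{20/3}_tL^{5/2}_x$ and $L^{20/9}_tL^{5}_x$ for $d=4$, $L^{2(d+1)/(d-1)}_{t,x}$ and $L^{(d+1)/2}_{t,x}$ for $d\ge 5$, $L^3_{t,x}$ and $L^4_tL^{2d/(d-1)}_x$ radially) remains to be filled in.
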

\begin{proof}
We show $(i)$ first.  
For $f \in V^2_A,\ A \in \{ K_{\pm}, W_{\pm c} \}$, we see 
\EQS{  \label{c}
 \|\1_{[0,T)}f\|_{V^2_A} \lec \|f\|_{V^2_A}.
}
First, we show $d=4$. 
We apply the H\"{o}lder inequality, Proposition \ref{Str}, the Sobolev inequality, \eqref{c} and $N_3 \lec N_1 \sim N_2$, 
then we have  
\EQQS{
 |I_1|   
  &\lec \|\1_{[0,T)}\|_{L^4_t}\|\om_1^{-1}\til{u}_{N_1}\|_{L^{20/3}_t L^{5/2}_x} \|\om_1^{-1}\til{v}_{N_2}\|_{L^{20/3}_t L^{5/2}_x}
                    \|\om \til{n}_{N_3}\|_{L^{20/9}_t L^5_x}\\
  &\lec T^{1/4} \| \LR{\na_x}^{1/4}\om_1^{-1}\til{u}_{N_1}\|_{V^2_{K_{\pm}}} \| \LR{\na_x}^{1/4}\om_1^{-1}
                    \til{v}_{N_2}\|_{V^2_{K_{\pm}}} \| |\na_x|^{3/4}\om \til{n}_{N_3}\|_{V^2_{W_{\pm c}}} \\
  &\lec T^{1/4} \LR{N_1}^{1/4-1}\|u_{N_1}\|_{V^2_{K_{\pm}}} \LR{N_2}^{1/4-1}\|v_{N_2}\|_{V^2_{K_{\pm}}} N_3^{3/4+1}
                    \|n_{N_3}\|_{V^2_{W_{\pm c}}} \\
  &\lec T^{1/4} N_3^{1/4}\|u_{N_1}\|_{V^2_{K_{\pm}}}\|v_{N_2}\|_{V^2_{K_{\pm}}} \|n_{N_3}\|_{V^2_{W_{\pm c}}}.
}
For $d \ge 5$, we apply the H\"{o}lder inequality to have  
\EQS{
 |I_1| 
  \lec \|\om_1^{-1}\til{u}_{N_1}\|_{L^{2(d+1)/(d-1)}_{t,x}} \|\om_1^{-1}\til{v}_{N_2}\|_{L^{2(d+1)/(d-1)}_{t,x}}
                         \|\om \til{n}_{N_3}\|_{L^{(d+1)/2}_{t,x}}.                                             \label{5dn} 
}
We apply Proposition \ref{Str}, \eqref{c} and the Sobolev inequality, then we have 
\EQS{
  \|\om_1^{-1}\til{f}_N\|_{L^{2(d+1)/(d-1)}_{t,x}}
      &\lec \LR{N}^{1/2-1} \|f_N\|_{V^2_{K_{\pm}}} 
        =     \LR{N}^{-1/2} \|f_N\|_{V^2_{K_{\pm}}},                                                        \label{5dn2} \\
  \|\om \til{n}_{N_3}\|_{L^{(d+1)/2}_{t,x}} 
      &\lec \| |\na_x|^{d(d-5)/2(d-1)}\om \til{n}_{N_3}\|_{L^{(d+1)/2}_t L^{2(d^2-1)/(d^2-9)}_x}  \notag \\
      &\lec \| |\na_x|^{d/2-2}\om \til{n}_{N_3}\|_{V^2_{W_{\pm c}}}                                     \label{5dn3} \\
      &\lec N_3^{s_c + 1}\|n_{N_3}\|_{V^2_{W_{\pm c}}}                                                    \label{5dn4} 
}
Collecting \eqref{5dn}, \eqref{5dn2}, \eqref{5dn4} and $N_3 \lec N_1 \sim N_2$, we obtain 
\EQQS{
 |I_1| \lec N_3^{s_c} \|u_{N_1}\|_{V^2_{K_{\pm}}} \|v_{N_2}\|_{V^2_{K_{\pm}}} \|n_{N_3}\|_{V^2_{W_{\pm c}}}.     
}
In \eqref{5dn}, if we apply the H\"{o}lder inequality, the Sobolev inequality and Proposition \ref{Str}, then we have 
\EQS{
 \|\om \til{n}_{N_3}\|_{L^{(d+1)/2}_{t,x}} 
  &\lec \| \1_{[0,T)}\|_{L^{d+1}_t} \|\om n_{N_3}\|_{L^{d+1}_t L^{(d+1)/2}_x} \notag \\
  &\lec T^{1/(d+1)} \| |\na_x|^{d(d^2-4d-1)/2(d^2-1)}\om n_{N_3}\|_{L^{d+1}_t L^{2(d^2-1)/(d^2-5)}_x} \notag \\
  &\lec T^{1/(d+1)} \| |\na_x|^{(d^2-3d-2)/2(d+1)}\om n_{N_3}\|_{V^2_{W_{\pm c}}}                \label{5dn5} \\
  &\lec T^{1/(d+1)} N_3^{s'+1} \|n_{N_3}\|_{V^2_{W_{\pm c}}}.                                            \label{5dn6}
}
Collecting \eqref{5dn}, \eqref{5dn2}, \eqref{5dn6} and $N_3 \lec N_1 \sim N_2$, we obtain 
\EQQS{
 |I_1| \lec T^{1/(d+1)} N_3^{s'}  \|u_{N_1}\|_{V^2_{K_{\pm}}} \|v_{N_2}\|_{V^2_{K_{\pm}}} \|n_{N_3}\|_{V^2_{W_{\pm c}}}. 
} 
Next, we prove for $d \ge 4$ and $(u,v,n)$ are spherically symmetric functions. 
We apply the H\"{o}lder inequality to have 
\EQS{
 |I_1| \lec \|\om_1^{-1}\til{u}_{N_1}\|_{L^3_{t,x}} \|\om_1^{-1}\til{v}_{N_2}\|_{L^3_{t,x}} \|\om \til{n}_{N_3}\|_{L^3_{t,x}}. \label{4dr}
}
We apply Proposition \ref{rStr}, \eqref{c} and $N_3 \lec N_2 \sim N_1$, 
then we have  
\EQS{
  \|\om_1^{-1}\til{u}_{N_1}\|_{L^3_{t,x}} 
   &\lec \| |\na_x|^{(d-4)/6}P_{<1}\om_1^{-1}\til{u}_{N_1}\|_{V^2_{K_{\pm}}} 
             + \Bigl{\|} |\na_x|^{(d-2)/6} \sum_{N \ge 1} P_N (\om_1^{-1}\til{u}_{N_1})\Bigr{\|}_{V^2_{K_{\pm}}} \notag \\
   &\lec \LR{N_1}^{(d-2)/6-1}\Bigl( \|P_{<1}u_{N_1}\|_{V^2_{K_{\pm}}} 
             + \Bigl{\|} \sum_{N \ge 1} P_N u_{N_1}\Bigr{\|}_{V^2_{K_{\pm}}}\Bigr) \notag \\
   &\lec \LR{N_2}^{(d-8)/6}\|u_{N_1}\|_{V^2_{K_{\pm}}},                                                 \label{4dr2} \\
  \|\om_1^{-1}\til{v}_{N_2}\|_{L^3_{t,x}} &\lec \LR{N_2}^{(d-8)/6}\|v_{N_2}\|_{V^2_{K_{\pm}}},     \label{4dr3} \\
  \|\om \til{n}_{N_3}\|_{L^3_{t,x}} 
   &\lec \| |\na_x|^{(d-2)/6} \om \til{n}_{N_3}\|_{V^2_{W_{\pm c}}} 
     \lec N_3^{(d+4)/6}\|n_{N_3}\|_{V^2_{W_{\pm c}}}.                                                     \label{4dr4} 
}
From \eqref{4dr}--\eqref{4dr4}, we obtain 
\EQQS{
 |I_1| \lec \LR{N_2}^{(d-8)/3}N_3^{(d+4)/6}\|u_{N_1}\|_{V^2_{K_{\pm}}}\|v_{N_2}\|_{V^2_{K_{\pm}}} \|n_{N_3}\|_{V^2_{W_{\pm c}}}.
}
Next, we prove $(ii)$. 
For $d=4$, 
we apply the H\"{o}lder inequality, the Sobolev inequality, Remark \ref{embed}, \eqref{c}, Remark \ref{estx}, 
discarding $\om_1^{-1}$ and Lemma \ref{estX}, we obtain 
\EQQS{
 |I_2|
      &\lec \|\1_{[0,T)}\|_{L^1_t} \|\til{n}\|_{L^{\I}_t L^{16/7}_x}
                    \|\om_1^{-1}\til{v}\|_{L^{\I}_t L^{16/7}_x} \|P_{<1}\til{u}\|_{L^{\I}_t L^8_x} \\
      &\lec T \| |\na_x|^{1/4}\til{n}\|_{L^{\I}_t L^2_x} \|\LR{\na_x}^{1/4}\om_1^{-1}\til{v}\|_{L^{\I}_t L^2_x}
                    \| |\na_x|^{3/2}P_{<1}\til{u}\|_{L^{\I}_t L^2_x} \\
      &\lec T \| |\na_x|^{1/4}\til{n}\|_{V^2_{W_{\pm c}}} 
                    \|\LR{\na_x}^{1/4}\om_1^{-1}v\|_{V^2_{K_{\pm}}} \|P_{<1}u\|_{V^2_{K_{\pm}}} \\
      &\lec T \|n\|_{\dot{Y}^{1/4}_{W_{\pm c}}} \|v\|_{Y^{1/4}_{K_{\pm}}} \|P_{<1}u\|_{V^2_{K_{\pm}}}.                 
}
For $d \ge 5$, by the H\"{o}lder inequality to have 
\EQS{
 |I_2| \lec \|\til{n}\|_{L^{(d+1)/2}_{t,x}} \|\om_1^{-1}\til{v}\|_{L^{2(d+1)/(d-1)}_{t,x}}
                 \| P_{<1}\til{u}\|_{L^{2(d+1)/(d-1)}_{t,x}}.                                                                      \label{5di2a}
}
From \eqref{5dn2}, \eqref{5dn3}, Remark \ref{estx} and Lemma \ref{estX}, we obtain 
\EQS{ 
 &\|\til{n}\|_{L^{(d+1)/2}_{t,x}} \lec \|n\|_{\dot{Y}^{s_c}_{W_{\pm c}}},                                                     \label{5di2b} \\
 &\|\om_1^{-1}\til{v}\|_{L^{2(d+1)/(d-1)}_{t,x}} \lec \| \LR{\na_x}^{-1/2} v\|_{V^2_{K_{\pm}}} 
                                                         \lec \|\LR{\na_x}^{s_c}v\|_{V^2_{K_{\pm}}}  
                                                         \lec \|v\|_{Y^{s_c}_{K_{\pm}}},                                           \label{5di2c} \\
 &\|P_{<1}\til{u}\|_{L^{2(d+1)/(d-1)}_{t,x}} \lec \|\LR{\na_x}^{1/2} P_{<1}u\|_{V^2_{K_{\pm}}}
                                                   \lec \|P_{<1}u\|_{V^2_{K_{\pm}}}.                                              \label{5di2d} 
}
Collecting \eqref{5di2a}--\eqref{5di2d}, we obtain 
\EQQS{
 |I_2| \lec \|n\|_{\dot{Y}^{s_c}_{W_{\pm c}}} \|v\|_{Y^{s_c}_{K_{\pm}}} \|P_{<1}u\|_{V^2_{K_{\pm}}}.
}
Also for $d \ge 5$, from \eqref{5dn5}, Remark \ref{estx}, \eqref{5di2a}, \eqref{5di2c} and \eqref{5di2d} to have   
\EQQS{
 |I_2| \lec T^{1/(d+1)} \|n\|_{\dot{Y}^{s'}_{W_{\pm c}}} \|v\|_{Y^{s'}_{K_{\pm}}}\|P_{<1}u\|_{V^2_{K_{\pm}}}.
}
We prove for $4 \le d < 5$ and $(u,v,n)$ are spherically symmetric functions.  
Due to the operator $P_{<1}$,  
\EQQS{  
 |I_2|  
   &\lec \Bigl|\int_{\R^{1+d}} \Bigl(\sum_{N_3 \lec 1}\til{n}_{N_3}\Bigr)\Bigl(\sum_{N_2 < 1}\om_1^{-1}\til{v}_{N_2}\Bigr)
                  (\ol{P_{<1}\til{u}})dxdt\Bigr|  \\
   &\qquad + \sum_{N_2 \ge 1} \Bigl|\int_{\R^{1+d}} \Bigl(\sum_{N_3 \lec N_2} \til{n}_{N_3}\Bigr)(\om_1^{-1}\til{v}_{N_2})
                     (\ol{P_{<1}\til{u}})dxdt\Bigr|  \\                                                                               
   &=: I_{2,1} + I_{2,2}. 
}
First, we estimate $I_{2,2}$. 
We apply the H\"{o}lder inequality to have  
\EQS{
 |I_{2,2}| \lec \sum_{N_2 \ge 1}\Bigl{\|}\sum_{N_3 \lec N_2}\til{n}_{N_3}\Bigr{\|}_{L^3_{t,x}} \|\om_1^{-1}\til{v}_{N_2}\|_{L^3_{t,x}} 
                   \|P_{<1}\til{u}\|_{L^3_{t,x}}.                                                                                      \label{i22a}
}
By Proposition \ref{rStr}, \eqref{c}, $d < 5, N_3 \lec N_2 \sim N_1$ 
and the Cauchy-Schwarz inequality, then we have  
\EQS{ 
 \Bigl{\|} \sum_{N_3 \lec N_2} \til{n}_{N_3}\Bigr{\|}_{L^3_{t,x}} 
   &\lec \Bigl{\|} |\na_x|^{(d-2)/6} \sum_{N_3 \lec N_2} \til{n}_{N_3}\Bigr{\|}_{V^2_{W_{\pm c}}} \notag \\
   &\lec \Bigl(\sum_{N_3 \lec N_2} N_3^{2(5-d)/3}\Bigr)^{1/2} 
              \Bigl(\sum_{N_3 \lec N_2} N_3^{d-4}\|n_{N_3}\|_{V^2_{W_{\pm c}}}^2\Bigr)^{1/2} \notag \\
   &\lec N_2^{(5-d)/3}\|n\|_{\dot{Y}^{s_c}_{W_{\pm c}}}.                                                                      \label{i22b} 
}
From $d \ge 4$, \eqref{4dr2} and \eqref{4dr3}, we see 
\EQS{
 \|P_{<1}\til{u}\|_{L^3_{t,x}} \lec \|P_{<1}u\|_{V^2_{K_{\pm}}}, \qquad 
 \| \om_1^{-1}\til{v}_{N_2}\|_{L^3_{t,x}} \lec \LR{N_2}^{(d-8)/6}\|v_{N_2}\|_{V^2_{K_{\pm}}}.                           \label{i22c} 
}
Collecting \eqref{i22a}--\eqref{i22c}, $N_2 \ge 1$ and applying the Cauchy-Schwarz inequality, we obtain 
\EQS{
 |I_{2,2}| &\lec \sum_{N_2 \ge 1} N_2^{(2-d)/6}\|n\|_{\dot{Y}^{s_c}_{W_{\pm c}}}\|v_{N_2}\|_{V^2_{K_{\pm}}}
                     \|P_{<1}u\|_{V^2_{K_{\pm}}} \notag \\
           &\lec \|n\|_{\dot{Y}^{s_c}_{W_{\pm c}}}\|v\|_{Y^{s_c}_{W_{\pm c}}}\|P_{<1}u\|_{V^2_{K_{\pm}}}.            \label{be1}
}
Next, we estimate $I_{2,1}$. 
By the H\"{o}lder inequality to have  
\EQS{
 |I_{2,1}| \lec \Bigl{\|}\sum_{N_3 \lec 1}\til{n}_{N_3}\Bigr{\|}_{L^3_{t,x}}
                   \Bigl{\|} \sum_{N_2 < 1}\om_1^{-1}\til{v}_{N_2} \Bigr{\|}_{L^3_{t,x}} 
                     \|P_{<1}\til{u}\|_{L^3_{t,x}}.                                                                                    \label{i21A}
}
From $4 \le d < 5$, \eqref{i22b}, \eqref{i22c} and discarding $\om_1^{-1}$, we see 
\EQS{
 \Bigl{\|} \sum_{N_3 \lec 1} \til{n}_{N_3}\Bigr{\|}_{L^3_{t,x}} \lec \|n\|_{\dot{Y}^{s_c}_{W_{\pm c}}}, \qquad 
 \Bigl{\|} \sum_{N_2 < 1}\om_1^{-1}\til{v}_{N_2}\Bigr{\|}_{L^3_{t,x}} \lec \|P_{<1}v\|_{V^2_{K_{\pm}}}.               \label{i21B}
} 
Collecting \eqref{i22c}--\eqref{i21B}, we have 
\EQS{
 |I_{2,1}| \lec \|n\|_{\dot{Y}^{s_c}_{W_{\pm c}}} \|P_{<1}v\|_{V^2_{K_{\pm}}}\|P_{<1}u\|_{V^2_{K_{\pm}}} 
          \lec \|n\|_{\dot{Y}^{s_c}_{W_{\pm c}}} \|v\|_{Y^{s_c}_{K_{\pm}}}\|P_{<1}u\|_{V^2_{K_{\pm}}}.               \label{be2}
}
From \eqref{be1} and \eqref{be2}, we obtain 
$|I_2| \lec \|n\|_{\dot{Y}^{s_c}_{W_{\pm c}}} \|v\|_{Y^{s_c}_{K_{\pm}}}\|P_{<1}u\|_{V^2_{K_{\pm}}}$. 
We prove $(iii)$ for $d=4$ below.  
We apply the H\"{o}lder inequality, the Sobolev inequality, Proposition \ref{Str} and \eqref{c}, then we have  
\EQS{
 |I_3| 
  &\lec \|\1_{[0,T)}\|_{L^4_t}\Bigl{\|}\sum_{N_3 \lec N_2}\til{n}_{N_3}\Bigr{\|}_{L^{20/9}_t L^5_x}
                   \|\om_1^{-1}\til{v}_{N_2}\|_{L^{20/3}_t L^{5/2}_x} \|\til{u}_{N_1}\|_{L^{20/3}_t L^{5/2}_x}  \notag \\
  &\lec T^{1/4}\Bigl{\|} |\na_x|^{3/4}\sum_{N_3 \lec N_2}n_{N_3}\Bigr{\|}_{V^2_{W_{\pm c}}} \LR{N_2}^{1/4-1}
                   \|v_{N_2}\|_{V^2_{K_{\pm}}} \LR{N_1}^{1/4}\|u_{N_1}\|_{V^2_{K_{\pm}}}.                      \label{2}
}
By the Cauchy-Schwarz inequality, we have 
\EQS{
 \Bigl{\|} |\na_x|^{3/4}\sum_{N_3 \lec N_2}n_{N_3}\Bigr{\|}_{V^2_{W_{\pm c}}}   
    &\lec \sum_{N_3 \lec N_2} N_3^{3/4}\|n_{N_3}\|_{V^2_{W_{\pm c}}} \notag \\
    &\lec \Bigl(\sum_{N_3 \lec N_2}N_3\Bigr)^{1/2}\Bigl(\sum_{N_3 \lec N_2}
               N_3^{1/2}\|n_{N_3}\|_{V^2_{W_{\pm c}}}^2\Bigr)^{1/2} \notag \\
    &\lec N_2^{1/2}\|n\|_{\dot{Y}^{1/4}_{W_{\pm c}}}.                                                                          \label{2'}
}
Collecting \eqref{2}, \eqref{2'} and $N_1 \sim N_2$, we obtain  
\EQQS{
  |I_3| \lec T^{1/4} \|n\|_{\dot{Y}^{1/4}_{W_{\pm c}}} \|v_{N_2}\|_{V^2_{K_{\pm}}} \|u_{N_1}\|_{V^2_{K_{\pm}}}.    
}
We prove for $d \ge 5$. 
We apply the H\"{o}lder inequality, \eqref{5dn3}, \eqref{5dn2}, Remark \ref{estx} and $N_1 \sim N_2$, then we have  
\EQS{
 |I_3|  
   &\lec \Bigl{\|}\sum_{N_3 \lec N_2} \til{n}_{N_3}\Bigr{\|}_{L^{(d+1)/2}_{t,x}} \|\om_1^{-1}\til{v}_{N_2}\|_{L^{2(d+1)/(d-1)}_{t,x}}
              \| \til{u}_{N_1}\|_{L^{2(d+1)/(d-1)}_{t,x}}                                                          \label{5dn7} \\
   &\lec \Bigl{\|} |\na_x|^{s_c}\sum_{N_3 \lec N_2} \til{n}_{N_3}\Bigr{\|}_{V^2_{W_{\pm c}}} 
              \LR{N_2}^{-1/2} \|v_{N_2}\|_{V^2_{K_{\pm}}} \LR{N_1}^{1/2} \|u_{N_1}\|_{V^2_{K_{\pm}}} \notag \\  
   &\lec \|n\|_{\dot{Y}^{s_c}_{W_{\pm c}}} \|v_{N_2}\|_{V^2_{K_{\pm}}} \|u_{N_1}\|_{V^2_{K_{\pm}}}.    \notag   
}
From \eqref{5dn5} and Remark \ref{estx},  we have  
\EQS{
 \Bigl{\|}\sum_{N_3 \lec N_2} \til{n}_{N_3}\Bigr{\|}_{L^{(d+1)/2}_{t,x}} 
   \lec T^{1/(d+1)}\|n\|_{\dot{Y}^{s'}_{W_{\pm c}}}.                                                          \label{5dn8}
}
Collecting \eqref{5dn2}, \eqref{5dn7} and \eqref{5dn8}, we obtain 
\EQQS{
 |I_3| \lec T^{1/(d+1)}\|n\|_{\dot{Y}^{s'}_{W_{\pm c}}} \|v_{N_2}\|_{V^2_{K_{\pm}}} \|u_{N_1}\|_{V^2_{K_{\pm}}}.
}
When $4 \le d < 5$ and $(u,v,n)$ are spherically symmetric functions, we apply the H\"{o}lder inequality to have  
\EQS{
 |I_3| \lec \Bigl{\|}\sum_{N_3 \lec N_2}\til{n}_{N_3}\Bigr{\|}_{L^3_{t,x}} 
                \|\om_1^{-1}\til{v}_{N_2}\|_{L^3_{t,x}} \|\til{u}_{N_1}\|_{L^3_{t,x}}. 
                                                                                                                    \label{4dr5}
}
From \eqref{4dr2}, \eqref{4dr3}, we see 
\EQS{
 \|\til{u}_{N_1}\|_{L^3_{t,x}} \lec \LR{N_2}^{(d-2)/6} \|u_{N_1}\|_{V^2_{K_{\pm}}}, \qquad 
 \| \om_1^{-1}\til{v}_{N_2}\|_{L^3_{t,x}} \lec \LR{N_2}^{(d-8)/6}\|v_{N_2}\|_{V^2_{K_{\pm}}}.      \label{4dr6} 
}
Collecting \eqref{4dr5}, \eqref{i22b} and \eqref{4dr6}, we obtain 
\EQQS{
 |I_3| \lec \|n\|_{\dot{Y}^{s_c}_{W_{\pm c}}} \|v_{N_2}\|_{V^2_{K_{\pm}}} \|u_{N_1}\|_{V^2_{K_{\pm}}}.    
}
We prove $(iv)$. 
The estimate for $I_6$ is the same manner as the estimate for $I_4$, so we only estimate $I_4, I_5$. 
First, we estimate $I_4$ for $d=4$. 
We apply the H\"{o}lder inequality, Proposition \ref{modulation}, the Sobolev inequality, 
Lemma \ref{p}, Proposition \ref{Str}, \eqref{c}, $\LR{N_1} \sim N_1 \ge 1$ and Lemma \ref{estX} to have   
\EQQS{
 |I_4|  
  &\lec \|Q_{\ge M}\til{n}_{N_3}\|_{L^2_{t,x}}\Bigl{\|}\sum_{N_2 \ll N_1}Q_2 \om_1^{-1}\til{v}_{N_2}\Bigr{\|}_{L^4_t L^{16/3}_x} 
             \|Q_1 \til{u}_{N_1}\|_{L^4_t L^{16/5}_x} \\
  &\lec N_1^{-1/2}\|\til{n}_{N_3}\|_{V^2_{W_{\pm c}}} \Bigl{\|} \LR{\na_x}^{5/4}\sum_{N_2 \ll N_1}Q_2 \om_1^{-1}
             \til{v}_{N_2}\Bigr{\|}_{L^4_t L^2_x} \| \LR{\na_x}^{1/12}Q_1 \til{u}_{N_1}\|_{L^4_t L^3_x} \\
  &\lec N_1^{-1/2}\|n_{N_3}\|_{V^2_{W_{\pm c}}} T^{1/4}\Bigl{\|} \LR{\na_x}^{5/4}\sum_{N_2 \ll N_1}\om_1^{-1}
             \til{v}_{N_2}\Bigr{\|}_{V^2_{K_{\pm}}} \LR{N_1}^{1/2}\|u_{N_1}\|_{V^2_{K_{\pm}}} \\
  &\lec T^{1/4}\|n_{N_3}\|_{V^2_{W_{\pm c}}} \|v\|_{Y^{1/4}_{K_{\pm}}} \|u_{N_1}\|_{V^2_{K_{\pm}}}.
}
Next, we prove for $d \ge 4$. 
We apply the H\"{o}lder inequality to have  
\EQS{
 |I_4|   
  \lec \|Q_{\ge M}\til{n}_{N_3}\|_{L^2_{t,x}}\Bigl{\|}\sum_{N_2 \ll N_1}Q_2 \om_1^{-1}\til{v}_{N_2}\Bigr{\|}_{L^{d+1}_{t,x}} 
             \|Q_1 \til{u}_{N_1}\|_{L^{2(d+1)/(d-1)}_{t,x}}.                                                               \label{I4}
}
By Proposition \ref{modulation}, Proposition \ref{Str}, \eqref{5dn2} and \eqref{c}, we have 
\EQS{
 &\|Q_{\ge M}\til{n}_{N_3}\|_{L^2_{t,x}} \lec N_1^{-1/2}\|n_{N_3}\|_{V^2_{W_{\pm c}}},                           \label{I4a} \\
 &\|Q_1 \til{u}_{N_1}\|_{L^{2(d+1)/(d-1)}_{t,x}} \lec \LR{N_1}^{1/2} \|u_{N_1}\|_{V^2_{K_{\pm}}}.              \label{I4b} 
}
We apply the Sobolev inequality, Proposition \ref{Str}, \eqref{c} and Lemma \ref{estX}, we have
\EQS{
 \Bigl{\|}\sum_{N_2 \ll N_1}Q_2 \om_1^{-1}\til{v}_{N_2}\Bigr{\|}_{L^{d+1}_{t,x}} 
  &\lec \Bigl{\|} \LR{\na_x}^{d(d-3)/2(d-1)}\sum_{N_2 \ll N_1}Q_2 \om_1^{-1}
               \til{v}_{N_2}\Bigr{\|}_{L^{d+1}_t L^{2(d^2-1)/(d^2-5)}_x}  \notag \\
  &\lec \Bigl{\|} \LR{\na_x}^{d(d-3)/2(d-1)+1/(d-1)-1}\sum_{N_2 \ll N_1}\til{v}_{N_2}\Bigr{\|}_{V^2_{K_{\pm}}} \notag \\
  &\lec \|v\|_{Y^{s_c}_{K_{\pm}}}.                                                                                        \label{I4c}
}
Collecting \eqref{I4}--\eqref{I4c} and $N_1 \ge 1$, we obtain  
\EQQS{
 |I_4| \lec \|n_{N_3}\|_{V^2_{W_{\pm c}}} \|v\|_{Y^{s_c}_{K_{\pm}}} \|u_{N_1}\|_{V^2_{K_{\pm}}}. 
} 
For $d \ge 5$, in \eqref{I4}, if we apply the Sobolev inequality, Lemma \ref{p} and Lemma \ref{estX} to have 
\EQS{
 \Bigl{\|}\sum_{N_2 \ll N_1}Q_2 \om_1^{-1}\til{v}_{N_2}\Bigr{\|}_{L^{d+1}_{t,x}} 
  &\lec \Bigl{\|} \LR{\na_x}^{d(d-1)/2(d+1)}\sum_{N_2 \ll N_1}Q_2 \om_1^{-1} \til{v}_{N_2}\Bigr{\|}_{L^{d+1}_t L^2_x}  \notag \\
  &\lec T^{1/(d+1)} \Bigl{\|} \LR{\na_x}^{(d^2-3d-2)/2(d+1)}\sum_{N_2 \ll N_1}\til{v}_{N_2}\Bigr{\|}_{V^2_{K_{\pm}}}  \notag \\
  &\lec T^{1/(d+1)} \|v\|_{Y^{s'}_{K_{\pm}}}.                                                                             \label{I4d}
}
Collecting \eqref{I4}--\eqref{I4b}, \eqref{I4d} and $N_1 \ge 1$, we obtain 
\EQQS{
 |I_4| \lec T^{1/(d+1)} \|n_{N_3}\|_{V^2_{W_{\pm c}}} \|v\|_{Y^{s'}_{K_{\pm}}} \|u_{N_1}\|_{V^2_{K_{\pm}}}.
}
Next, we prove $I_5$. 
When $d=4$, by the H\"{o}lder inequality, the Sobolev inequality, Lemma \ref{p}, Proposition \ref{Str}, 
\eqref{c}, Proposition \ref{modulation}, $N_1 \sim N_3$ and Lemma \ref{estX}, we have 
\EQQS{
 |I_5|  
  &\lec \|Q_3 \til{n}_{N_3}\|_{L^4_t L^{16/5}_x}\Bigl{\|}\sum_{N_2 \ll N_1}Q_{\ge M} 
             \om_1^{-1}\til{v}_{N_2}\Bigr{\|}_{L^2_t L^{16/3}_x} \|Q_1 \til{u}_{N_1}\|_{L^4_t L^2_x} \\
  &\lec \| |\na_x|^{1/12}Q_3 \til{n}_{N_3}\|_{L^4_t L^3_x}\Bigl{\|} \LR{\na_x}^{5/4}\sum_{N_2 \ll N_1}Q_{\ge M}
             \om_1^{-1}\til{v}_{N_2}\Bigr{\|}_{L^2_{t,x}} T^{1/4}\|u_{N_1}\|_{V^2_{K_{\pm}}} \\
  &\lec N_3^{1/2}\|n_{N_3}\|_{V^2_{W_{\pm c}}} N_1^{-1/2} \Bigl{\|} \LR{\na_x}^{5/4}\sum_{N_2 \ll N_1}\om_1^{-1}
              \til{v}_{N_2}\Bigr{\|}_{V^2_{K_{\pm}}} T^{1/4}\|u_{N_1}\|_{V^2_{K_{\pm}}} \\
  &\lec T^{1/4}\|n_{N_3}\|_{V^2_{W_{\pm c}}} \|v\|_{Y^{1/4}_{K_{\pm}}} \|u_{N_1}\|_{V^2_{K_{\pm}}}.
}
For $d \ge 5$, by the H\"{o}lder inequality, we have 
\EQS{
 |I_5|  
  \lec \|Q_3 \til{n}_{N_3}\|_{L^{2(d+1)/(d-1)}_{t,x}}\Bigl{\|}\sum_{N_2 \ll N_1}Q_{\ge M} 
             \om_1^{-1}\til{v}_{N_2}\Bigr{\|}_{L^2_t L^{d+1}_x} 
                \|Q_1 \til{u}_{N_1}\|_{L^{d+1}_t L^2_x}.                                                                    \label{I5}
}
From \eqref{I4b}, $N_3 \sim N_1$ and Lemma \ref{p}, we have 
\EQS{
 &\|Q_3 \til{n}_{N_3}\|_{L^{2(d+1)/(d-1)}_{t,x}} \lec \LR{N_1}^{1/2} \|n_{N_3}\|_{V^2_{W_{\pm c}}},            \label{I5a} \\
 &\|Q_1 \til{u}_{N_1}\|_{L^{d+1}_t L^2_x} \lec T^{1/(d+1)} \|u_{N_1}\|_{V^2_{K_{\pm}}}.                          \label{I5b} 
}
We apply the Sobolev inequality, Proposition \ref{modulation}, \eqref{c} and Lemma \ref{estX}, we have 
\EQS{
 \Bigl{\|}\sum_{N_2 \ll N_1}Q_{\ge M} \om_1^{-1}\til{v}_{N_2}\Bigr{\|}_{L^2_t L^{d+1}_x} 
  &\lec \Bigl{\|} \LR{\na_x}^{d(d-1)/2(d+1)}\sum_{N_2 \ll N_1} Q_{\ge M}\om_1^{-1}\til{v}_{N_2}\Bigr{\|}_{L^2_{t,x}} \notag \\
  &\lec N_1^{-1/2} \Bigl{\|} \LR{\na_x}^{(d^2-3d-2)/2(d+1)} \sum_{N_2 \ll N_1}v_{N_2}\Bigr{\|}_{V^2_{K_{\pm}}} \notag \\
  &\lec N_1^{-1/2} \|v\|_{Y^{s'}_{K_{\pm}}}.                                                                              \label{I5c}
}
Collecting \eqref{I5}--\eqref{I5c} and $N_1 \ge 1$, we obtain 
\EQQS{
 |I_5| \lec T^{1/(d+1)}\|n_{N_3}\|_{V^2_{W_{\pm c}}} \|v\|_{Y^{s'}_{K_{\pm}}} \|u_{N_1}\|_{V^2_{K_{\pm}}}. 
}
Finally, we prove for $(u,v,n)$ are spherically symmetric functions and $d \ge 4$.  
By the H\"{o}lder inequality, Proposition \ref{rStr}, \eqref{c}, the Sobolev inequality, 
$1 \le N_1 \sim N_3$, Proposition \ref{modulation} and Lemma \ref{estX}, we have 
\EQQS{
 |I_5| 
  &\lec \|Q_3 \til{n}_{N_3}\|_{L^4_t L^{2d/(d-1)}_x}\Bigl{\|}\sum_{N_2 \ll N_1}Q_{\ge M} 
                     \om_1^{-1}\til{v}_{N_2}\Bigr{\|}_{L^2_t L^d_x} \|Q_1 \til{u}_{N_1}\|_{L^4_t L^{2d/(d-1)}_x} \\
  &\lec N_3^{1/4}\|n_{N_3}\|_{V^2_{W_{\pm c}}}\Bigl{\|} \LR{\na_x}^{(d-2)/2}\sum_{N_2 \ll N_1}
             Q_{\ge M} \om_1^{-1}\til{v}_{N_2}\Bigr{\|}_{L^2_{t,x}} \LR{N_1}^{1/4}\|u_{N_1}\|_{V^2_{K_{\pm}}} \\
  &\lec N_1^{1/2}\|n_{N_3}\|_{V^2_{W_{\pm c}}} N_1^{-1/2} \Bigl{\|} \LR{\na_x}^{(d-2)/2}\sum_{N_2 \ll N_1} \om_1^{-1} 
             \til{v}_{N_2}\Bigr{\|}_{V^2_{K_{\pm}}} \|u_{N_1}\|_{V^2_{K_{\pm}}} \\
  &\lec \|n_{N_3}\|_{V^2_{W_{\pm c}}} \|v\|_{Y^{s_c}_{K_{\pm}}} \|u_{N_1}\|_{V^2_{K_{\pm}}}.
}
\end{proof}

\section{Bilinear estimates}
\begin{prop}  \label{BE}
Let $(\th, s) = (1/4,1/4)$ for $d = 4$ and 
$(\th, s) = (1/(d+1), (d^2-3d-2)/2(d+1))$ for $d \ge 5$. For sufficiently small $T > 0$, 
\EQS{
 &\| I_{T,K_{\pm}}(n,v)\|_{Z^s_{K_{\pm}}} 
      \lec T^{\th} \|n\|_{\dot{Y}^s_{W_{\pm c}}} \|v\|_{Y^s_{K_{\pm}}},                     \label{BEKG} \\
 &\| I_{T,\, W_{\pm c}}(u,v)\|_{\dot{Z}^s_{W_{\pm c}}} 
      \lec T^{\th} \|u\|_{Y^s_{K_{\pm}}} \|v\|_{Y^s_{K_{\pm}}}.                               \label{BEW}
}
Moreover, we assume $(u,v,n)$ are spherically symmetric functions. 
Then for $d \ge 4$ and for all $0 < T < \I$, \eqref{BEKG}, \eqref{BEW} also holds with $(\th, s) = (0, d/2-2)$.   
\end{prop}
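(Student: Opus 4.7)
The plan is to reduce both inequalities to the trilinear estimates of Lemma \ref{tri} via duality and a Littlewood--Paley decomposition, following the same template in both \eqref{BEKG} and \eqref{BEW}. For \eqref{BEKG}, applying Corollary \ref{U2A} to $P_{N_1}I_{T,K_\pm}(n,v)$ (which is absolutely continuous on compact intervals) expresses
\begin{align*}
\|P_{N_1}I_{T,K_\pm}(n,v)\|_{U^2_{K_\pm}} = \sup_{\|w\|_{V^2_{K_\pm}}=1,\, P_{N_1}w = w}\Bigl|\int_{\R^{1+d}} \1_{[0,T]}(t)\, n\,(\omega_1^{-1} v)\,\overline{w}\, dx\, dt\Bigr|,
\end{align*}
and a parallel identity with $P_{<1}$ handles the low-frequency piece of the $Z^s_{K_\pm}$-norm. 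The wave Duhamel term is dualized similarly, producing the pairing $\int \1_{[0,T]}(\omega_1^{-1}u)(\overline{\omega_1^{-1}v})(\overline{\omega w})$ against a unit-norm $V^2_{W_{\pm c}}$-function $w$ localized at the output frequency.

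After Littlewood--Paley decomposing $n = \sum_{N_3}n_{N_3}$ and $v = \sum_{N_2}v_{N_2}$, the frequency interactions split into three regimes. Regime (A), $N_3 \lec N_2 \sim N_1$, is handled by $I_3$ of Lemma \ref{tri}(iii) for \eqref{BEKG} (and by $I_2$ of Lemma \ref{tri}(ii) when the output frequency is $<1$), and by $I_1$ of Lemma \ref{tri}(i) for \eqref{BEW}. Regime (B), $N_1 \lec N_2 \sim N_3$, is treated by an analogous H\"older/Strichartz chain after relabeling, since it amounts to a comparable pairing with $w_{N_1}$ playing the former role of $n_{N_3}$. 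Once Lemma \ref{tri} is applied, the bound carries a dyadic weight of a single frequency, and one sums in $N_1, N_2, N_3$ against the $\ell^2$-weighted $V^2$-norms defining $\dot{Y}^s_{W_{\pm c}}$ and $Y^s_{K_\pm}$ via Cauchy--Schwarz.

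The resonant regime (C), $N_2 \ll N_1 \sim N_3$, requires an additional modulation decomposition. Inserting $1 = Q_{<M} + Q_{\ge M}$ with $M = \varepsilon N_1$ into each of $n_{N_3}, v_{N_2}, w_{N_1}$, Lemma \ref{Recovery} (applied with $|\xi_1| \gg |\xi_2|$ so that $|\xi_3| \sim |\xi_1|$) forces the all-$Q_{<M}$ contribution to vanish once $\varepsilon$ is small; hence at least one factor carries modulation $\gtrsim N_1$. The three resulting subcases are precisely the integrals $I_4, I_5, I_6$ from Lemma \ref{tri}(iv), whose bounds produce the factor $T^\th$ and isolate $\|v\|_{Y^s_{K_\pm}}$ at the appropriate regularity. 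A final dyadic sum in $N_1 \sim N_3$ against the weight $N_1^{2s}$ closes the estimate.

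The main obstacle is subcase $I_5$, where the high-modulation factor is the low-frequency piece $v_{N_2}$. Here Proposition \ref{modulation} gives only the gain $M^{-1/2} \sim N_1^{-1/2}$, which must cancel the derivative loss from the product exactly. In the non-radial case for $d \ge 5$, the surviving Strichartz bound for $Q_{\ge M}\omega_1^{-1}v_{N_2}$ only comes in $L^{d+1}_t$ form and therefore requires paying the factor $\|\1_{[0,T]}\|_{L^{d+1}_t} = T^{1/(d+1)}$ (cf.\ \eqref{5dn5}); this is precisely what prevents us from reaching the critical index and forces $s = s' = s_c + 1/(d+1)$. In the spherically symmetric setting the radial Strichartz estimates of Proposition \ref{rStr} suffice without any time-localization, which is why one can take $\th = 0$, $s = s_c$, and $T > 0$ arbitrary.
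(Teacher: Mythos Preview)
Your proposal is correct and follows essentially the same approach as the paper: duality via Corollary \ref{U2A}, Littlewood--Paley decomposition into the three frequency regimes, and a modulation decomposition (with Lemma \ref{Recovery} forcing one factor to carry $Q_{\ge M}$) in the regime $N_2 \ll N_1 \sim N_3$, all fed through Lemma \ref{tri}. One small imprecision: in the $I_5$ case for $d\ge 5$ the factor $T^{1/(d+1)}$ does not come from the high-modulation piece $Q_{\ge M}\omega_1^{-1}\til{v}_{N_2}$ (that piece is placed in $L^2_t L^{d+1}_x$ via Proposition \ref{modulation} plus Sobolev, costing exactly the $s'$ derivatives), but rather from the remaining factor $Q_1\til{u}_{N_1}$, which must be estimated in $L^{d+1}_t L^2_x$ via Lemma \ref{p}; the reference \eqref{5dn5} you cite is the analogous mechanism in a different subcase.
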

\begin{proof}
We denote $\til{u}_{N_1}:=\1_{[0,T)}P_{N_1}u, \til{v}_{N_2}:=\1_{[0,T)}P_{N_2}v, \til{n}_{N_3}:=\1_{[0,T)}P_{N_3}n$. 
First, we prove \eqref{BEKG}. 
\EQQS{
  \| I_{T,K_{\pm}}(n,v)\|_{Z^s_{K_{\pm}}}^2 \lec \sum_{i=0}^3 J_i   
}
where 
\EQQS{    
 &J_0 := \Bigl{\|} \int_0^t \1_{[0,T)}(t')K_{\pm}(t-t') P_{<1}(\til{n} (\om_1^{-1} \til{v}))(t')dt'\Bigr{\|}_{U^2_{K_{\pm}}}^2, \\
 &J_1 := \sum_{N_1 \ge 1} N_1^{2s} \Bigl{\|} \int_0^t \1_{[0,T)}(t') K_{\pm}(t-t') \sum_{N_2 \sim N_1} \sum_{N_3 \lec N_2} 
              P_{N_1}(\til{n}_{N_3}(\om_1^{-1}\til{v}_{N_2}))(t')dt' \Bigr{\|}_{U^2_{W_\pm}}^2,  \\
 &J_2 := \sum_{N_1 \ge 1} N_1^{2s} \Bigl{\|} \int_0^t \1_{[0,T)}(t') K_{\pm}(t-t') \sum_{N_2 \ll N_1} \sum_{N_3 \sim N_1} 
              P_{N_1}(\til{n}_{N_3}(\om_1^{-1}\til{v}_{N_2}))(t')dt' \Bigr{\|}_{U^2_{W_\pm}}^2,  \\  
 &J_3 := \sum_{N_1 \ge 1} N_1^{2s} \Bigl{\|} \int_0^t \1_{[0,T)}(t') K_{\pm}(t-t') \sum_{N_2 \gg N_1} \sum_{N_3 \sim N_2} 
              P_{N_1}(\til{n}_{N_3}(\om_1^{-1}\til{v}_{N_2}))(t')dt' \Bigr{\|}_{U^2_{W_\pm}}^2. 
}
By Corollary \ref{U2A}, we have 
\EQS{
 J_0^{1/2} 
      \lec \sup_{\|u\|_{V^2_{K_{\pm}}}=1} \Bigl{|}\int_{\R^{1+d}} \til{n}(\om_1^{-1}\til{v})(\ol{P_{<1}\til{u}})dxdt\Bigr{|}.   \label{j0}
}
For $d=4$ and $s=1/4$, from \eqref{j0}, Lemma \ref{tri} $(ii)$ and $\|P_{<1}u\|_{V^2_{K_{\pm}}} \lec \|u\|_{V^2_{K_{\pm}}}$, 
we obtain  
\EQQS{
 J_0^{1/2} \lec T\|n\|_{\dot{Y}^{1/4}_{W_{\pm c}}}\|v\|_{Y^{1/4}_{K_{\pm}}}.
}
We apply Corollary \ref{U2A} to have  
\EQS{
 J_1 \lec \sum_{N_1 \ge 1} N_1^{2s} \sup_{\|u\|_{V^2_{K_{\pm}}}=1} \Bigl{|}\sum_{N_2 \sim N_1} \sum_{N_3 \lec N_2}
                 \int_{\R^{1+d}} \til{n}_{N_3} (\om_1^{-1}\til{v}_{N_2}) \ol{\til{u}_{N_1}} dxdt\Bigr{|}^2.                           \label{j1}
}
For $d=4$ and $s=1/4$, by \eqref{j1}, $N_1 \sim N_2$, Lemma \ref{tri} $(iii)$ and 
$\|u_{N_1}\|_{V^2_{K_{\pm}}} \lec \|u\|_{V^2_{K_{\pm}}}$, we have  
\EQQS{
 J_1 \lec \sum_{N_2 \gec 1}N_2^{1/2}T^{1/2}\|n\|_{\dot{Y}^{1/4}_{W_{\pm c}}}^2 \|v_{N_2}\|_{V^2_{K_{\pm}}}^2 
      \lec T^{1/2}\|n\|_{\dot{Y}^{1/4}_{W_{\pm c}}}^2 \|v\|_{Y^{1/4}_{K_{\pm}}}^2.
}
For the estimate of $J_2$, we take $M=\e N_1$ for sufficiently small $\e >0$. 
Then, from Lemma \ref{Recovery}, we have  
\EQQS{
   &Q_{<M} \bigl( (Q_{<M} \til{n}_{N_3})(Q_{<M} \om_1^{-1}\til{v}_{N_2}) \bigr) \\
   &= Q_{<M} \Bigl[ \F^{-1}\Bigl( \int_{\ta_1 = \ta_2 + \ta_3,\, \xi_1 = \xi_2 + \xi_3} 
       \widehat{(Q_{<M} \til{n}_{N_3})}(\ta_3, \xi_3) \widehat{(Q_{<M} \om_1^{-1}\til{v}_{N_2})}(\ta_2, \xi_2) \Bigr) \Bigr]
   =0
}
when $N_1 \gg N_2$. 
Therefore, 
\EQQS{
 \til{n}_{N_3} (\om_1^{-1}\til{v}_{N_2}) = \sum_{i=1}^3 F_i, 
}
where 
\EQQS{
 &F_1 := Q_1\bigl( (Q_{\ge M} \til{n}_{N_3}) (Q_2 \om_1^{-1}\til{v}_{N_2})\bigr), \qquad  
   F_2 := Q_1\bigl( (Q_3 \til{n}_{N_3}) (Q_{\ge M} \om_1^{-1}\til{v}_{N_2})\bigr), \notag \\
 &F_3 := Q_{\ge M}\bigl((Q_3 \til{n}_{N_3}) (Q_2 \om_1^{-1}\til{v}_{N_2})\bigr).
}
For the estimate of $F_1$, 
we apply Corollary \ref{U2A} to have 
\EQS{
 &\sum_{N_1 \ge 1} N_1^{2s} \Bigl{\|} \int_0^t \1_{[0,T)}(t') K_{\pm}(t-t') \sum_{N_2 \ll N_1} \sum_{N_3 \sim N_1} 
            P_{N_1}F_1(t') dt' \Bigr{\|}_{U^2_{W_\pm}}^2  \notag \\
 &\lec \sum_{N_1 \ge 1} N_1^{2s} \sup_{\|u\|_{V^2_{K_{\pm}}}=1} \Bigl{|} \sum_{N_2 \ll N_1} \sum_{N_3 \sim N_1} 
            \int_{\R^{1+d}} (Q_{\ge M}\til{n}_{N_3}) (Q_2 \om_1^{-1}\til{v}_{N_2} ) (\ol{Q_1 \til{u}_{N_1}}) dxdt\Bigr{|}^2.  \label{j2a}
}
For $d=4, s=1/4$, from Lemma \ref{tri} $(iv), N_3 \sim N_1 \ge 1$ and 
$\|u_{N_1}\|_{V^2_{K_{\pm}}} \lec \|u\|_{V^2_{K_{\pm}}}$, the right-hand side of \eqref{j2a} is bounded by  
\EQS{
   T^{1/2} \sum_{N_3 \gec 1} N_3^{1/2} \|n_{N_3}\|_{V^2_{W_{\pm c}}}^2 \|v\|_{Y^{1/4}_{K_{\pm}}}^2 
   \lec T^{1/2} \|n\|_{\dot{Y}^{1/4}_{W_{\pm c}}}^2 \|v\|_{Y^{1/4}_{K_{\pm}}}^2.                                         \label{J21}
}
For the estimate of $F_2$, 
we apply Corollary \ref{U2A} to have 
\EQS{
 &\sum_{N_1 \ge 1} N_1^{2s} \Bigl{\|} \int_0^t \1_{[0,T)}(t') K_{\pm}(t-t') \sum_{N_2 \ll N_1} \sum_{N_3 \sim N_1} 
    P_{N_1}F_2(t') dt' \Bigr{\|}_{U^2_{W_\pm}}^2 \notag \\
 &\lec \sum_{N_1 \ge 1} N_1^{2s} \sup_{\|u\|_{V^2_{K_{\pm}}}=1} \Bigl{|} \sum_{N_2 \ll N_1} \sum_{N_3 \sim N_1} 
            \int_{\R^{1+d}} (Q_3 \til{n}_{N_3}) (Q_{\ge M} \om_1^{-1}\til{v}_{N_2} ) (\ol{Q_1 \til{u}_{N_1}}) dxdt\Bigr{|}^2.  \label{j2b}
}
For $d=4, s=1/4$, we apply Lemma \ref{tri} $(iv), N_3 \sim N_1 \ge 1$ and 
$\|u_{N_1}\|_{V^2_{K_{\pm}}} \lec \|u\|_{V^2_{K_{\pm}}}$, 
then the right-hand side of \eqref{j2b} is bounded by     
\EQS{
 T^{1/2} \sum_{N_3 \gec 1} N_3^{1/2} \|n_{N_3}\|_{V^2_{W_{\pm c}}}^2 \|v\|_{Y^{1/4}_{K_{\pm}}}^2 
   \lec T^{1/2} \|n\|_{\dot{Y}^{1/4}_{W_{\pm c}}}^2 \|v\|_{Y^{1/4}_{K_{\pm}}}^2.                                         \label{J22}
}
For the estimate for $F_3$, 
we apply Corollary \ref{U2A} to have 
\EQS{
 &\sum_{N_1 \ge 1} N_1^{2s} \Bigl{\|} \int_0^t \1_{[0,T)}(t') K_{\pm}(t-t') \sum_{N_2 \ll N_1} \sum_{N_3 \sim N_1} 
    P_{N_1}F_3(t') dt' \Bigr{\|}_{U^2_{W_\pm}}^2 \notag \\
 &\lec \sum_{N_1 \ge 1} N_1^{2s} \sup_{\|u\|_{V^2_{K_{\pm}}}=1} \Bigl{|} \sum_{N_2 \ll N_1} \sum_{N_3 \sim N_1} 
            \int_{\R^{1+d}} (Q_3 \til{n}_{N_3}) (Q_2 \om_1^{-1}\til{v}_{N_2} ) (\ol{Q_{\ge M} \til{u}_{N_1}}) dxdt\Bigr{|}^2.   \label{j2c}
}
For $d=4, s=1/4$, we apply Lemma \ref{tri} $(iv), N_3 \sim N_1 \ge 1$ and 
$\|u_{N_1}\|_{V^2_{K_{\pm}}} \lec \|u\|_{V^2_{K_{\pm}}}$, 
then the right-hand side of \eqref{j2c} is bounded by     
\EQS{
 T^{1/2} \sum_{N_3 \gec 1} N_3^{1/2} \|n_{N_3}\|_{V^2_{W_{\pm c}}}^2 \|v\|_{Y^{1/4}_{K_{\pm}}}^2 
   \lec T^{1/2} \|n\|_{\dot{Y}^{1/4}_{W_{\pm c}}}^2 \|v\|_{Y^{1/4}_{K_{\pm}}}^2.                                         \label{J23}
}
Collecting \eqref{J21}, \eqref{J22} and \eqref{J23}, we obtain 
$J_2 \lec T^{1/2}\|n\|_{\dot{Y}^{1/4}_{W_{\pm c}}}^2 \|v\|_{Y^{1/4}_{K_{\pm}}}^2$. 
By Corollary \ref{U2A} and the triangle inequality to have 
\EQS{
 J_3 &\lec \sum_{N_1 \ge 1} N_1^{2s} \sup_{\|u\|_{V^2_{K_{\pm}}}=1} \Bigl{|}\sum_{N_2 \gg N_1} \sum_{N_3 \sim N_2} 
               \int_{\R^{1+d}} \til{n}_{N_3} (\om_1^{-1}\til{v}_{N_2})\ol{\til{u}_{N_1}} dxdt\Bigr{|}^2 \notag \\ 
      &\lec \sum_{N_1 \ge 1} N_1^{2s} \Bigl{(}\sum_{N_2 \gg N_1} \sum_{N_3 \sim N_2} \sup_{\|u\|_{V^2_{K_{\pm}}}=1} \Bigl{|}
               \int_{\R^{1+d}} \til{n}_{N_3} (\om_1^{-1}\til{v}_{N_2})\ol{\til{u}_{N_1}} dxdt\Bigr{|}\Bigr{)}^2.                   \label{j3a}
}
By the same manner as the estimate for Lemma \ref{tri} $(iii)$, 
for $d=4, s=1/4$, we have 
\EQS{
 \Bigl{|}\int_{\R^{1+4}} \til{n}_{N_3} (\om_1^{-1}\til{v}_{N_2})\ol{\til{u}_{N_1}} dxdt\Bigr{|} 
  \lec T^{1/4} N_3^{1/4} \|n_{N_3}\|_{V^2_{W_{\pm c}}} \|v_{N_2}\|_{V^2_{K_{\pm}}} \|u_{N_1}\|_{V^2_{K_{\pm}}}.   \label{j3b}
}
From \eqref{j3b}, the right-hand side of \eqref{j3a} is bounded by  
\EQQS{
 \sum_{N_1 \ge 1} \Bigl{(}\sum_{N_2 \gg N_1} \sum_{N_3 \sim N_2} N_1^{1/4} T^{1/4} N_3^{1/4} \|n_{N_3}\|_{V^2_{W_{\pm c}}} 
       \|v_{N_2}\|_{V^2_{K_{\pm}}}\Bigr{)}^2.
}
Hence, $\| \cdot \|_{l^2 l^1} \lec \| \cdot \|_{l^1 l^2}$ and the Cauchy-Schwarz inequality to have 
\EQQS{
 J_3^{1/2} &\lec \sum_{N_2 \gec 1} \sum_{N_3 \sim N_2} \Bigl{(}\sum_{N_1 \ll N_2} N_1^{1/2} T^{1/2} N_3^{1/2} 
                        \|n_{N_3}\|_{V^2_{W_{\pm c}}}^2  \|v_{N_2}\|_{V^2_{K_{\pm}}}^2\Bigr{)}^{1/2} \\
              &\lec T^{1/4} \sum_{N_2 \gec 1} \sum_{N_3 \sim N_2} N_2^{1/4}N_3^{1/4}
                        \|n_{N_3}\|_{V^2_{W_{\pm c}}} \|v_{N_2}\|_{V^2_{K_{\pm}}} \\ 
              &\lec T^{1/4} \|n\|_{\dot{Y}^{1/4}_{W_{\pm c}}} \|v\|_{Y^{1/4}_{K_{\pm}}}. 
}

We prove \eqref{BEW}.
By Corollary \ref{U2A}, we only need to estimate $K_i \ (i=1,2,3)$:  
\EQQS{
 &K_1 := \sum_{N_3} N_3^{2s} \sup_{\|n\|_{V^2_{W_{\pm c}}}=1} \Bigl{|}\sum_{N_2 \sim N_3} \sum_{N_1 \ll N_3} 
              \int_{\R^{1+d}} (\om_1^{-1}\til{u}_{N_1})(\ol{\om_1^{-1}\til{v}_{N_2}})(\ol{\om \til{n}_{N_3}})dxdt\Bigr{|}^2,  \\
 &K_2 := \sum_{N_3} N_3^{2s} \sup_{\|n\|_{V^2_{W_{\pm c}}}=1} \Bigl{|}\sum_{N_2 \ll N_3} \sum_{N_1 \sim N_3} 
              \int_{\R^{1+d}} (\om_1^{-1}\til{u}_{N_1})(\ol{\om_1^{-1}\til{v}_{N_2}})(\ol{\om \til{n}_{N_3}})dxdt\Bigr{|}^2,  \\
 &K_3 := \sum_{N_3} N_3^{2s} \sup_{\|n\|_{V^2_{W_{\pm c}}}=1} \Bigl{|}\sum_{N_2 \gec N_3} \sum_{N_1 \sim N_2} 
              \int_{\R^{1+d}} (\om_1^{-1}\til{u}_{N_1})(\ol{\om_1^{-1}\til{v}_{N_2}})(\ol{\om \til{n}_{N_3}})dxdt\Bigr{|}^2.
}
First, we estimate $K_1$. 
Put $K_1 = K_{1,1} + K_{1,2}$ where 
\EQS{
 &K_{1,1} := \sum_{N_3 \lec 1} N_3^{2s} \sup_{\|n\|_{V^2_{W_{\pm c}}}=1} \Bigl{|}\sum_{N_2 \sim N_3} \sum_{N_1 \ll N_3} 
                \int_{\R^{1+d}} (\om_1^{-1}\til{u}_{N_1})(\ol{\om_1^{-1}\til{v}_{N_2}}) \notag \\
 &\qquad \qquad \qquad \qquad \qquad \qquad \qquad \cross (\ol{\om \til{n}_{N_3}}) dxdt\Bigr{|}^2, 
                                                                                                              \label{K11} \\
 &K_{1,2} := \sum_{N_3 \gg 1} N_3^{2s} \sup_{\|n\|_{V^2_{W_{\pm c}}}=1} \Bigl{|}\sum_{N_2 \sim N_3} \sum_{N_1 \ll N_3} 
                \int_{\R^{1+d}} (\om_1^{-1}\til{u}_{N_1})(\ol{\om_1^{-1}\til{v}_{N_2}}) (\ol{\om \til{n}_{N_3}}) dxdt\Bigr{|}^2. 
                                                                                                                                           \notag   
}
For $d=4, s=1/4$, by the same manner as the estimate for Lemma \ref{tri} $(i)$ and $N_1 \ll N_3 \lec 1$, we find 
\EQQS{
  &\Bigl{|}\int_{\R^{1+4}} \Bigl{(}\sum_{N_1 \ll N_3}\om_1^{-1}\til{u}_{N_1}\Bigr{)}
         (\ol{\om_1^{-1}\til{v}_{N_2}})(\ol{\om \til{n}_{N_3}})dxdt\Bigr{|} \\
  &\lec \|\1_{[0,T)}\|_{L^4_t} \Bigl{\|}\sum_{N_1 \ll N_3}\om_1^{-1}\til{u}_{N_1}\Bigr{\|}_{L^{20/3}_t L^{5/2}_x} 
             \| \om_1^{-1}\til{v}_{N_2}\|_{L^{20/3}_t L^{5/2}_x} \| \om \til{n}_{N_3}\|_{L^{20/9}_t L^5_x} \\
  &\lec T^{1/4} \Bigl{\|} \LR{\na_x}^{1/4}\sum_{N_1 \ll N_3}\om_1^{-1}\til{u}_{N_1}\Bigr{\|}_{V^2_{K_{\pm}}} 
             \| \LR{\na_x}^{1/4} \om_1^{-1}\til{v}_{N_2}\|_{V^2_{K_{\pm}}} \| |\na_x|^{3/4}\om \til{n}_{N_3}\|_{V^2_{W_{\pm c}}} \\
  &\lec T^{1/4} \LR{N_1}^{-1} \Bigl{\|} \LR{\na_x}^{1/4}\sum_{N_1 \ll N_3} \til{u}_{N_1}\Bigr{\|}_{V^2_{K_{\pm}}} 
             \LR{N_2}^{1/4-1} \|v_{N_2}\|_{V^2_{K_{\pm}}} N_3^{3/4+1} \|n_{N_3}\|_{V^2_{W_{\pm c}}} \\
  &\lec T^{1/4} \|u\|_{Y^{1/4}_{K_{\pm}}} \|v_{N_2}\|_{V^2_{K_{\pm}}} \|n_{N_3}\|_{V^2_{W_{\pm c}}}.       
}
Hence, 
\EQQS{
 K_{1,1} \lec \sum_{N_2 \lec 1} N_2^{1/2} (T^{1/4}\|u\|_{Y^{1/4}_{K_{\pm}}} \|v_{N_2}\|_{V^2_{K_{\pm}}})^2 
          \lec T^{1/2} \|u\|_{Y^{1/4}_{K_{\pm}}}^2 \|v\|_{Y^{1/4}_{K_{\pm}}}^2.  
} 
We take $M=\e N_2$ for sufficiently small $\e >0$. 
Then, from Lemma \ref{Recovery}, we have
\EQQS{
 &Q_{<M} \om_1^{-1}\1_{[0,T)}\bigl((Q_{<M} \om_1^{-1}\til{v}_{N_2})(Q_{<M}{\om \, \til{n}_{N_3}})\bigr) \\
 &= Q_{<M} \om_1^{-1}\1_{[0,T)}\Bigl[ \F^{-1}\Bigl( \int_{\ta_1 = \ta_2 + \ta_3,\, \xi_1 = \xi_2 + \xi_3} 
       \widehat{(Q_{<M} \om_1^{-1}\til{v}_{N_2})}(\ta_2, \xi_2)
       \widehat{(Q_{<M} \om \, \til{n}_{N_3})}(\ta_3, \xi_3)  \Bigr) \Bigr] \\
 &=0.
}
Therefore, 
\EQQS{
 (\om_1^{-1}\til{v}_{N_2}) (\om \, \til{n}_{N_3}) = \sum_{i=1}^3 G_i, 
}
where 
\EQQS{
 &G_1 :=  Q_{\ge M} \bigl((Q_2 \om_1^{-1}\til{v}_{N_2})(Q_3 \om \, \til{n}_{N_3})\bigr), \qquad 
   G_2 :=  Q_1 \bigl( (Q_{\ge M} \om_1^{-1}\til{v}_{N_2})(Q_3 \om \, \til{n}_{N_3}) \bigr), \notag \\
 &G_3 :=  Q_1 \bigl( (Q_2 \om_1^{-1}\til{v}_{N_2})(Q_{\ge M} \om \, \til{n}_{N_3})\bigr).
}
Hence, it follows that 
\EQQS{
 K_{1,2} \le K_{1,2,1} + K_{1,2,2} + K_{1,2,3}
}
where  
\EQS{
 K_{1,2,1} &:= \sum_{N_3 \gg 1} N_3^{2s} \sup_{\|n\|_{V^2_{W_{\pm c}}}=1} \Bigl{|}\sum_{N_2 \sim N_3}       
                        \sum_{N_1 \ll N_3} \int_{\R^{1+d}} (\om_1^{-1}\til{u}_{N_1}) \ol{G_1} dxdt\Bigr{|}^2,        \label{K121a} \\ 
 K_{1,2,2} &:= \sum_{N_3 \gg 1} N_3^{2s} \sup_{\|n\|_{V^2_{W_{\pm c}}}=1} \Bigl{|}\sum_{N_2 \sim N_3}
                        \sum_{N_1 \ll N_3} \int_{\R^{1+d}} (\om_1^{-1}\til{u}_{N_1}) \ol{G_2} dxdt\Bigr{|}^2,        \label{K122a} \\
 K_{1,2,3} &:= \sum_{N_3 \gg 1} N_3^{2s} \sup_{\|n\|_{V^2_{W_{\pm c}}}=1} \Bigl{|}\sum_{N_2 \sim N_3}
                        \sum_{N_1 \ll N_3} \int_{\R^{1+d}} (\om_1^{-1}\til{u}_{N_1}) \ol{G_3} dxdt\Bigr{|}^2.        \label{K123a}  
}  
By Lemma \ref{Mhigh}, 
\EQS{
 K_{1,2,1} &\lec  \sum_{N_3 \gg 1} N_3^{2s} \sup_{\|n\|_{V^2_{W_{\pm c}}}=1} \Bigl{|}\sum_{N_2 \sim N_3}
                        \sum_{N_1 \ll N_3} \int_{\R^{1+d}} (Q_{\ge M}\om_1^{-1}\til{u}_{N_1})(\ol{Q_2 \om_1^{-1}\til{v}_{N_2}}) 
                           \notag \\
            &\qquad \qquad       \cross (\ol{Q_3 \om \til{n}_{N_3}}) dxdt\Bigr{|}^2.                                          \label{k121}  
}
By the same manner as the estimate for Lemma \ref{tri} $(iv), i=5$, for $d=4, s=1/4$, we find  
\EQS{
 &\Bigl{|} \int_{\R^{1+4}} \Bigl{(}\sum_{N_1 \ll N_3}Q_{\ge M} \om_1^{-1} \til{u}_{N_1}\Bigr{)}(\ol{Q_2  \om_1^{-1} \til{v}_{N_2}})
     (\ol{Q_3 \om \til{n}_{N_3}})dxdt\Bigr{|} \notag \\ 
 &\lec T^{1/4}\|u\|_{Y^{1/4}_{K_{\pm}}}\|v_{N_2}\|_{V^2_{K_{\pm}}}\|n_{N_3}\|_{V^2_{W_{\pm c}}}.                   \label{k121a}
}
$N_3 \gg 1, N_2 \sim N_3$ implies $N_2 \ge 1$. 
Hence, from \eqref{k121} and \eqref{k121a}, we have 
\EQQS{
 K_{1,2,1} 
  &\lec  \sum_{N_3 \gg 1} N_3^{1/2} \sup_{\|n\|_{V^2_{K_{\pm}}}=1} \Bigl{|}\sum_{N_2 \sim N_3} 
                \sum_{N_1 \ll N_3} \int_{\R^{1+4}} (Q_{\ge M} \om_1^{-1} \til{u}_{N_1})(\ol{Q_2  \om_1^{-1} \til{v}_{N_2}}) \\
  &\qquad \qquad    \cross (\ol{Q_3 \om \til{n}_{N_3}})dxdt\Bigr{|}^2 \\
  &\lec \sum_{N_2 \ge 1} N_2^{1/2}(T^{1/4}\|u\|_{Y^{1/4}_{K_{\pm}}} \|v_{N_2}\|_{V^2_{K_{\pm}}})^2  
    \lec T^{1/2} \|u\|_{Y^{1/4}_{K_{\pm}}}^2 \|v\|_{Y^{1/4}_{K_{\pm}}}^2.
}
By Lemma \ref{Mhigh}, 
\EQS{
 K_{1,2,2} &\lec  \sum_{N_3 \gg 1} N_3^{2s} \sup_{\|n\|_{V^2_{W_{\pm c}}}=1} \Bigl{|}\sum_{N_2 \sim N_3}
                        \sum_{N_1 \ll N_3} \int_{\R^{1+d}} (Q_1 \om_1^{-1}\til{u}_{N_1})(\ol{Q_{\ge M} \om_1^{-1}\til{v}_{N_2}}) 
                           \notag \\
            &\qquad \qquad       \cross (\ol{Q_3 \om \til{n}_{N_3}}) dxdt\Bigr{|}^2.                                          \label{k122}  
}
By the same manner as the estimate for Lemma \ref{tri} $(iv), i=6$, for $d=4, s=1/4$, we find 
\EQS{
 &\Bigl{|} \int_{\R^{1+4}} \Bigl{(}\sum_{N_1 \ll N_3}Q_1\om_1^{-1} \til{u}_{N_1}\Bigr{)}(\ol{Q_{\ge M}  \om_1^{-1} \til{v}_{N_2}})
     (\ol{Q_3 \om \til{n}_{N_3}})dxdt\Bigr{|} \notag \\ 
 &\lec T^{1/4}\|u\|_{Y^{1/4}_{K_{\pm}}}\|v_{N_2}\|_{V^2_{K_{\pm}}}\|n_{N_3}\|_{V^2_{W_{\pm c}}}.                   \label{k122a}
}
$N_3 \gg 1, N_2 \sim N_3$ implies $N_2 \ge 1$. 
Hence, from \eqref{k122} and \eqref{k122a}, we have 
\EQQS{
 K_{1,2,2} 
  &\lec  \sum_{N_3 \gg 1} N_3^{1/2} \sup_{\|n\|_{V^2_{K_{\pm}}}=1} \Bigl{|}\sum_{N_2 \sim N_3} 
                \sum_{N_1 \ll N_3} \int_{\R^{1+4}} (Q_1 \om_1^{-1} \til{u}_{N_1})(\ol{Q_{\ge M} \om_1^{-1} \til{v}_{N_2}}) \\
  &\qquad \qquad    \cross (\ol{Q_3 \om \til{n}_{N_3}})dxdt\Bigr{|}^2 \\
  &\lec \sum_{N_2 \ge 1} N_2^{1/2}(T^{1/4}\|u\|_{Y^{1/4}_{K_{\pm}}} \|v_{N_2}\|_{V^2_{K_{\pm}}})^2  
    \lec T^{1/2} \|u\|_{Y^{1/4}_{K_{\pm}}}^2 \|v\|_{Y^{1/4}_{K_{\pm}}}^2.
}
By Lemma \ref{Mhigh}, 
\EQS{
 K_{1,2,3} &\lec  \sum_{N_3 \gg 1} N_3^{2s} \sup_{\|n\|_{V^2_{W_{\pm c}}}=1} \Bigl{|}\sum_{N_2 \sim N_3}
                        \sum_{N_1 \ll N_3} \int_{\R^{1+d}} (Q_1 \om_1^{-1}\til{u}_{N_1})(\ol{Q_2 \om_1^{-1}\til{v}_{N_2}}) 
                           \notag \\
            &\qquad \qquad       \cross (\ol{Q_{\ge M} \om \til{n}_{N_3}}) dxdt\Bigr{|}^2.                                   \label{k123}  
}
By the same manner as the estimate for Lemma \ref{tri} $(iv), i=4$, for $d=4, s=1/4$, we find  
\EQS{
 &\Bigl{|} \int_{\R^{1+4}} \Bigl{(}\sum_{N_1 \ll N_3}Q_1\om_1^{-1} \til{u}_{N_1}\Bigr{)}(\ol{Q_2 \om_1^{-1} \til{v}_{N_2}})
     (\ol{Q_{\ge M} \om \til{n}_{N_3}})dxdt\Bigr{|} \notag \\ 
 &\lec T^{1/4}\|u\|_{Y^{1/4}_{K_{\pm}}}\|v_{N_2}\|_{V^2_{K_{\pm}}}\|n_{N_3}\|_{V^2_{W_{\pm c}}}.                   \label{k123a}
}
$N_3 \gg 1, N_2 \sim N_3$ implies $N_2 \ge 1$. 
Hence, from \eqref{k123} and \eqref{k123a}, we have 
\EQQS{
 K_{1,2,3} 
  &\lec  \sum_{N_3 \gg 1} N_3^{1/2} \sup_{\|n\|_{V^2_{K_{\pm}}}=1} \Bigl{|}\sum_{N_2 \sim N_3} 
                \sum_{N_1 \ll N_3} \int_{\R^{1+4}} (Q_1 \om_1^{-1} \til{u}_{N_1})(\ol{Q_2 \om_1^{-1} \til{v}_{N_2}}) \\
  &\qquad \qquad    \cross (\ol{Q_{\ge M} \om \til{n}_{N_3}})dxdt\Bigr{|}^2 \\
  &\lec \sum_{N_2 \ge 1} N_2^{1/2}(T^{1/4}\|u\|_{Y^{1/4}_{K_{\pm}}} \|v_{N_2}\|_{V^2_{K_{\pm}}})^2  
    \lec T^{1/2} \|u\|_{Y^{1/4}_{K_{\pm}}}^2 \|v\|_{Y^{1/4}_{K_{\pm}}}^2.
}
By symmetry, the estimate for $K_2$ is obtained by the same manner as the estimate for $K_1$. 
Hence, we estimate $K_3$. 
By the triangle inequality, Lemma \ref{tri} $(i)$ and the Cauchy-Schwarz inequality, we have  
\EQS{
 K_3^{1/2} \lec \sum_{N_2} \sum_{N_1 \sim N_2}\Bigl{\{} \sum_{N_3 \lec N_2} N_3^{2s} \sup_{\|n\|_{V^2_{K_{\pm}}}=1}
                 \Bigl{|}\int_{\R^{1+d}} (\om_1^{-1}\til{u}_{N_1})(\ol{\om_1^{-1}\til{v}_{N_2}})
                    (\ol{\om \til{n}_{N_3}})dxdt\Bigr{|}^2 \Bigr{\}}^{1/2}. 
                                                                                                                                      \label{k3a}
}
If $d=4, s=1/4$, then we apply Lemma \ref{tri} $(i)$ and the Cauchy-Schwarz inequality, the right-hand side of 
\eqref{k3a} is bounded by  
\EQQS{
 &\sum_{N_2} \sum_{N_1 \sim N_2} \Bigl{\{} \sum_{N_3 \lec N_2} N_3^{1/2} (T^{1/4}N_3^{1/4}\|u_{N_1}\|_{V^2_{K_{\pm}}} 
     \|v_{N_2}\|_{V^2_{K_{\pm}}})^2 \Bigr{\}}^{1/2} \\
 &\lec T^{1/4} \sum_{N_2} \sum_{N_1 \sim N_2} (N_2 \|u_{N_1}\|_{V^2_{K_{\pm}}}^2 \|v_{N_2}\|_{V^2_{K_{\pm}}}^2 )^{1/2} \\
 &\lec T^{1/4} \Bigl{(}\sum_{N} N^{1/2} \|u_N\|_{V^2_{K_{\pm}}}^2\Bigr{)}^{1/2} 
             \Bigl{(}\sum_{N} N^{1/2} \|v_N\|_{V^2_{K_{\pm}}}^2\Bigr{)}^{1/2}.
}
Since 
\EQQS{
 \sum_{N < 1}N^{1/2}\|u_N\|_{V^2_{K_{\pm}}}^2 \lec \sum_{N < 1} N^{1/2} \|P_{<1}u\|_{V^2_{K_{\pm}}}^2 
                                                           \lec \|P_{<1}u\|_{V^2_{K_{\pm}}}^2, 
}
we obtain $K_3^{1/2} \lec T^{1/4} \|u\|_{Y^{1/4}_{K_{\pm}}} \|v\|_{Y^{1/4}_{K_{\pm}}}$.

Next, we prove \eqref{BEKG} for $d \ge 5$ and $s=s'=(d^2-3d-2)/2(d+1)$ by the same manner as the proof for $d=4, s=1/4$. 
From \eqref{j0} and Lemma \ref{tri} $(ii)$, we have 
\EQQS{
 J_0^{1/2} \lec T^{1/(d+1)}\|n\|_{\dot{Y}^{s'}_{W_{\pm c}}}\|v\|_{Y^{s'}_{K_{\pm}}}.  
}
By \eqref{j1}, $N_1 \sim N_2$, Lemma \ref{tri} $(iii)$ and 
$\|u_{N_1}\|_{V^2_{K_{\pm}}} \lec \|u\|_{V^2_{K_{\pm}}}$, we have 
\EQQS{
 J_1 \lec \sum_{N_2 \gec 1} N_2^{2s'} T^{2/(d+1)}\|n\|_{\dot{Y}^{s'}_{W_{\pm c}}}^2 \|v_{N_2}\|_{V^2_{K_{\pm}}}^2 
      \lec T^{2/(d+1)}\|n\|_{\dot{Y}^{s'}_{W_{\pm c}}}^2 \|v\|_{Y^{s'}_{K_{\pm}}}^2.
}
From Lemma \ref{tri} $(iv), N_3 \sim N_1 \ge 1$ and 
$\|u_{N_1}\|_{V^2_{K_{\pm}}} \lec \|u\|_{V^2_{K_{\pm}}}$, the right-hand side of \eqref{j2a} is bounded by  
\EQS{
 T^{2/(d+1)} \sum_{N_3 \gec 1} N_3^{2s'}  \|n_{N_3}\|_{V^2_{W_{\pm c}}}^2 \|v\|_{Y^{s'}_{K_{\pm}}}^2 
   \lec T^{2/(d+1)} \|n\|_{\dot{Y}^{s'}_{W_{\pm c}}}^2 \|v\|_{Y^{s'}_{K_{\pm}}}^2.                                         \label{al1}
}
From Lemma \ref{tri} $(iv), N_3 \sim N_1 \ge 1$ and 
$\|u_{N_1}\|_{V^2_{K_{\pm}}} \lec \|u\|_{V^2_{K_{\pm}}}$, the right-hand side of \eqref{j2b} is bounded by  
\EQS{
 T^{2/(d+1)} \sum_{N_3 \gec 1} N_3^{2s'}  \|n_{N_3}\|_{V^2_{W_{\pm c}}}^2 \|v\|_{Y^{s'}_{K_{\pm}}}^2               \label{al2}
   \lec T^{2/(d+1)} \|n\|_{\dot{Y}^{s'}_{W_{\pm c}}}^2 \|v\|_{Y^{s'}_{K_{\pm}}}^2.
}
From Lemma \ref{tri} $(iv), N_3 \sim N_1 \ge 1$ and 
$\|u_{N_1}\|_{V^2_{K_{\pm}}} \lec \|u\|_{V^2_{K_{\pm}}}$, the right-hand side of \eqref{j2c} is bounded by  
\EQS{
 T^{2/(d+1)} \sum_{N_3 \gec 1} N_3^{2s'}  \|n_{N_3}\|_{V^2_{W_{\pm c}}}^2 \|v\|_{Y^{s'}_{K_{\pm}}}^2 
   \lec T^{2/(d+1)} \|n\|_{\dot{Y}^{s'}_{W_{\pm c}}}^2 \|v\|_{Y^{s'}_{K_{\pm}}}^2.                                         \label{al3}
}
Collecting \eqref{al1}--\eqref{al3}, we obtain 
$J_2 \lec T^{2/(d+1)} \|n\|_{\dot{Y}^{s'}_{W_{\pm c}}}^2 \|v\|_{Y^{s'}_{K_{\pm}}}^2$.  
By the same manner as the estimate for Lemma \ref{tri} $(iii)$, we obtain 
\EQS{
 \Bigl{|}\int_{\R^{1+d}} \til{n}_{N_3} (\om_1^{-1}\til{v}_{N_2})\ol{\til{u}_{N_1}} dxdt\Bigr{|} 
    \lec T^{1/(d+1)} N_3^{s'} \|n_{N_3}\|_{V^2_{W_{\pm c}}} \|v_{N_2}\|_{V^2_{K_{\pm}}} \|u_{N_1}\|_{V^2_{K_{\pm}}}.   \label{j3c}
}
From \eqref{j3c}, the right-hand side of \eqref{j3a} is bounded by  
\EQQS{
 \sum_{N_1 \ge 1} \Bigl{(}\sum_{N_2 \gg N_1} \sum_{N_3 \sim N_2} N_1^{s'} T^{1/(d+1)} N_3^{s'} \|n_{N_3}\|_{V^2_{W_{\pm c}}} 
       \|v_{N_2}\|_{V^2_{K_{\pm}}}\Bigr{)}^2. 
}
Hence, $\| \cdot \|_{l^2 l^1} \lec \| \cdot \|_{l^1 l^2}$ and the Cauchy-Schwarz inequality to have 
\EQQS{
 J_3^{1/2} &\lec \sum_{N_2 \gec 1} \sum_{N_3 \sim N_2} \Bigl{(}\sum_{N_1 \ll N_2} N_1^{2s'} T^{2/(d+1)} N_3^{2s'} 
                        \|n_{N_3}\|_{V^2_{W_{\pm c}}}^2  \|v_{N_2}\|_{V^2_{K_{\pm}}}^2\Bigr{)}^{1/2} \\
              &\lec T^{1/(d+1)} \sum_{N_2 \gec 1} \sum_{N_3 \sim N_2} N_2^{2s'}N_3^{2s'}
                        \|n_{N_3}\|_{V^2_{W_{\pm c}}} \|v_{N_2}\|_{V^2_{K_{\pm}}} \\ 
              &\lec T^{1/(d+1)} \|n\|_{\dot{Y}^{s'}_{W_{\pm c}}} \|v\|_{Y^{s'}_{K_{\pm}}}. 
}
We prove \eqref{BEW} for $d \ge 5, s=s'=(d^2-3d-2)/2(d+1)$ by the same manner as the proof for $d=4, s=1/4$.  
By the H\"{o}lder inequality to have 
\EQS{
 &\Bigl{|}\int_{\R^{1+d}} \Bigl{(}\sum_{N_1 \ll N_3}\om_1^{-1}\til{u}_{N_1}\Bigr{)} 
            (\ol{\om_1^{-1}\til{v}_{N_2}})(\ol{\om \til{n}_{N_3}}) dxdt\Bigr{|} \notag \\
 &\lec \Bigl{\|}\sum_{N_1 \ll N_3}\om_1^{-1}\til{u}_{N_1}\Bigr{\|}_{L^{2(d+1)/(d-1)}_{t,x}} 
            \| \om_1^{-1}\til{v}_{N_2}\|_{L^{2(d+1)/(d-1)}_{t,x}} \| \om \til{n}_{N_3}\|_{L^{(d+1)/2}_{t,x}}.                        \label{k1a}
}
By Proposition \ref{Str}, $N_1 \ll N_3 \lec 1$ and discarding $\om_1^{-1}$ to have   
\EQS{
 \Bigl{\|}\sum_{N_1 \ll N_3}\om_1^{-1}\til{u}_{N_1}\Bigr{\|}_{L^{2(d+1)/(d-1)}_{t,x}} 
   &\lec \LR{N_1}^{1/2} \| \sum_{N_1 \ll N_3}\om_1^{-1}\til{u}_{N_1}\|_{V^2_{K_{\pm}}}  \notag \\
   &\lec \| \sum_{N_1 \ll N_3} \til{u}_{N_1}\|_{V^2_{K_{\pm}}}   \notag \\
   &\lec \|P_{<1}u\|_{V^2_{K_{\pm}}}.                                                                                                      \label{k1b}
}
From \eqref{K11}, \eqref{k1a}, \eqref{k1b}, \eqref{5dn2} and \eqref{5dn6}, we obtain  
\EQQS{
 K_{1,1} 
  &\lec \sum_{N_2 \lec 1} N_2^{2s'} (T^{1/(d+1)}\|P_{<1}u\|_{V^2_{K_{\pm}}} \|v_{N_2}\|_{V^2_{K_{\pm}}})^2 \\
  &\lec T^{2/(d+2)} \|P_{<1}u\|_{V^2_{K_{\pm}}}^2 \sum_{N_2 \lec 1} N_2^{2s'} \|v_{N_2}\|_{V^2_{K_{\pm}}}^2 \\
  &\lec T^{2/(d+2)} \|u\|_{Y^{s'}_{K_{\pm}}}^2 \|v\|_{Y^{s'}_{K_{\pm}}}^2. 
}   
By the same manner as the estimate for Lemma \ref{tri} $(iv), i=5$, we see 
\EQS{
 &\Bigl{|} \int_{\R^{1+d}} \Bigl{(}\sum_{N_1 \ll N_3}Q_{\ge M} \om_1^{-1} \til{u}_{N_1}\Bigr{)}(\ol{Q_2  \om_1^{-1} \til{v}_{N_2}})
     (\ol{Q_3 \om \til{n}_{N_3}})dxdt\Bigr{|} \notag \\ 
 &\lec T^{1/(d+1)}\|u\|_{Y^{s'}_{K_{\pm}}}\|v_{N_2}\|_{V^2_{K_{\pm}}}\|n_{N_3}\|_{V^2_{W_{\pm c}}}.                 \label{k121b}
}
From \eqref{K121a}, \eqref{k121} and \eqref{k121b}, we have 
\EQQS{
 K_{1,2,1} 
  &\lec  \sum_{N_3 \gg 1} N_3^{2s'} \sup_{\|n\|_{V^2_{K_{\pm}}}=1} \Bigl{|}\sum_{N_2 \sim N_3} 
                \sum_{N_1 \ll N_3} \int_{\R^{1+d}} (Q_{\ge M} \om_1^{-1} \til{u}_{N_1})(\ol{Q_2  \om_1^{-1} \til{v}_{N_2}}) \\
  &\qquad \qquad    \cross (\ol{Q_3 \om \til{n}_{N_3}})dxdt\Bigr{|}^2 \\
  &\lec \sum_{N_2 \ge 1} N_2^{2s'}(T^{1/(d+1)}\|u\|_{Y^{s'}_{K_{\pm}}} \|v_{N_2}\|_{V^2_{K_{\pm}}})^2  
    \lec T^{2/(d+1)} \|u\|_{Y^{s'}_{K_{\pm}}}^2 \|v\|_{Y^{s'}_{K_{\pm}}}^2.
}
By the same manner as the estimate for Lemma \ref{tri} $(iv), i=6$, we see 
\EQS{
 &\Bigl{|} \int_{\R^{1+d}} \Bigl{(}\sum_{N_1 \ll N_3}Q_1 \om_1^{-1} \til{u}_{N_1}\Bigr{)}(\ol{Q_{\ge M} \om_1^{-1} \til{v}_{N_2}})
     (\ol{Q_3 \om \til{n}_{N_3}})dxdt\Bigr{|} \notag \\ 
 &\lec T^{1/(d+1)}\|u\|_{Y^{s'}_{K_{\pm}}}\|v_{N_2}\|_{V^2_{K_{\pm}}}\|n_{N_3}\|_{V^2_{W_{\pm c}}}.                 \label{k122b}
}
From \eqref{K122a}, \eqref{k122} and \eqref{k122b}, we have 
\EQQS{
 K_{1,2,2} 
  &\lec  \sum_{N_3 \gg 1} N_3^{2s'} \sup_{\|n\|_{V^2_{K_{\pm}}}=1} \Bigl{|}\sum_{N_2 \sim N_3} 
                \sum_{N_1 \ll N_3} \int_{\R^{1+d}} (Q_1 \om_1^{-1} \til{u}_{N_1})(\ol{Q_{\ge M} \om_1^{-1} \til{v}_{N_2}}) \\
  &\qquad \qquad    \cross (\ol{Q_3 \om \til{n}_{N_3}})dxdt\Bigr{|}^2 \\
  &\lec \sum_{N_2 \ge 1} N_2^{2s'}(T^{1/(d+1)}\|u\|_{Y^{s'}_{K_{\pm}}} \|v_{N_2}\|_{V^2_{K_{\pm}}})^2  
    \lec T^{2/(d+1)} \|u\|_{Y^{s'}_{K_{\pm}}}^2 \|v\|_{Y^{s'}_{K_{\pm}}}^2.
}
By the same manner as the estimate for Lemma \ref{tri} $(iv), i=4$, we see 
\EQS{
 &\Bigl{|} \int_{\R^{1+d}} \Bigl{(}\sum_{N_1 \ll N_3}Q_1 \om_1^{-1} \til{u}_{N_1}\Bigr{)}(\ol{Q_2 \om_1^{-1} \til{v}_{N_2}})
     (\ol{Q_{\ge M} \om \til{n}_{N_3}})dxdt\Bigr{|} \notag \\ 
 &\lec T^{1/(d+1)}\|u\|_{Y^{s'}_{K_{\pm}}}\|v_{N_2}\|_{V^2_{K_{\pm}}}\|n_{N_3}\|_{V^2_{W_{\pm c}}}.                 \label{k123b}
}
From \eqref{K123a}, \eqref{k123} and \eqref{k123b}, we have 
\EQQS{
 K_{1,2,3} 
  &\lec \sum_{N_3 \gg 1} N_3^{2s'} \sup_{\|n\|_{V^2_{K_{\pm}}}=1} \Bigl{|}\sum_{N_2 \sim N_3} 
                \sum_{N_1 \ll N_3} \int_{\R^{1+d}} (Q_1 \om_1^{-1} \til{u}_{N_1})(\ol{Q_2 \om_1^{-1} \til{v}_{N_2}}) \\
  &\qquad \qquad    \cross (\ol{Q_{\ge M} \om \til{n}_{N_3}})dxdt\Bigr{|}^2 \\
  &\lec \sum_{N_2 \ge 1} N_2^{2s'}(T^{1/(d+1)}\|u\|_{Y^{s'}_{K_{\pm}}} \|v_{N_2}\|_{V^2_{K_{\pm}}})^2  
    \lec T^{2/(d+1)} \|u\|_{Y^{s'}_{K_{\pm}}}^2 \|v\|_{Y^{s'}_{K_{\pm}}}^2.
}
By symmetry, the estimate for $K_2$ is obtained by the same manner as the estimate for $K_1$. 
We apply Lemma \ref{tri} $(i)$ and the Cauchy-Schwarz inequality, 
the right-hand side of \eqref{k3a} is bounded by   
\EQQS{
 &\sum_{N_2} \sum_{N_1 \sim N_2} \Bigl{\{} \sum_{N_3 \lec N_2} N_3^{2s'} (T^{1/(d+1)}N_3^{s'}\|u_{N_1}\|_{V^2_{K_{\pm}}} 
     \|v_{N_2}\|_{V^2_{K_{\pm}}})^2 \Bigr{\}}^{1/2} \\
 &\lec T^{1/(d+1)} \sum_{N_2} \sum_{N_1 \sim N_2} (N_2^{4s'} \|u_{N_1}\|_{V^2_{K_{\pm}}}^2 
               \|v_{N_2}\|_{V^2_{K_{\pm}}}^2 )^{1/2} \\
 &\lec T^{1/(d+1)} \Bigl{(}\sum_{N} N^{2s'} \|u_N\|_{V^2_{K_{\pm}}}^2\Bigr{)}^{1/2} 
               \Bigl{(}\sum_{N} N^{2s'} \|v_N\|_{V^2_{K_{\pm}}}^2\Bigr{)}^{1/2}.
}
Since $s' > 0$, we have 
\EQQS{
 \sum_{N < 1}N^{2s'}\|u_N\|_{V^2_{K_{\pm}}}^2 \lec \sum_{N < 1} N^{2s'} \|P_{<1}u\|_{V^2_{K_{\pm}}}^2 
                                                          \lec \|P_{<1}u\|_{V^2_{K_{\pm}}}^2. 
}
Thus, we obtain $K_3^{1/2} \lec T^{1/(d+1)} \|u\|_{Y^{s'}_{K_{\pm}}} \|v\|_{Y^{s'}_{K_{\pm}}}$.

Finally, we prove \eqref{BEKG} for $d \ge 4, s=s_c=d/2-2$ and spherically symmetric functions $(u,v,n)$ 
by the same manner as the proof of $d=4, s=1/4$. 
From \eqref{j0} and Lemma \ref{tri} $(ii)$, we obtain 
\EQQS{
 J_0^{1/2} \lec \|n\|_{\dot{Y}^{s_c}_{W_{\pm c}}}\|v\|_{Y^{s_c}_{K_{\pm}}}.
}
By \eqref{j1}, $N_1 \sim N_2$, Lemma \ref{tri} $(iii)$ and $\|u_{N_1}\|_{V^2_{K_{\pm}}} \lec \|u\|_{V^2_{K_{\pm}}}$, we have    
\EQQS{
 J_1 \lec \sum_{N_2 \gec 1} N_2^{2s_c} \|n\|_{\dot{Y}^{s_c}_{W_{\pm c}}}^2 \|v_{N_2}\|_{V^2_{K_{\pm}}}^2 
      \lec \|n\|_{\dot{Y}^{s_c}_{W_{\pm c}}}^2 \|v\|_{Y^{s_c}_{K_{\pm}}}^2. 
} 
From Lemma \ref{tri} $(iv), N_3 \sim N_1 \ge 1$ and $\|u_{N_1}\|_{V^2_{K_{\pm}}} \lec \|u\|_{V^2_{K_{\pm}}}$, 
the right-hand side of \eqref{j2a} is bounded by  
\EQS{
 \sum_{N_3 \gec 1} N_3^{2s_c}  \|n_{N_3}\|_{V^2_{W_{\pm c}}}^2 \|v\|_{Y^{s_c}_{K_{\pm}}}^2 
   \lec \|n\|_{\dot{Y}^{s_c}_{W_{\pm c}}}^2 \|v\|_{Y^{s_c}_{K_{\pm}}}^2.                                                      \label{Jj21}
} 
From Lemma \ref{tri} $(iv), N_3 \sim N_1 \ge 1$ and $\|u_{N_1}\|_{V^2_{K_{\pm}}} \lec \|u\|_{V^2_{K_{\pm}}}$, 
the right-hand side of \eqref{j2b} is bounded by  
\EQS{
 \sum_{N_3 \gec 1} N_3^{2s_c}  \|n_{N_3}\|_{V^2_{W_{\pm c}}}^2 \|v\|_{Y^{s_c}_{K_{\pm}}}^2 
   \lec \|n\|_{\dot{Y}^{s_c}_{W_{\pm c}}}^2 \|v\|_{Y^{s_c}_{K_{\pm}}}^2.                                                      \label{Jj22}
}  
From Lemma \ref{tri} $(iv), N_3 \sim N_1 \ge 1$ and $\|u_{N_1}\|_{V^2_{K_{\pm}}} \lec \|u\|_{V^2_{K_{\pm}}}$, 
the right-hand side of \eqref{j2c} is bounded by  
\EQS{
 \sum_{N_3 \gec 1} N_3^{2s_c}  \|n_{N_3}\|_{V^2_{W_{\pm c}}}^2 \|v\|_{Y^{s_c}_{K_{\pm}}}^2 
   \lec \|n\|_{\dot{Y}^{s_c}_{W_{\pm c}}}^2 \|v\|_{Y^{s_c}_{K_{\pm}}}^2.                                                      \label{Jj23}
} 
Collecting \eqref{Jj21}--\eqref{Jj23}, we have 
$J_2 \lec \|n\|_{\dot{Y}^{s_c}_{W_{\pm c}}}^2 \|v\|_{Y^{s_c}_{K_{\pm}}}^2$.  
By the same manner as the estimate for Lemma \ref{tri} $(iii)$, we obtain 
\EQS{
 \Bigl{|}\int_{\R^{1+d}} \til{n}_{N_3} (\om_1^{-1}\til{v}_{N_2})\ol{\til{u}_{N_1}} dxdt\Bigr{|} 
    \lec N_3^{s_c} \|n_{N_3}\|_{V^2_{W_{\pm c}}} \|v_{N_2}\|_{V^2_{K_{\pm}}} \|u_{N_1}\|_{V^2_{K_{\pm}}}.   \label{j3d}
}
From \eqref{j3d}, the right-hand side of \eqref{j3a} is bounded by  
\EQQS{
 \sum_{N_1 \ge 1} \Bigl{(}\sum_{N_2 \gg N_1} \sum_{N_3 \sim N_2} N_1^{s_c} N_3^{s_c} \|n_{N_3}\|_{V^2_{W_{\pm c}}} 
       \|v_{N_2}\|_{V^2_{K_{\pm}}}\Bigr{)}^2. 
}
Hence, $\| \cdot \|_{l^2 l^1} \lec \| \cdot \|_{l^1 l^2}$ and the Cauchy-Schwarz inequality to have 
\EQQS{
 J_3^{1/2} &\lec \sum_{N_2 \gec 1} \sum_{N_3 \sim N_2} \Bigl{(}\sum_{N_1 \ll N_2} N_1^{2s_c} N_3^{2s_c} 
                        \|n_{N_3}\|_{V^2_{W_{\pm c}}}^2  \|v_{N_2}\|_{V^2_{K_{\pm}}}^2\Bigr{)}^{1/2} \\
              &\lec \sum_{N_2 \gec 1} \sum_{N_3 \sim N_2} N_2^{2s_c}N_3^{2s_c}
                        \|n_{N_3}\|_{V^2_{W_{\pm c}}} \|v_{N_2}\|_{V^2_{K_{\pm}}} \\ 
              &\lec \|n\|_{\dot{Y}^{s_c}_{W_{\pm c}}} \|v\|_{Y^{s_c}_{K_{\pm}}}. 
}
We prove \eqref{BEW} for $d \ge 4, s=s_c=d/2-2$ and spherically symmetric functions $(u,v,n)$ 
by the same manner as the proof of $d=4, s=1/4$.  
By the H\"{o}lder inequality to have  
\EQS{
 &\Bigl{|}\int_{\R^{1+d}} \Bigl{(}\sum_{N_1 \ll N_3}\om_1^{-1}\til{u}_{N_1}\Bigr{)} (\ol{\om_1^{-1}\til{v}_{N_2}})
            (\ol{\om \til{n}_{N_3}}) dxdt\Bigr{|} \notag \\
 &\lec \Bigl{\|} \sum_{N_1 \ll N_3} \om_1^{-1}\til{u}_{N_1}\Bigr{\|}_{L^3_{t,x}} \|\om_1^{-1}\til{v}_{N_2}\|_{L^3_{t,x}} 
            \|\om \til{n}_{N_3}\|_{L^3_{t,x}}.                                                                                                \label{k1c} 
}
Discarding $\om_1^{-1}$, then $N_1 \ll N_3 \lec 1$ and the same manner as \eqref{4dr2}, we find 
\EQS{
 \Bigl{\|} \sum_{N_1 \ll N_3} \om_1^{-1}\til{u}_{N_1}\Bigr{\|}_{L^3_{t,x}} 
   \lec \LR{N_1}^{(d-2)/6} \Bigl{\|} \sum_{N_1 \ll N_3} \til{u}_{N_1}\Bigr{\|}_{V^2_{K_{\pm}}} 
   \lec \|P_{<1}u\|_{V^2_{K_{\pm}}}.                                                                                                       \label{k1d}  
}
Collecting \eqref{K11}, \eqref{k1c}, \eqref{k1d}, \eqref{4dr3}, \eqref{4dr4} and $N_2 \sim N_3 \lec 1$, we obtain 
\EQQS{
 K_{1,1} &\lec \sum_{N_2 \lec 1} N_2^{2s_c} (\|P_{<1}u\|_{V^2_{K_{\pm}}} \LR{N_2}^{(d-8)/6}\|v_{N_2}\|_{V^2_{K_{\pm}}}
                   N_2^{(d+4)/6})^2 \\
          &\lec \|P_{<1}u\|_{V^2_{K_{\pm}}}^2 \sum_{N_2 \lec 1} N_2^{2s_c}\|v_{N_2}\|_{V^2_{K_{\pm}}}^2 \\
          &\lec \|u\|_{Y^{s_c}_{K_{\pm}}}^2 \|v\|_{Y^{s_c}_{K_{\pm}}}^2. 
}
By the same manner as the estimate for Lemma \ref{tri} $(iv), i=5$, we obtain 
\EQS{
 &\Bigl{|} \int_{\R^{1+d}} \Bigl{(}\sum_{N_1 \ll N_3}Q_{\ge M} \om_1^{-1} \til{u}_{N_1}\Bigr{)}(\ol{Q_2  \om_1^{-1} \til{v}_{N_2}})
     (\ol{Q_3 \om \til{n}_{N_3}})dxdt\Bigr{|} \notag \\ 
 &\lec \|u\|_{Y^{s_c}_{K_{\pm}}}\|v_{N_2}\|_{V^2_{K_{\pm}}}\|n_{N_3}\|_{V^2_{W_{\pm c}}}.                              \label{k121c}
}
From \eqref{K121a}, \eqref{k121} and \eqref{k121c}, we have 
\EQQS{
 K_{1,2,1} 
  &\lec  \sum_{N_3 \gg 1} N_3^{2s_c} \sup_{\|n\|_{V^2_{K_{\pm}}}=1} \Bigl{|}\sum_{N_2 \sim N_3} 
                \sum_{N_1 \ll N_3} \int_{\R^{1+d}} (Q_{\ge M} \om_1^{-1} \til{u}_{N_1})(\ol{Q_2  \om_1^{-1} \til{v}_{N_2}}) \\
  &\qquad \qquad    \cross (\ol{Q_3 \om \til{n}_{N_3}})dxdt\Bigr{|}^2 \\
  &\lec \sum_{N_2 \ge 1} N_2^{2s_c}(\|u\|_{Y^{s_c}_{K_{\pm}}} \|v_{N_2}\|_{V^2_{K_{\pm}}})^2  
    \lec \|u\|_{Y^{s_c}_{K_{\pm}}}^2 \|v\|_{Y^{s_c}_{K_{\pm}}}^2.
}
By the same manner as the estimate for Lemma \ref{tri} $(iv), i=6$, we obtain 
\EQS{
 &\Bigl{|} \int_{\R^{1+d}} \Bigl{(}\sum_{N_1 \ll N_3}Q_1 \om_1^{-1} \til{u}_{N_1}\Bigr{)}(\ol{Q_{\ge M} \om_1^{-1} \til{v}_{N_2}})
     (\ol{Q_3 \om \til{n}_{N_3}})dxdt\Bigr{|} \notag \\ 
 &\lec \|u\|_{Y^{s_c}_{K_{\pm}}}\|v_{N_2}\|_{V^2_{K_{\pm}}}\|n_{N_3}\|_{V^2_{W_{\pm c}}}.                              \label{k122c}
}
From \eqref{K122a}, \eqref{k122} and \eqref{k122c}, we have 
\EQQS{
 K_{1,2,2} 
  &\lec  \sum_{N_3 \gg 1} N_3^{2s_c} \sup_{\|n\|_{V^2_{K_{\pm}}}=1} \Bigl{|}\sum_{N_2 \sim N_3} 
                \sum_{N_1 \ll N_3} \int_{\R^{1+d}} (Q_1 \om_1^{-1} \til{u}_{N_1})(\ol{Q_{\ge M} \om_1^{-1} \til{v}_{N_2}}) \\
  &\qquad \qquad    \cross (\ol{Q_3 \om \til{n}_{N_3}})dxdt\Bigr{|}^2 \\
  &\lec \sum_{N_2 \ge 1} N_2^{2s_c}(\|u\|_{Y^{s_c}_{K_{\pm}}} \|v_{N_2}\|_{V^2_{K_{\pm}}})^2  
    \lec \|u\|_{Y^{s_c}_{K_{\pm}}}^2 \|v\|_{Y^{s_c}_{K_{\pm}}}^2.
}
By the same manner as the estimate for Lemma \ref{tri} $(iv), i=4$, we obtain 
\EQS{
 &\Bigl{|} \int_{\R^{1+d}} \Bigl{(}\sum_{N_1 \ll N_3}Q_1 \om_1^{-1} \til{u}_{N_1}\Bigr{)}(\ol{Q_2 \om_1^{-1} \til{v}_{N_2}})
     (\ol{Q_{\ge M} \om \til{n}_{N_3}})dxdt\Bigr{|} \notag \\ 
 &\lec \|u\|_{Y^{s_c}_{K_{\pm}}}\|v_{N_2}\|_{V^2_{K_{\pm}}}\|n_{N_3}\|_{V^2_{W_{\pm c}}}.                              \label{k123c}
}
From \eqref{K123a}, \eqref{k123} and \eqref{k123c}, we have 
\EQQS{
 K_{1,2,3} 
  &\lec  \sum_{N_3 \gg 1} N_3^{2s_c} \sup_{\|n\|_{V^2_{K_{\pm}}}=1} \Bigl{|}\sum_{N_2 \sim N_3} 
                \sum_{N_1 \ll N_3} \int_{\R^{1+d}} (Q_1 \om_1^{-1} \til{u}_{N_1})(\ol{Q_2 \om_1^{-1} \til{v}_{N_2}}) \\
  &\qquad \qquad    \cross (\ol{Q_{\ge M} \om \til{n}_{N_3}})dxdt\Bigr{|}^2 \\
  &\lec \sum_{N_2 \ge 1} N_2^{2s_c}(\|u\|_{Y^{s_c}_{K_{\pm}}} \|v_{N_2}\|_{V^2_{K_{\pm}}})^2  
    \lec \|u\|_{Y^{s_c}_{K_{\pm}}}^2 \|v\|_{Y^{s_c}_{K_{\pm}}}^2.
}
By symmetry, the estimate for $K_2$ is obtained by the same manner as the estimate for $K_1$. 
For $d = 4$, from \eqref{k3a}, Lemma \ref{tri} $(i)$ and the Cauchy-Schwarz inequality to have 
\EQQS{
 K_3^{1/2} 
  &\lec \sum_{N_2} \sum_{N_1 \sim N_2} \Bigl{\{} \sum_{N_3 \lec N_2} (\LR{N_2}^{-4/3}N_3^{4/3}\|u_{N_1}\|_{V^2_{K_{\pm}}} 
            \|v_{N_2}\|_{V^2_{K_{\pm}}})^2 \Bigr{\}}^{1/2} \\
  &\lec \sum_{N_2} \sum_{N_1 \sim N_2} \bigl{(}\LR{N_2}^{-8/3}N_2^{8/3} \|u_{N_1}\|_{V^2_{K_{\pm}}}^2 
            \|v_{N_2}\|_{V^2_{K_{\pm}}}^2 \bigr{)}^{1/2} \\
  &\lec \Bigl{(}\sum_{N} \LR{N}^{-4/3}N^{4/3} \|u_N\|_{V^2_{K_{\pm}}}^2\Bigr{)}^{1/2} 
            \Bigl{(}\sum_{N} \LR{N}^{-4/3} N^{4/3} \|v_N\|_{V^2_{K_{\pm}}}^2\Bigr{)}^{1/2}.
}
By $\LR{N}^{-4/3} \le 1$, we have 
\EQQS{
 \sum_{N < 1} \LR{N}^{-4/3} N^{4/3}\|u_N\|_{V^2_{K_{\pm}}}^2 
   \lec \sum_{N < 1} N^{4/3} \|P_{<1}u\|_{V^2_{K_{\pm}}}^2 
   \lec \|P_{<1}u\|_{V^2_{K_{\pm}}}^2.  
}
Hence, for $d=4$, we obtain 
\EQS{
 K_3^{1/2} \lec \|u\|_{Y^0_{K_{\pm}}} \|v\|_{Y^0_{K_{\pm}}}.        \label{k3r4}
}  
For $d > 4$, from \eqref{k3a} and Lemma \ref{tri} $(i)$, we have 
\EQS{
 K_3^{1/2} 
  &\lec \sum_{N_2} \sum_{N_1 \sim N_2} \Bigl{\{} \sum_{N_3 \lec N_2} N_3^{2s_c} (\LR{N_2}^{(d-8)/3}N_3^{(d+4)/6}
            \|u_{N_1}\|_{V^2_{K_{\pm}}} \|v_{N_2}\|_{V^2_{K_{\pm}}})^2 \Bigr{\}}^{1/2} \notag \\
  &\lec \sum_{N_2}\sum_{N_1 \sim N_2} \Bigl{(} \sum_{N_3 \lec N_2} N_3^{4(d-2)/3}\LR{N_2}^{2(d-8)/3}
              \|u_{N_1}\|_{V^2_{K_{\pm}}}^2\|v_{N_2}\|_{V^2_{K_{\pm}}}^2\Bigr{)}^{1/2} \notag \\
  &\lec \sum_{N_2}\sum_{N_1 \sim N_2} N_2^{2(d-2)/3}\LR{N_2}^{(d-8)/3}
              \|u_{N_1}\|_{V^2_{K_{\pm}}}\|v_{N_2}\|_{V^2_{K_{\pm}}}.                                                          \label{K3r1}
}
For $d \le 8$, then $\LR{N_2}^{(d-8)/3} \le N_2^{(d-8)/3}$. 
Hence by \eqref{K3r1} and the Cauchy-Schwarz inequality, for $4 < d \le 8$, we have  
\EQS{
 K_3^{1/2} &\lec \sum_{N_2}\sum_{N_1 \sim N_2} N_2^{d-4}
                        \|u_{N_1}\|_{V^2_{K_{\pm}}}\|v_{N_2}\|_{V^2_{K_{\pm}}} \notag \\
              &\lec \|u\|_{Y^{s_c}_{K_{\pm}}} \|v\|_{Y^{s_c}_{K_{\pm}}}.   \label{k3r48}
}  
For $d > 8$ and $N_2 < 1$, it holds that $\LR{N_2} \lec 1$. 
Hence, by \eqref{K3r1} to have 
\EQS{
 K_3^{1/2} &\lec \sum_{N_2 < 1} \sum_{N_1 \sim N_2} N_2^{2(d-2)/3}\|u_{N_1}\|_{V^2_{K_{\pm}}} \|v_{N_2}\|_{V^2_{K_{\pm}}} 
                             \notag \\
              &\lec \|u\|_{Y^{s_c}_{K_{\pm}}}\|P_{<1}v\|_{V^2_{K_{\pm}}}.                                                \label{K3r8l1}
}   
For $d > 8$ and $N_2 \ge 1$, it holds that $\LR{N_2}^{(d-8)/3} \sim N_2^{(d-8)/3}$. 
Thus by \eqref{k3a} and the Cauchy-Schwarz inequality to have 
\EQS{
 K_3^{1/2} &\lec \sum_{N_2}\sum_{N_1 \sim N_2} N_2^{d-4}
                        \|u_{N_1}\|_{V^2_{K_{\pm}}}\|v_{N_2}\|_{V^2_{K_{\pm}}} \notag \\
              &\lec \|u\|_{Y^{s_c}_{K_{\pm}}} \|v\|_{Y^{s_c}_{K_{\pm}}}.                                   \label{K3r8g1}
}
Collecting \eqref{k3r4}, \eqref{k3r48}--\eqref{K3r8g1}, we obtain 
$K_3^{1/2} \lec \|u\|_{Y^{s_c}_{K_{\pm}}} \|v\|_{Y^{s_c}_{K_{\pm}}}$ for $d \ge 4$.

\end{proof}

\section{The proof of the main theorem}
We define 
\EQQS{
 u_{\pm} := \om_1 u \pm i\p_t u, \quad  
 n_{\pm} := n \pm i (c\om)^{-1} \p_t n
}
where $\om_1 := (1-\laplacian )^{1/2}, \om := (-\laplacian )^{1/2}$.  
Then the wave equation in \eqref{KGZ} is rewritten into
\EQS{
 \begin{cases}
  i\p_t u_{\pm} \mp \om_1 u_{\pm} 
    = \pm (1/4)(n_+ + n_-)(\om_1^{-1}u_+ + \om_1^{-1}u_-), 
               \qquad (t,x) \in [-T,T] \cross \R^d, \\
  i\p_t n_{\pm} \mp c\om n_{\pm} 
    = \pm (4c)^{-1}\om | \om_1^{-1} u_+ + \om_1^{-1} u_-|^2, \qquad (t,x) \in [-T,T] \cross \R^d, \\
  (u_{\pm}, n_{\pm})|_{t=0} = (u_{\pm 0}, n_{\pm 0}) 
                                \in H^s(\R^d) \cross \dot{H}^s(\R^d). 
                                                                                                                                            \label{KGZ''}
 \end{cases}  
}

Hence by the Duhamel principle, 
we consider the following integral equation corresponding to \eqref{KGZ''} on the time interval 
$[0, T)$ with $0< T \le \I:$
\EQ{ \label{int-eq}
  u_{\pm}=\Phi_1(u_{\pm},n_+,n_-), \quad  n_{\pm}=\Phi_2(n_{\pm},u_+,u_-), 
}
where
\EQQS{
  &\Phi_1(u_{\pm},n_+,n_-) := K_{\pm}(t)u_{\pm0} \pm (1/4)\{ I_{T,K_{\pm}}(n_+,u_+)(t) + I_{T,K_{\pm}}(n_+,u_-)(t) \\
                                 &\qquad \qquad \qquad \qquad  
                                      + I_{T,K_{\pm}}(n_-,u_+)(t) + I_{T,K_{\pm}}(n_-,u_-)(t) \}, \\
  &\Phi_2(n_{\pm},u_+,u_-) := W_{\pm c}(t)n_{\pm 0} \pm (4c)^{-1}\{ I_{T,W_{\pm c}}(u_+,u_+)(t) + I_{T,W_{\pm c}}(u_+,u_-)(t) \\
                                 &\qquad \qquad \qquad \qquad 
                                      + I_{T,W_{\pm c}}(u_-,u_+)(t) + I_{T,W_{\pm c}}(u_-,u_-)(t)\}.
}
\begin{prop}\label{main_prop1}
 (i) Let $s=1/4$ for $d = 4$ or $s=(d^2-3d-2)/2(d+1)$ for $d \ge 5$. 
     Let $\de > 0$ be arbitrary. 
     Then, for any initial data 
    $(u_{\pm0}, n_{\pm 0}) \in B_{\de }(H^s(\R^d) \cross \dot{H}^s(\R^d))$, 
     there exists $T > 0$ and a unique solution of \eqref{int-eq} on $[0,T]$ such that 
\EQQ{
 (u_{\pm}, n_{\pm}) \in Y^s_{K_{\pm}}([0, T]) \cross \dot{Y}^s_{W_{\pm c}}([0, T])
      \subset C([0, T]; H^s(\R^d)) \cross C([0, T]; \dot{H}^s(\R^d)). 
} 
Moreover, let $d \ge 4, s=s_c=d/2-2$ and $\de > 0$ be sufficiently small. 
If $(u_{\pm0}, n_{\pm 0}) \in B_{\de }(H^s(\R^d) \cross \dot{H}^s(\R^d))$ be radial, 
then for all $0 < T < \I$, there exists a unique spherically symmetric solution of \eqref{int-eq} on $[0,T]$ such that 
\EQQ{
 (u_{\pm}, n_{\pm}) \in Y^s_{K_{\pm}}([0, T]) \cross \dot{Y}^s_{W_{\pm c}}([0, T])
      \subset C([0, T]; H^s(\R^d)) \cross C([0, T]; \dot{H}^s(\R^d)). 
}  
 (ii) The flow map obtained by (i):\\
    $B_{\de }(H^s(\R^d)) \cross B_{\de }(\dot{H}^s(\R^d)) 
 \ni (u_{\pm 0}, n_{\pm 0}) \mapsto
           (u_{\pm}, n_{\pm}) \in Y^s_{K_{\pm}}([0, T]) \cross \dot{Y}^s_{W_{\pm c}}([0, T])$
    is Lipschitz continuous.  
\end{prop}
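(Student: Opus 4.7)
The plan is to set up a standard contraction mapping argument for the map $\Phi = (\Phi_1, \Phi_2)$ on a closed ball in the product space $Y^s_{K_\pm}([0,T]) \times \dot{Y}^s_{W_{\pm c}}([0,T])$, using the bilinear estimates from Proposition \ref{BE} as the workhorse. Since $(u_\pm, n_\pm)$ involves both signs, I will work on the product over $\pm$ and use the quantities $U := \|u_+\|_{Y^s_{K_+}} + \|u_-\|_{Y^s_{K_-}}$ and $N := \|n_+\|_{\dot Y^s_{W_{+c}}} + \|n_-\|_{\dot Y^s_{W_{-c}}}$.

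First I would establish the linear estimates. Because $K_\pm(\cdot)u_{\pm 0} \in U^2_{K_\pm} \subset Z^s_{K_\pm}$ with norm bounded by $\|u_{\pm 0}\|_{H^s}$ (and similarly on the wave side), and because $Z^s \hookrightarrow Y^s$ by Proposition \ref{embedding}(iii), the homogeneous parts contribute at most a constant multiple of $\delta$. For the Duhamel parts, Proposition \ref{BE} together with the embedding $Z^s \hookrightarrow Y^s$ gives
\begin{equation*}
\|I_{T,K_\pm}(n,v)\|_{Y^s_{K_\pm}} \lec T^\theta \|n\|_{\dot Y^s_{W_{\pm c}}} \|v\|_{Y^s_{K_\pm}}, \qquad \|I_{T,W_{\pm c}}(u,v)\|_{\dot Y^s_{W_{\pm c}}} \lec T^\theta \|u\|_{Y^s_{K_\pm}} \|v\|_{Y^s_{K_\pm}},
\end{equation*}
with $(\theta,s)$ as in Proposition \ref{BE}. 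Summing the four cross terms in $\Phi_1,\Phi_2$, one obtains the schematic bound $\|\Phi(u,n)\|_{Y^s \times \dot Y^s} \le C\delta + CT^\theta (NU + U^2)$.

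Next, I would close the argument by choosing parameters in each of the three regimes. In case (i) with $\theta > 0$ (either $d=4$, $s=1/4$, or $d\ge 5$, $s=s'$), for arbitrary $\delta$ I choose $R = 2C\delta$ for the radius of the ball and then take $T$ so small that $CT^\theta R < 1/4$; this makes $\Phi$ map the ball $\{U+N \le R\}$ into itself and be a strict contraction there (the difference estimate is identical by multilinearity). In the radial critical case ($\theta = 0$, $s=s_c$) the $T$-factor is absent, so the contraction must come from smallness of $\delta$: I take $\delta$ sufficiently small and $R = 2C\delta$ so that $CR < 1/4$, which yields a global-in-time fixed point since the bilinear estimate holds for all $0 < T < \infty$ under the radial assumption. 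In this critical regime, the radial symmetry of the constructed solution follows because radiality is preserved by the linear flows $K_\pm, W_{\pm c}$ and by the nonlinearities, and uniqueness forces the limit to be radial.

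For uniqueness within $Y^s \times \dot Y^s$ on $[0,T]$ I would use Proposition \ref{unique} in the standard way: if $(u^{(1)}, n^{(1)})$ and $(u^{(2)}, n^{(2)})$ are two solutions with the same data, then their difference vanishes at $t=0$, so on a possibly shorter subinterval $[0,T']$ its $Y^s\times \dot Y^s$ norm is less than $\varepsilon$; the bilinear estimate applied to the difference (which is linear in one factor of the difference and multilinear in sums of the two solutions) then forces the difference to be zero on $[0,T']$, and a continuation argument extends to $[0,T]$. The Lipschitz continuity of the flow map in (ii) is an immediate consequence of the same difference estimate applied to two sets of data: one gets
\begin{equation*}
\|u^{(1)} - u^{(2)}\|_{Y^s} + \|n^{(1)} - n^{(2)}\|_{\dot Y^s} \le C\bigl(\|u^{(1)}_0 - u^{(2)}_0\|_{H^s} + \|n^{(1)}_0 - n^{(2)}_0\|_{\dot H^s}\bigr) + \tfrac{1}{2}\bigl(\|u^{(1)} - u^{(2)}\|_{Y^s} + \|n^{(1)} - n^{(2)}\|_{\dot Y^s}\bigr),
\end{equation*}
after absorbing the small nonlinear term to the left. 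The main obstacle is essentially bookkeeping: keeping track of the various sign combinations $\pm$ in the system and ensuring that the Duhamel formula truncated by $\1_{[0,T)}$ is correctly placed inside the $U^2_{K_\pm}$-norm so that Proposition \ref{BE} applies verbatim; beyond that the argument is routine given the bilinear estimates.
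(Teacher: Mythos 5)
Your proposal is correct and follows essentially the same route as the paper's proof: a contraction argument for $(\Phi_1,\Phi_2)$ on a ball in $Y^s_{K_{\pm}}([0,T])\times\dot{Y}^s_{W_{\pm c}}([0,T])$ driven by Proposition \ref{BE}, with smallness supplied by the factor $T^{\th}$ in the non-radial cases and by smallness of $\de$ (no $T$-dependence) in the radial critical case, uniqueness by an absorption/continuation argument using Proposition \ref{unique}, and Lipschitz dependence of the flow map from the same difference estimates. The remaining differences (your choice $R=2C\de$ versus the paper's $r=\min\{1,c\}/(4C)$, $\de=r^2$, and the bookkeeping of the constant $c$ and of the $\pm$ sign combinations) are cosmetic.
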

\begin{rem} \label{time_rev}
Due to the time reversibility of the Klein-Gordon-Zakharov equation, 
Porpositions \ref{main_prop1} also holds in corresponding time interval $[-T,0]$
\end{rem} 
\begin{rem} \label{gsol}
By $(i)$ in Proposition \ref{main_prop1} and Remark \ref{time_rev}, for any $T>0$, we have solutions to 
\eqref{int-eq} $(u_{\pm}(t),n_{\pm}(t))$ on $[0,T]$ and $[-T,0]$. 
If radial initial data $(u_{\pm0}, n_{\pm 0}) \in B_{\de }(H^s(\R^d) \cross \dot{H}^s(\R^d))$, 
then we can take $T$ arbitrary large and by uniqueness, spherically symmetric function   
$(u_{\pm}(t),n_{\pm}(t)) \in C((-\I,\I);H^s(\R^d)) \cross C((-\I,\I);\dot{H}^s(\R^d))$ can be defined uniquely. 
\end{rem} 
\begin{prop} \label{main_prop2}
 Let the spherically symmetric solution $(u_{\pm}(t), n_{\pm}(t))$ to \eqref{int-eq} on $(-\I,\I)$ obtained by 
Proposirion \ref{main_prop1}, Remark \ref{time_rev} and Remark \ref{gsol} with radial initial data 
$(u_{\pm 0}, n_{\pm 0}) \in B_{\de}(H^s(\R^d) \cross \dot{H}^s(\R^d))$. 
Then, there exist $(u_{\pm, +\I}, n_{\pm, +\I})$ and $(u_{\pm, -\I}, n_{\pm, -\I})$ in $H^s(\R^d) \cross \dot{H}^s(\R^d)$ 
such that 
\EQQS{
 \|u_{\pm}(t)-K_{\pm}(t)u_{\pm, +\I}\|_{H^s_x(\R^d)} + \|n_{\pm}(t)-W_{\pm c}(t)n_{\pm, +\I}\|_{\dot{H}^s_x(\R^d)} \to 0
}  
as $t \to +\I$ and 
\EQQS{
 \|u_{\pm}(t)-K_{\pm}(t)u_{\pm, -\I}\|_{H^s_x(\R^d)} + \|n_{\pm}(t)-W_{\pm c}(t)n_{\pm, -\I}\|_{\dot{H}^s_x(\R^d)} \to 0
}  
as $t \to -\I$.  
\end{prop}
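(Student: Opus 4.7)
The plan is to run the standard $U^2/V^2$ scattering argument, whose key ingredient — a time-global bilinear estimate with constant independent of the time interval — is already provided by Proposition \ref{BE} in the radial critical case $(\th,s)=(0,s_c)$. Setting
\EQQS{
 D_{u_\pm}(t):=\pm\tfrac14\int_0^t K_\pm(t-t')\bigl[(n_++n_-)(\om_1^{-1}u_++\om_1^{-1}u_-)\bigr](t')\,dt',
}
and $D_{n_\pm}$ analogously, the bilinear estimate applied to the tail $\1_{[T_1,T_2)}$ with $T_1,T_2\to\I$, together with the global finiteness of $\|(u_\pm,n_\pm)\|_{Y^{s_c}_{K_\pm}\times\dot{Y}^{s_c}_{W_{\pm c}}}$ guaranteed by Remark \ref{gsol}, shows that $D_{u_\pm}\in Z^{s_c}_{K_\pm}$ and $D_{n_\pm}\in\dot{Z}^{s_c}_{W_{\pm c}}$ globally on $\R$.

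Next, I extract the scattering states by invoking the defining property of $U^2$: every $U^2$-atom is eventually constant in $t$, so for $f\in U^2_A$ with $A\in\{K_\pm,W_{\pm c}\}$ the pullback $A(-t)f(t)$ admits an $L^2_x$-limit as $t\to\pm\I$. Applied to each Littlewood--Paley piece $P_N D_{u_\pm}\in U^2_{K_\pm}$ and $P_N D_{n_\pm}\in U^2_{W_{\pm c}}$, and summed over $N$ with the dyadic weights of Definition \ref{defX}, this yields
\EQQS{
 D_{u_\pm,+\I}:=\lim_{t\to+\I}K_\pm(-t)D_{u_\pm}(t)\in H^{s_c}(\R^d),\qquad
 D_{n_\pm,+\I}:=\lim_{t\to+\I}W_{\pm c}(-t)D_{n_\pm}(t)\in\dot{H}^{s_c}(\R^d).
}
Define $u_{\pm,+\I}:=u_{\pm 0}+D_{u_\pm,+\I}$ and $n_{\pm,+\I}:=n_{\pm 0}+D_{n_\pm,+\I}$; these are the candidate scattering data.

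The asymptotic statement then follows from the identity
\EQQS{
 u_\pm(t)-K_\pm(t)u_{\pm,+\I}=K_\pm(t)\bigl[K_\pm(-t)D_{u_\pm}(t)-D_{u_\pm,+\I}\bigr],
}
whose $H^{s_c}$-norm equals (by unitarity of $K_\pm$ on $H^{s_c}$) that of the bracketed quantity, which tends to $0$ by the very definition of the limit; the wave component is identical. The limit $t\to-\I$ is obtained by the symmetric version of the same argument, via Remark \ref{time_rev}. The only genuinely delicate point in the whole proof is the $T\to\I$ Cauchy argument in the first paragraph — one must show that the finite-$T$ Duhamel terms converge in $Z^{s_c}_{K_\pm}$ (resp.\ $\dot{Z}^{s_c}_{W_{\pm c}}$), which is in turn controlled by applying Proposition \ref{BE} to the tail and crucially uses the time-independence of the constant in the radial critical case. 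Beyond this, the proof is formal.
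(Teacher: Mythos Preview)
Your approach is correct in spirit and reaches the same conclusion, but it takes a longer route than the paper. The paper does not isolate the Duhamel term or invoke the $U^2$-atom structure at all: it simply observes that the fixed-point radius $r$ in Proposition \ref{main_prop1} is independent of $T$ in the radial critical case, so $\|u_\pm\|_{Y^s_{K_\pm}([0,T])}$ and $\|n_\pm\|_{\dot{Y}^s_{W_{\pm c}}([0,T])}$ are bounded by a constant $M$ for all $T$. Taking any finite partition $\{t_k\}$, choosing $T$ large enough to contain it, and using Lemma \ref{estX} then gives a finite global $V^2_0$-norm for $\langle\nabla_x\rangle^s K_\pm(-\cdot)u_\pm(\cdot)$; Proposition \ref{embedding}(i) immediately furnishes the $L^2_x$-limits at $t\to\pm\infty$. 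This is more economical than your path through $Z^{s_c}$: you re-invoke the bilinear estimate on tails to place $D_{u_\pm}$ in a $U^2$-based space and then extract limits Littlewood--Paley piece by piece, whereas the paper stays entirely in the $V^2$-based solution space, where existence of limits is built into Proposition \ref{embedding}(i).

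Two small caveats on your sketch. First, the ``global finiteness'' you attribute to Remark \ref{gsol} is not actually stated there (that remark only records $C(\R;H^s\times\dot H^s)$); what you need is the $T$-independence of $r$, which is precisely the first sentence of the paper's proof. Second, your Cauchy-on-tails step needs care: $\|\1_{[T_1,T_2)}u_\pm\|_{Y^{s_c}_{K_\pm}}$ does \emph{not} tend to zero as $T_1\to\infty$ (the linear part $K_\pm(t)u_{\pm0}$ contributes $\sim\|u_{\pm0}\|_{H^{s_c}}$ on any interval), so the difference $D^{(T_2)}_{u_\pm}-D^{(T_1)}_{u_\pm}$ is not obviously small in $Z^{s_c}$ from the bilinear estimate alone. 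The clean fix is to first establish the global $Y^{s_c}$ bound (exactly as the paper does) and then either apply the bilinear estimate once with $T=\infty$, or---simpler still---bypass $Z^{s_c}$ entirely and read off the limits directly from $V^2$.
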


\begin{proof}[proof of Proposition \ref{main_prop1}]
First, we prove $(i)$. 
By Proposition \ref{Str}, there exists $C>0$ such that   
\EQQS{
 \|K_{\pm}(t)u_{\pm0}\|_{Y^s_{K_{\pm}}} \le C\|u_{\pm0}\|_{H^s}, \qquad 
 \|W_{\pm c}(t)n_{\pm 0}\|_{\dot{Y}^s_{W_{\pm c}}} \le C\|n_{\pm 0}\|_{\dot{H}^s}.
} 
We denote time interval $I := [0, T]$. 
If $(u_{\pm 0}, n_{\pm 0}) \in B_{\de}(H^s(\R^d) \cross \dot{H}^s(\R^d)), 
(u_{\pm}, n_{\pm}) \in B_r(Y^s_{K_{\pm}}(I) \cross \dot{Y}^s_{W_{\pm c}}(I))$,  
then by Proposition \ref{BE}, for $(\th, s)=(1/4,1/4), d=4$ or for $(\th, s)=(1/(d+1), (d^2-3d-2)/2(d+1)), d \ge 5$, it holds that   
\EQQS{
 &\|\Phi_1(u_{\pm}, n_+,n_-)\|_{Y^s_{K_{\pm}}(I)} \\
  &\le C\|u_{\pm0}\|_{H^s}+(1/4)CT^{\th}(\|n_+\|_{\dot{Y}^s_{W_{+c}}(I)}\|u_+\|_{Y^s_{K_+}(I)} 
         + \|n_+\|_{\dot{Y}^s_{W_{+c}}(I)}\|u_-\|_{Y^s_{K_-}(I)} \\
  &\qquad 
         + \|n_-\|_{\dot{Y}^s_{W_{-c}}(I)}\|u_+\|_{Y^s_{K_+}(I)} 
         + \|n_-\|_{\dot{Y}^s_{W_{-c}}(I)}\|u_-\|_{Y^s_{K_-}(I)}) \\
  &\le C\de + CT^{\th}r^2, \\
 &\|\Phi_2(n_{\pm},u_+,u_-)\|_{\dot{Y}^s_{W_{\pm c}}(I)} \\
  &\le C\|n_{\pm 0}\|_{H^s}+(CT^{\th}/4c)(\|u_+\|^2_{Y^s_{K_+}(I)} + 2\|u_+\|_{Y^s_{K_+}(I)}\|u_-\|_{Y^s_{K_-}(I)} 
         + \|u_-\|_{Y^s_{K_-}(I)}^2) \\
  &\le C\de +CT^{\th}r^2/c.
}
We take $r=2C\de$ and $T > 0$  satisfying 
\EQS{
 4CT^{\th}r \le \min\{1,c\}.        \label{T}
}
Then we have   
\EQQS{
\|\Phi_1(u_{\pm}, n_+,n_-)\|_{Y^s_{K_{\pm}}(I)} \le r,\qquad \|\Phi_2(n_{\pm},u_+,u_-)\|_{\dot{Y}^s_{W_{\pm c}}(I)} \le r.
}
Hence, $(\Phi_1, \Phi_2)$ is a map from $B_r(Y^s_{K_{\pm}}([0,T]) \cross \dot{Y}^s_{W_{\pm c}}([0,T]))$ into itself.  
Similarly, we assume $(v_{\pm0}, m_{\pm 0}) \in B_{\de}(H^s(\R^d) \cross \dot{H}^s(\R^d)), 
(v_{\pm}, m_{\pm}) \in B_r(Y^s_{K_{\pm}}(I) \cross \dot{Y}^s_{W_{\pm c}}(I))$, then it holds that 
\EQS{
 &\|\Phi_1(u_{\pm}, n_+, n_-)-\Phi_1(v_{\pm}, m_+,m_-)\|_{Y^s_{K_{\pm}}(I)} \notag \\
  &\le (1/4)(\|I_{T,K_{\pm}}(n_+, u_+)(t)-I_{T,K_{\pm}}(m_+, v_+)(t)\|_{Y^s_{K_{\pm}}(I)} \notag \\
  &\qquad \qquad + \|I_{T,K_{\pm}}(n_+, u_-)(t)-I_{T,K_{\pm}}(m_+, v_-)(t)\|_{Y^s_{K_{\pm}}(I)} \notag \\
  &\qquad \qquad + \|I_{T,K_{\pm}}(n_-, u_+)(t)-I_{T,K_{\pm}}(m_-, v_+)(t)\|_{Y^s_{K_{\pm}}(I)} \notag \\
  &\qquad \qquad + \|I_{T,K_{\pm}}(n_-, u_-)(t)-I_{T,K_{\pm}}(m_-, v_-)(t)\|_{Y^s_{K_{\pm}}(I)}).  \label{con}
}
By Proposition \ref{BE}, we have 
\EQS{
 &\|I_{T,K_{\pm}}(n_+, u_+)(t)-I_{T,K_{\pm}}(m_+, v_+)(t)\|_{Y^s_{K_{\pm}}(I)} \notag \\
   &\le CT^{\th}(\|n_+-m_+\|_{\dot{Y}^s_{W_{+c}}(I)}\|u_+\|_{Y^s_{K_+}(I)} + \|m_+\|_{Y^s_{W_{+c}}(I)}\|u_+ - v_+\|_{Y^s_{K_+}(I)}).      
              \label{pp}
}
Similarly, we have 
\EQS{
 &\|I_{T,K_{\pm}}(n_+, u_-)(t)-I_{T,K_{\pm}}(m_+, v_-)(t)\|_{Y^s_{K_{\pm}}(I)} \notag \\
   &\le CT^{\th}(\|n_+ - m_+\|_{\dot{Y}^s_{W_{+c}}(I)}\|u_-\|_{Y^s_{K_-}(I)} + \|m_+\|_{\dot{Y}^s_{W_{+c}}(I)} 
              \|u_- - v_-\|_{Y^s_{K_-}(I)}),  \label{pn} \\
 &\|I_{T,K_{\pm}}(n_-, u_+)(t)-I_{T,K_{\pm}}(m_-, v_+)(t)\|_{Y^s_{K_{\pm}}(I)} \notag \\
   &\le CT^{\th}(\|n_- - m_-\|_{\dot{Y}^s_{W_{-c}}(I)}\|u_+\|_{Y^s_{K_+}(I)} + \|m_-\|_{\dot{Y}^s_{W_{-c}}(I)}
              \|u_+ - v_+\|_{Y^s_{K_+}(I)}),  \label{np} \\
 &\|I_{T,K_{\pm}}(n_-, u_-)(t)-I_{T,K_{\pm}}(m_-, v_-)(t)\|_{Y^s_{K_{\pm}}(I)} \notag \\
   &\le CT^{\th}(\|n_- - m_-\|_{\dot{Y}^s_{W_{-c}}(I)}\|u_-\|_{Y^s_{K_-}(I)} + \|m_-\|_{\dot{Y}^s_{W_{-c}}(I)}
              \|u_- - v_-\|_{Y^s_{K_-}(I)}).  \label{nn}
}
Hence from $\|u_{\pm}\|_{Y^s_{K_{\pm}}(I)} \le r, \|m_{\pm}\|_{\dot{Y}^s_{W_{\pm c}}(I)} \le r$, 
\eqref{con}--\eqref{nn} and \eqref{T}, we have 
\EQS{
 &\|\Phi_1(u_{\pm}, n_+, n_-)-\Phi_1(v_{\pm}, m_+,m_-)\|_{Y^s_{K_{\pm}}(I)} \notag \\
  &\le (1/8)(\|u_+ - v_+\|_{Y^s_{K_+}(I)} + \|u_- - v_-\|_{Y^s_{K_-}(I)} \notag \\
   &\qquad + \|n_+ - m_+\|_{\dot{Y}^s_{W_{+c}}(I)} + \|n_- - m_-\|_{\dot{Y}^s_{W_{-c}}(I)}).            \label{con1}
}
Similarly, we have 
\EQS{
 &\|\Phi_2 (n_{\pm},u_+,u-) - \Phi_2 (m_{\pm}, v_+, v_-)\|_{\dot{Y}^s_{W_{\pm c}}(I)} \notag \\
  &=(4c)^{-1} (\|I_{T,W_{\pm c}}(u_+, u_+)(t)-I_{T,W_{\pm c}}(v_+, v_+)(t)\|_{\dot{Y}^s_{W_{\pm c}}(I)} \notag \\
   &\qquad \qquad + \|I_{T,W_{\pm c}}(u_+, u_-)(t)-I_{T,W_{\pm c}}(v_+, v_-)(t)\|_{\dot{Y}^s_{W_{\pm c}}(I)} \notag \\
   &\qquad \qquad + \|I_{T,W_{\pm c}}(u_-, u_+)(t)-I_{T,W_{\pm c}}(v_-, v_+)(t)\|_{\dot{Y}^s_{W_{\pm c}}(I)} \notag \\
   &\qquad \qquad + \|I_{T,W_{\pm c}}(u_-, u_-)(t)-I_{T,W_{\pm c}}(v_-, v_-)(t)\|_{\dot{Y}^s_{W_{\pm c}}(I)}).  \label{con2}
}
By Proposition \ref{BE}, we have 
\EQS{
 &\|I_{T,W_{\pm c}}(u_+, u_+)(t)-I_{T,W_{\pm c}}(v_+, v_+)(t)\|_{\dot{Y}^s_{W_{\pm c}}(I)}  \notag \\
  &\le CT^{\th}(\|u_+\|_{Y^s_{K_+}(I)}+\|v_+\|_{Y^s_{K_+}(I)})\|u_+ - v_+\|_{Y^s_{K_+}(I)}.                      \label{pp2}
}
Similarly, we have 
\EQS{
 &\|I_{T,W_{\pm c}}(u_+, u_-)(t)-I_{T,W_{\pm c}}(v_+, v_-)(t)\|_{\dot{Y}^s_{W_{\pm c}}(I)} \notag \\
  &\le CT^{\th}(\|u_+ - v_+\|_{Y^s_{K_+}(I)}\|u_-\|_{Y^s_{K_-}(I)} + \|v_+\|_{Y^s_{K_+}(I)}\|u_- - v_-\|_{Y^s_{K_-}(I)}),  
                                                                                                           \label{pn2} \\
 &\|I_{T,W_{\pm c}}(u_-, u_+)(t)-I_{T,W_{\pm c}}(v_-, v_+)(t)\|_{\dot{Y}^s_{W_{\pm c}}(I)} \notag \\
  &\le CT^{\th}(\|u_+ - v_+\|_{Y^s_{K_+}(I)}\|u_+\|_{Y^s_{K_+}(I)} + \|v_-\|_{Y^s_{K_-}(I)}\|u_- - v_-\|_{Y^s_{K_-}(I)}),         
                                                                                                           \label{np2} \\ 
 &\|I_{T,W_{\pm c}}(u_-, u_-)(t)-I_{T,W_{\pm c}}(v_-, v_-)(t)\|_{\dot{Y}^s_{W_{\pm c}}(I)}) \notag \\ 
  &\le CT^{\th}(\|u_-\|_{Y^s_{K_-}(I)}+\|v_-\|_{Y^s_{K_-}(I)})\|u_- - v_-\|_{Y^s_{K_+}(I)}. \label{nn2}
}
From $\|u_{\pm}\|_{Y^s_{K_{\pm}}(I)} \le r, \|v_{\pm}\|_{Y^s_{K_{\pm}}(I)} \le r$, \eqref{con2}--\eqref{nn2} and \eqref{T}, 
we obtain 
\EQS{
 &\|\Phi_2 (n_{\pm},u_+,u-) - \Phi_2 (m_{\pm}, v_+, v_-)\|_{\dot{Y}^s_{W_{\pm c}}(I)} \notag \\
  &\le (1/4)(\|u_+ - v_+\|_{Y^s_{K_+}(I)} + \|u_- - v_-\|_{Y^s_{K_-}(I)}).                        \label{con3}
}
Therefore, $(\Phi_1, \Phi_2)$ is a contraction mapping on $B_r(Y^s_{K_{\pm}}([0,T]) \cross \dot{Y}^s_{W_{\pm c}}([0,T]))$.  
Hence, by the Banach fixed point theorem, we have a solution to \eqref{int-eq} in it. 

Next, we prove uniqueness. 
Let $(u_{\pm}, n_{\pm}), (v_{\pm}, m_{\pm}) \in Y^s_{K_{\pm}}([0,T]) \cross \dot{Y}^s_{W_{\pm c}}([0,T])$ 
are two solutions satisfying $(u_{\pm}(0), n_{\pm}(0))=(v_{\pm}(0), m_{\pm}(0))$. 
Moreover, 
\EQQS{
 T' := \sup \{0 \le t \le T \, ; u_{\pm}(t)=v_{\pm}(t), n_{\pm}(t)=m_{\pm}(t) \} <T. 
}
By a translation in $t$, it suffices to consider $T'=0$. 
Fix $0 < \ta \le T$ sufficiently small. From \eqref{con}--\eqref{nn} and Proposition \ref{unique}, we obtain 
\EQS{
 &\|u_+ - v_+\|_{Y^s_{K_+}([0,\ta ])} \notag \\
 &\le (1/4)CT^{\th}\bigl{\{} (\|u_+\|_{Y^s_{K_+}([0,\ta ])} + \|u_-\|_{Y^s_{K_-}([0,\ta ])}) \notag \\
 &\qquad \cross  
                (\|n_+ - m_+\|_{\dot{Y}^s_{W_{+c}}([0, \ta ])} + \|n_- - m_-\|_{\dot{Y}^s_{W_{-c}}([0, \ta ])}) \notag \\
 &\qquad         + (\|m_+\|_{\dot{Y}^s_{K_+}([0,\ta ])} + \|m_-\|_{\dot{Y}^s_{K_-}([0,\ta ])}) 
          (\|u_+ - v_+\|_{Y^s_{K_+}([0, \ta ])} + \|u_- - v_-\|_{Y^s_{K_-}([0, \ta ])})\bigr{\}} \notag \\
 &\le (1/8)(\|n_+ - m_+\|_{\dot{Y}^s_{W_{+c}}([0, \ta ])} + \|n_- - m_-\|_{\dot{Y}^s_{W_{-c}}([0, \ta ])} \notag \\
 &\qquad          + \|u_+ - v_+\|_{Y^s_{K_+}([0, \ta ])} + \|u_- - v_-\|_{Y^s_{K_-}([0, \ta ])}).         \label{u-v}
}
From \eqref{u-v}, we obtain 
\EQS{ 
 &\|u_+ - v_+\|_{Y^s_{K_+}([0,\ta ])}  \notag \\
  &\le (1/7)(\|n_+ -m_+\|_{\dot{Y}^s_{W_{+c}}([0,\ta ])} + \|n_- - m_-\|_{\dot{Y}^s_{W_{-c}}([0,\ta ])} 
                + \|u_- - v_-\|_{Y^s_{K_-}([0,\ta ])}).      \label{upp}
}
Similarly, we have 
\EQS{
 &\|u_- - v_-\|_{Y^s_{K_-}([0,\ta ])}  \notag \\
  &\le (1/7)(\|n_+ -m_+\|_{\dot{Y}^s_{W_{+c}}([0,\ta ])} + \|n_- - m_-\|_{\dot{Y}^s_{W_{-c}}([0,\ta ])} 
                + \|u_+ - v_+\|_{Y^s_{K_+}([0,\ta ])}).      \label{unn}
}
From \eqref{con2}--\eqref{nn2} and Proposition \ref{unique}, we have 
\EQS{
 \|n_{\pm} - m_{\pm}\|_{\dot{Y}^s_{W_{\pm c}}([0,\ta ])} 
   \le (1/4)(\|u_+ - v_+\|_{Y^s_{K_+}([0,\ta ])} + \|u_- - v_-\|_{Y^s_{K_-}([0,\ta ])}).      \label{n-m}
}
Hence, collecting \eqref{u-v}--\eqref{n-m}, we obtain 
\EQQS{
 u_{\pm}=v_{\pm}, \quad n_{\pm} = m_{\pm}
}
on $[0,\ta ]$. 
This contradicts the definition of $T'$.

If $(u_{\pm 0}, n_{\pm 0}) \in B_{\de}(H^s(\R^d) \cross \dot{H}^s(\R^d))$ is radial, $s=s_c=d/2-2$ with $d \ge 4$ and 
$(u_{\pm}, n_{\pm}) \in B_r(Y^s_{K_{\pm}}(I) \cross \dot{Y}^s_{W_{\pm c}}(I))$ is spherically symmetric,  
then by Proposition \ref{BE}, we have 
\EQQS{
 &\|\Phi_1(u_{\pm}, n_+,n_-)\|_{Y^s_{K_{\pm}}(I)} \\ 
  &\le C\de + (1/4)C(\|n_+\|_{\dot{Y}^s_{W_{+c}}(I)}\|u_+\|_{Y^s_{K_+}(I)} 
         + \|n_+\|_{\dot{Y}^s_{W_{+c}}(I)}\|u_-\|_{Y^s_{K_-}(I)} \\
  &\qquad 
         + \|n_-\|_{\dot{Y}^s_{W_{-c}}(I)}\|u_+\|_{Y^s_{K_+}(I)} 
         + \|n_-\|_{\dot{Y}^s_{W_{-c}}(I)}\|u_-\|_{Y^s_{K_-}(I)}), \\
 &\|\Phi_2(n_{\pm},u_+,u_-)\|_{\dot{Y}^s_{W_{\pm c}}(I)} \\
  &\le C\de + (C/4c)(\|u_+\|^2_{Y^s_{K_+}(I)} + 2\|u_+\|_{Y^s_{K_+}(I)}\|u_-\|_{Y^s_{K_-}(I)} 
         + \|u_-\|_{Y^s_{K_-}(I)}^2). 
} 
Taking $\de = r^2$ and $r=\min\{1,c\}/(4C)$, then we have 
\EQQS{
 \|\Phi_1(u_{\pm}, n_+,n_-)\|_{Y^s_{K_{\pm}}(I)} \le r, \qquad 
 \|\Phi_2(n_{\pm},u_+,u_-)\|_{\dot{Y}^s_{W_{\pm c}}(I)} \le r. 
} 
Hence, $(\Phi_1, \Phi_2)$ is a map from $B_r(Y^s_{K_{\pm}}([0,T]) \cross \dot{Y}^s_{W_{\pm c}}([0,T]))$ into itself.  
If we also assume $(v_{\pm0}, m_{\pm 0}) \in B_{\de}(H^s(\R^d)) \cross \dot{H}^s(\R^d))$ is radial and  
$(v_{\pm}, m_{\pm}) \in B_r(Y^s_{K_{\pm}}(I) \cross \dot{Y}^s_{W_{\pm c}}(I))$ is spherically symmetric,  
then by the same manner as the estimate for \eqref{con1} and \eqref{con3}, we have 
\EQQS{
 &\|\Phi_1(u_{\pm}, n_+, n_-)-\Phi_1(v_{\pm}, m_+,m_-)\|_{Y^s_{K_{\pm}}(I)} \\
 &\le (1/8)(\|u_+ - v_+\|_{Y^s_{K_+}(I)} + \|u_- - v_-\|_{Y^s_{K_-}(I)} \\
   &\qquad + \|n_+ - m_+\|_{\dot{Y}^s_{W_{+c}}(I)} + \|n_- - m_-\|_{\dot{Y}^s_{W_{-c}}(I)}), \\
 &\|\Phi_2 (n_{\pm},u_+,u-) - \Phi_2 (m_{\pm}, v_+, v_-)\|_{\dot{Y}^s_{W_{\pm c}}(I)} \\
  &\le (1/4)(\|u_+ - v_+\|_{Y^s_{K_+}(I)} + \|u_- - v_-\|_{Y^s_{K_-}(I)}).
}
Thus, $(\Phi_1, \Phi_2)$ is a contraction mapping on $B_r(Y^s_{K_{\pm}}([0,T]) \cross \dot{Y}^s_{W_{\pm c}}([0,T]))$.  
Hence, by the Banach fixed point theorem, we have a solution to \eqref{int-eq} in it.   
We assume that $(u_{\pm}(0), n_{\pm}(0)), (v_{\pm}(0), m_{\pm}(0))$ are both radial and $s=s_c=d/2-2$ with $d \ge 4$.  
Let $(u_{\pm}, n_{\pm}), (v_{\pm}, m_{\pm}) \in Y^s_{K_{\pm}}([0,T]) \cross \dot{Y}^s_{W_{\pm c}}([0,T])$ 
are two spherically symmetric solutions satisfying $(u_{\pm}(0), n_{\pm}(0))=(v_{\pm}(0), m_{\pm}(0))$.
Then by the same manner as the proof for non-radial initial data, 
the uniqueness of the solution $(u_{\pm},n_{\pm})$ is showed.  
$(ii)$ follows from the standard argument, so we omit the proof. 
\end{proof}

Finally, we prove Proposition \ref{main_prop2}. The proof is the same manner as the proof for Proposition 4.2 in ~\cite{KaT}. 
\begin{proof}
There exists $M > 0$ such that for all $0 < T < \I$, 
\EQQS{
 &\|u_{\pm}\|_{Y^s_{K_{\pm}}([0,T])} + \|n_{\pm c}\|_{\dot{Y}^s_{W_{\pm c}}([0,T])} < M, \\
 &\|u_{\pm}\|_{Y^s_{K_{\pm}}([-T,0])} + \|n_{\pm c}\|_{\dot{Y}^s_{W_{\pm c}}([-T,0])} < M 
} 
holds since $r$ in the proof of Proposition \ref{main_prop1} does not depend on $T$. 
Take $\{t_k\}_{k=0}^K \in \mathcal{Z}_0$ and $0< T < \I$ such that $-T < t_0, t_K < T$. 
By $L^2_x$ orthogonality, 
\EQQS{
 &\Bigl( \sum_{k=1}^K \| \LR{\na_x}^s \bigl( K_{\pm}(-t_k)u_{\pm}(t_k) - 
        K_{\pm}(-t_{k-1})u_{\pm}(t_{k-1})\bigr)\|_{L^2_x}^2\Bigr)^{1/2} \\
 &\lec \| \LR{\na_x}^s u_{\pm}\|_{V^2_{K_{\pm}}([0,T])} + \| \LR{\na_x}^s u_{\pm}\|_{V^2_{K_{\pm}}([-T,0])} \\
 &\lec \|u_{\pm}\|_{Y^s_{K_{\pm}}([0,T])} + \|u_{\pm}\|_{Y^s_{K_{\pm}}([-T,0])} \\
 &< 2M.   
} 
Thus, 
\EQQS{
 \sup_{\{t_k\}_{k=0}^K \in \mathcal{Z}_0} \Bigl( \sum_{k=1}^K \| \LR{\na_x}^s K_{\pm}(-t_k)u_{\pm}(t_k) - 
        \LR{\na_x}^s K_{\pm}(-t_{k-1})u_{\pm}(t_{k-1}) \|_{L^2_x}^2\Bigr)^{1/2} 
 < 2M.
}
Hence, there exists $f_{\pm} := \lim_{t \to \pm \I} \LR{\na_x}^s K_{\pm}(-t)u_{\pm}(t)$ in $L^2_x(\R^d)$. 
Then put $u_{\pm} := \LR{\na_x}^{-s}f_{\pm}$, we obtain 
\EQQS{
 \| \LR{\na_x}^s K_{\pm}(-t)u_{\pm}(t)-f_{\pm}\|_{L^2_x} 
  = \| u_{\pm}(t) - K_{\pm}(t)u_{\pm \I}\|_{H^s_x} 
  \to 0  
} 
as $t \to \pm \I$.
The scattering result for the wave equation is obtained similarly.  

\end{proof}

\end{document}